\numberwithin{equation}{section}
\newtheorem{prop}{Proposition}
\newtheorem{lemma}[prop]{Lemma}
\newtheorem{thm}[prop]{Theorem}
\newtheorem{cor}[prop]{Corollary}
\newtheorem{conj}[prop]{Conjecture}
\numberwithin{prop}{section}
\theoremstyle{definition}
\newtheorem{defn}[prop]{Definition}
\newtheorem{rmk}[prop]{Remark}
\newcommand{\del}{\partial}
\newcommand{\delb}{\bar{\partial}}\newcommand{\dt}{\frac{\partial}{\partial t}}
\newcommand{\brs}[1]{\left| #1 \right|}
\newcommand{\gG}{\Gamma}
\renewcommand{\gg}{\gamma}
\newcommand{\gD}{\Delta}
\newcommand{\gd}{\delta}
\newcommand{\gs}{\sigma}
\newcommand{\gU}{\Upsilon}
\newcommand{\gl}{\lambda}
\newcommand{\gw}{\omega}
\newcommand{\ga}{\alpha}
\newcommand{\gb}{\beta}
\renewcommand{\ge}{\epsilon}
\newcommand{\N}{\nabla}
\newcommand{\til}[1]{\widetilde{#1}}
\renewcommand{\bar}[1]{\overline{#1}}
\renewcommand{\i}{\sqrt{-1}}
\newcommand{\bg}{\bar{g}}
\newcommand{\bi}{\bar{i}}
\newcommand{\bj}{\bar{j}}
\newcommand{\bk}{\bar{k}}
\newcommand{\bl}{\bar{l}}
\newcommand{\bm}{\bar{m}}
\newcommand{\bn}{\bar{n}}
\newcommand{\bp}{\bar{p}}
\newcommand{\bq}{\bar{q}}
\newcommand{\bs}{\bar{s}}
\newcommand{\IP}[1]{\left<#1\right>}
\newcommand{\bga}{\bar{\alpha}}
\newcommand{\bgb}{\bar{\beta}}
\newcommand{\bgg}{\bar{\gamma}}
\newcommand{\bmu}{\bar{\mu}}
\newcommand{\bnu}{\bar{\nu}}
\newcommand{\bgs}{\bar{\sigma}}
\newcommand{\HH}{\mathcal{H}}
\DeclareMathOperator{\Rc}{Rc}
\DeclareMathOperator{\Rm}{Rm}
\DeclareMathOperator{\tr}{tr}
\DeclareMathOperator{\Id}{Id}
\DeclareMathOperator{\End}{End}
\DeclareMathOperator{\rank}{rank}
\begin{document}

\title[Pluriclosed flow on generalized K\"ahler manifolds with split tangent
bundle]{Pluriclosed flow on generalized K\"ahler manifolds with split tangent
bundle}

\begin{abstract} We show that the pluriclosed flow preserves generalized
K\"ahler structures with the extra condition $[J_+,J_-] = 0$, a condition
referred to as ``split tangent bundle.''  Moreover, we show
that in this case the flow reduces to a nonconvex fully nonlinear
parabolic flow of a scalar potential function.  We prove a number of a priori
estimates for this equation, including a general estimate in dimension $n=2$ of
Evans-Krylov type requiring a new argument due to the nonconvexity of the
equation.  The main result is a long time existence theorem for the flow
in dimension $n=2$, covering most cases.  We also show that the pluriclosed flow
represents the parabolic analogue to an elliptic problem which is a very natural
generalization of the Calabi conjecture to the setting of generalized K\"ahler
geometry with split tangent bundle.
\end{abstract}

\date{\today}

\author{Jeffrey Streets}

\address{Rowland Hall\\
         University of California\\
         Irvine, CA 92617}
\email{\href{mailto:jstreets@uci.edu}{jstreets@uci.edu}}

\thanks{The author was partly supported by the National Science
Foundation DMS-1301864 and an Alfred P. Sloan Fellowship.}

\maketitle

\section{Introduction}

A generalized K\"ahler structure on a compact manifold $M$ is a triple
$(g,J_+,J_-)$ of a Riemannian metric and two integrable complex structures
$J_{\pm}$ so that
\begin{align*}
d^c_+ \gw_+ =&\ - d^c_- \gw_-\\
d d^c_+ \gw_+ =&\ - d d^c_- \gw_- = 0.
\end{align*}
These equations first arose in \cite{GHR} through investigations of
supersymmetric
sigma models.  Later these equations were given a purely geometric context
through Hitchin's generalized geometric structures \cite{Gualtthesis},
\cite{HitchinGCY}
.  One interpretation of the
generalized K\"ahler condition is as a pair of ``pluriclosed" or ``strong
K\"ahler with torsion (SKT)" structures, satisfying further compatibility
conditions.  In \cite{ST1,ST2} the
author
and Tian introduced a parabolic flow of pluriclosed structures.  Later we
discovered that this flow preserves generalized K\"ahler structure, suitably
interpreted \cite{STGK}.  A crucial observation of \cite{STGK} is that in order
to
preserve generalized K\"ahler structure the complex structures $J_{\pm}$ must
themselves flow as well.  The construction of this flow will be reviewed in \S
\ref{gkflow}.

In this paper we analyze this flow in the special case that the generalized
K\"ahler structure satisfies the further condition $[J_+,J_-] = 0$.  The first
main result is that with this extra initial condition, the complex structures
$J_{\pm}$ remain fixed, and the flow moreover reduces to a flow of a scalar
potential satisfying a nonconvex fully nonlinear parabolic equation.

\begin{thm} \label{mainthm1} Let $(M^{2n}, g, J_{\pm})$ be a generalized
K\"ahler manifold satisfying $[J_+,J_-] = 0$.  Let $\gw_t$ denote the solution
to pluriclosed flow on $(M, J_+)$ with initial condition $\gw_0 = g(J_+
\cdot, \cdot)$.  Then $\gw_t$ is a generalized K\"ahler metric on $(M,
J_{\pm})$.  Moreover, in this setting the pluriclosed flow reduces to a fully
nonlinear parabolic flow of a scalar potential function $f$ (see \S
\ref{reduction}).
\end{thm}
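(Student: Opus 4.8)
The plan is to pass everything through the simultaneous eigenbundle decomposition forced by the commuting condition, and then to track how the generalized K\"ahler evolution of \cite{STGK} behaves under this splitting. First I would exploit $[J_+,J_-]=0$ directly: since $J_+$ and $J_-$ commute and square to $-\Id$, the endomorphism $Q := J_+ J_-$ is an involution, and its $\pm 1$ eigenbundles give a $g$-orthogonal, $J_{\pm}$-invariant splitting $TM = T_+ \oplus T_-$ on which $J_- = J_+$ and $J_- = -J_+$ respectively. Complexifying, $T^{1,0}_{J_+}$ splits into the $\pm\i$-eigenbundles $V_{\pm}$ of $J_-$, and integrability of $J_{\pm}$ shows these distributions are involutive, so locally there exist holomorphic coordinates for $J_+$ adapted to the splitting $V_+ \oplus V_-$.

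In these coordinates I would rewrite the generalized K\"ahler identities $d^c_+ \gw_+ = - d^c_- \gw_-$ and $d d^c_\pm \gw_\pm = 0$, recording the block structure of the metric and of the torsion three-form $H = d^c_+ \gw_+$. The expected upshot is that $\gw_+$ and $\gw_-$ differ only by a sign on the $T_-$-block, so both are governed by the same Hermitian data, and the matching identity becomes a compatibility constraint linking the $V_+$ and $V_-$ blocks.

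The crux is the second half of the first assertion. I would recall from \cite{STGK} the evolution equations for the full structure $(g,J_{\pm})$ under pluriclosed flow, in which a priori $J_-$ moves by a term assembled from $g$ and $H$ contracted against $J_-$. The key computation is to show that under the splitting this driving term vanishes identically, so that $\del_t J_{\pm} = 0$; concretely, the commuting condition forces the components of $H$ mixing $V_+$ and $V_-$ into a form that is annihilated when this endomorphism is built. Once $\del_t J_\pm=0$ is established, the metric $g_t$ recovered from the pluriclosed flow of $\gw_t$ on the fixed complex manifold $(M,J_+)$ stays compatible with the fixed $J_-$ and continues to satisfy the generalized K\"ahler identities. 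I expect this vanishing to be the main obstacle, since it requires the precise structural form of the $J$-evolution from \cite{STGK} together with careful bookkeeping of which torsion components survive the splitting.

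For the scalar reduction, since $J_+$ is fixed the pluriclosed flow stays within a fixed Aeppli class, so its general variation is carried by a $(1,0)$-form potential. Combining the block decomposition with the matching constraint from the first step, I would show that this form potential collapses to the data of a single scalar function $f$ governing the relative factor between the two blocks; this is exactly where the identity $d^c_+\gw_+ = -d^c_-\gw_-$ does the work of cutting the degrees of freedom down to one. Substituting $\gw_t = \gw_t(f)$ into $\dt \gw = -(\rho_B)^{1,1}$ and extracting the scalar part then yields the advertised fully nonlinear parabolic equation, whose right-hand side is a difference of logarithmic-determinant terms from the $T_+$ and $T_-$ blocks; the opposite signs of these two contributions are the source of the nonconvexity flagged in the introduction. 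The remaining bookkeeping is carried out in \S\ref{reduction}.
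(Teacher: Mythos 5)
Your setup (the involution $\Pi = J_+J_-$, the $g$-orthogonal splitting, adapted $J_+$-holomorphic coordinates, and the block structure of the metric and torsion) matches the paper's background section, and your description of the resulting scalar equation --- a difference of log-determinant terms over the $T_+$ and $T_-$ blocks, whose opposite signs cause the nonconvexity --- is accurate. But the step you yourself flag as the crux, namely showing that the $J_\pm$-evolution of \cite{STGK} has a ``driving term'' that vanishes under the splitting so that $\del_t J_\pm = 0$, is a genuine gap, and it is not how the paper argues. Two concrete problems. First, in \cite{STGK} the complex structures do not move by a pointwise algebraic term built from $g$ and $H$ that could be checked to vanish from the commuting condition: they move by pullback under the diffeomorphisms $(\phi_\pm)_t$ generated by the Lee vector fields $\theta_{\pm}^{\sharp}$ (equivalently, in the moving gauge, by the full parabolic equation $\dt J = \gD J + \Rm * J + DJ^{*2}$). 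For split generalized K\"ahler structures the Lee vector fields are genuinely not real-holomorphic --- already for the Tricerri metric (\ref{tric}) on the Inoue surface one checks that the $(1,0)$-part of $\theta_+^{\sharp}$ is proportional to $y\, \del/\del z$, which is not a holomorphic vector field --- so the individually pulled-back structures $(\phi_\pm)_t^* J_\pm$ \emph{do} move. What actually needs to be shown is that the comparison diffeomorphism $(\phi_-)_t \circ (\phi_+)_t^{-1}$ preserves $J_-$. Second, and worse, any such computation is circular: the torsion identities you would invoke (``which torsion components survive the splitting'') are consequences of $(g_t, J_+, J_-)$ being split generalized K\"ahler at time $t>0$, which is exactly the statement under proof. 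Your proposal gives no mechanism for propagating the split condition forward in time.

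The paper's proof avoids the $J$-evolution of \cite{STGK} entirely and runs in the opposite logical order from yours. It first proves the curvature identity $P(\gw) = \rho_+^+ - \rho_-^+ - \rho_+^- + \rho_-^-$ (Proposition \ref{curvid}), from which it deduces that $P$ is ``formally generalized K\"ahler'' (Corollary \ref{PGKcor}). It then \emph{constructs} the solution with $J_\pm$ held fixed: the ansatz $\gw_t^f = \gw_0 - t P(h_\pm) + \left( \gd_+ \gd_+^c - \gd_- \gd_-^c \right) f$, with $f$ solving (\ref{scalarflow}), is shown in Lemma \ref{reductionlemma} to solve pluriclosed flow; short-time existence comes from strict parabolicity of (\ref{scalarflow}); uniqueness of pluriclosed flow identifies this constructed family with $\gw_t$; and Corollary \ref{PGKcor} then yields the generalized K\"ahler property along the flow. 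In other words, the scalar reduction is not a corollary of knowing the complex structures stay fixed, as in your outline --- it is the engine that produces the compatible solution in the first place, and the fixity of $J_\pm$ falls out as a consequence. To repair your argument you would either have to reproduce this construction, or set up a separate well-posedness/uniqueness theory for the coupled metric-plus-$J$ system in which the split condition can be propagated; neither is sketched in the proposal.
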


\begin{rmk}  The evolution equation for $f$ is closely related to generalized
Calabi-Yau equations which have appeared in physics literature
(\cite{Buscher,HullSSC,HullGCY,HitchinGCY,RocekMCY}).  In a way, Theorem
\ref{mainthm1} treats the generalized K\"ahler-Ricci flow of \cite{STGK} in the
``most classical case" of generalized K\"ahler manifolds described in
\cite{GHR}.  Already in \cite{GHR} a local potential in the case $[J_+,J_-] = 0$
is described, which is essentially the one we use here.  In later works,
culminating in \cite{HullGCY}, a local potential for generalized K\"ahler
structures is shown to exist in full generality.  A fascinating direction for
future work is to reduce the generalized K\"ahler-Ricci flow of \cite{STGK} to a
flow of a potential function.
\end{rmk}

Theorem \ref{mainthm1} allows us to make a more tractable refinement of the
existence
conjecture for pluriclosed flow (\cite{ST2} Conjecture 5.2), where it is
suggested
that
the natural cohomological obstructions to the long time existence of the flow
are the
only obstructions (see Conjecture \ref{coneconj}).  Moreover, it also suggests a
very
natural analogue of Calabi's conjecture \cite{Calabi} in this setting, which
directly generalizes the classical conjecture in the K\"ahler setting, resolved
by Yau \cite{Yau}.  While we do not address this Calabi-type conjecture directly
here, it is clear that the a priori estimates we establish can be used in this
setting as well.  The second main theorem of this paper is to obtain the smooth
long
time existence of the pluriclosed flow on many generalized K\"ahler complex
surfaces, mostly confirming Conjecture \ref{coneconj}.

As it turns out, the evolution equation satisfied by the scalar potential
function of Theorem \ref{mainthm1} is a \emph{nonconvex} fully nonlinear
parabolic equation.  This means
that many of the well-developed techniques for understanding for instance the
parabolic Monge Ampere equation (arising from K\"ahler-Ricci flow) do not apply
directly to this equation.  One notable difficulty is that the delicate
techniques used in establishing the parabolic Evans-Krylov estimates
\cite{Evans,Krylov} no
longer apply, as they rely heavily on the convexity hypothesis.  Nonetheless we
are able to overcome these difficulties to provide a fairly complete picture in
the case $n=2$:

\begin{thm} \label{mainthm2} Let $(M^4, g_0, J_{\pm})$ be a generalized K\"ahler
surface satisfying $[J_+,J_-] = 0$ and $\rank_{J_{\pm}} = 1$.  Suppose $(M^4,
J_+)$ is biholomorphic to
one of:
\begin{enumerate}
\item A ruled surface over a curve of genus $g \geq 1$.
\item A bi-elliptic surface,
\item An elliptic fibration of Kodaira dimension $1$,
\item A compact complex surface of general type, whose universal cover is
biholomorphic to $\mathbb H \times \mathbb H$,
\item An Inoue surface of type $S_M$.
\end{enumerate}
In case (1) the maximal existence time of the flow is determined by
cohomological data associated to $g_0$ (see \S \ref{conesec}).  In all other
cases the solution to pluriclosed flow with initial condition $g_0$ exists on
$[0,\infty)$.
\end{thm}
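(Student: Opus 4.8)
The plan is to prove long-time existence by establishing a priori estimates for the scalar potential $f$ whose evolution is described in Theorem~\ref{mainthm1}, then combining these with the geometry of each surface class to control the behavior of the flow.
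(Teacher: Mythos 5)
There is a genuine gap: what you have written is a statement of strategy, not a proof. It names the two broad ingredients (a priori estimates for the potential $f$, plus surface-specific geometry) but supplies neither, and the entire difficulty of the theorem lies in making both precise. On the analytic side, the estimates do not follow from standard theory: the scalar equation (\ref{scalarflow}) is \emph{nonconvex}, so Evans--Krylov regularity and Calabi's third-order estimate are unavailable, and the paper must build a replacement (Theorem \ref{EKregularity}) via a blowup argument producing an ancient solution on $\mathbb{C}^2$, a Liouville-type rigidity statement for bounded ancient solutions of the heat equation on such a background (Proposition \ref{heatrigid}), and a rigidity theorem for the blowup limit (Theorem \ref{rigiditythm}). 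Moreover the estimates must be assembled in a specific order: a lower bound on $\det g_+$ from the maximum principle applied to Lemma \ref{halfdetev} (this is where the hypothesis $c_1(T^{\mathbb C}_\pm M)\le 0$ enters, via a background metric $h_+$ with $\rho(h_+)\le 0$); then the bound on $\dot f$ (Proposition \ref{fdotest}, special to $n=2$, $\rank=1$) to convert this into a lower bound on $g_-$; then the metric upper bound (Proposition \ref{mub}), which itself requires the ``torsion potential'' $\del_+\del_- f$ and its evolution equation (Lemma \ref{torsionpotentialev}) to absorb the bad torsion terms. Your proposal does not identify any of these steps, nor the fact that a metric lower bound must come \emph{before} the upper bound.

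On the geometric side, the proposal never says what property of the five surface classes is actually used. The paper isolates the condition $c_1(T^{\mathbb C}_+M)\le 0$ or $c_1(T^{\mathbb C}_-M)\le 0$ as the hypothesis of the main analytic result (Theorem \ref{halfample}), and then verifies it case by case using the Apostolov--Gualtieri/Beauville classification: universal covers $\mathbb{H}\times\mathbb{CP}^1$-type products for ruled surfaces, $\mathbb{C}\times\mathbb{C}$ for bi-elliptic, $\mathbb{C}\times\mathbb{H}$ for elliptic fibrations, $\mathbb{H}\times\mathbb{H}$ for general type, and an explicit computation with the Tricerri metric (\ref{tric}) for Inoue surfaces of type $S_M$. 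Without naming this Chern-class condition and checking it in each case, ``combining with the geometry of each surface class'' has no content; indeed this condition is exactly why ruled surfaces over $\mathbb{CP}^1$ and Hopf surfaces are excluded from the theorem. To turn your outline into a proof you would need to supply both the nonconvex regularity theory and the case-by-case curvature verification.
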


\begin{rmk} Generalized K\"ahler surfaces satisfying $[J_+,J_-] = 0$ with
$\rank_{J_{\pm}} = 1$ (see Definition \ref{rankdef}) were classified by
Apostolov-Gualtieri \cite{GaultApost}, building on work of Beauville
\cite{Bville}.  The only cases we are not able to treat are ruled surfaces over
$\mathbb {CP}^1$ and the Hopf surfaces.  The reason for this is the lack of a
certain a priori estimate which requires certain conditions on a background
metric to be satisfied (see the proof in \S \ref{mainsec}).  It seems likely
that the flow always exists smoothly for the natural conjectural existence time
(see Conjecture \ref{coneconj}), and indeed many of our estimates, including the
crucial Harnack estimate, apply in full generality.
\end{rmk}

\begin{rmk} In the cases $\rank_{J_{\pm}} \in \{0,2\}$, it follows that in fact
$J_{\pm} = \pm J_-$, and generalized K\"ahler metrics are automatically
K\"ahler.  Thus solutions to pluriclosed flow in this setting are solutions
to K\"ahler-Ricci flow, where for instance the sharp long time existence result
is already known in \cite{TianZhang}.
\end{rmk}

In the course of the proof of Theorem \ref{mainthm2} we establish a number of a
priori estimates.  First we establish a general $L^{\infty}$ estimate for the
potential
function along the flow (Lemma \ref{fbound}) in any dimension.  By exploiting
special structure in the case $n=2$ we establish a uniform bound on the time
derivative of the potential function as well (Proposition \ref{fdotest}).  One
crucial step is to establish an upper bound for the metric in the presence of a
lower bound.  This requires the introduction of a kind of ``torsion potential"
which has a miraculously simple evolution equation (Lemma
\ref{torsionpotentialev}), and can be used to control the difficult torsion
terms appearing in various evolution equations.  This torsion potential has
applications to understanding pluriclosed flow in full generality, and will be
expounded upon in future work.  The final key estimate is an
Evans-Krylov type estimate for the pluriclosed flow in this setting for
dimension $n=2$ (cf. Theorem \ref{EKregularity}).

Here is an outline of the rest of this paper: in \S \ref{bckgrnd} we recall the
relationship of pluriclosed flow and generalized K\"ahler geometry, and also
give some background on generalized K\"ahler structures with $[J_+,J_-] = 0$. 
In \S \ref{pcfgkc} we establish that the pluriclosed flow preserves the
generalized
K\"ahler condition in the ``natural gauge,"  and give the proof of Theorem
\ref{mainthm1}.  We also motivate a sharp local existence conjecture
specializing (\cite{ST2} Conjecture 5.2) to this special case.  In \S \ref{eveq}
we record a number of evolution equations along the pluriclosed flow special to
this setting.  Next in \S \ref{estimates} we obtain a number of a priori
estimates on the potential function, its derivatives, and the metric tensor. 
Section \ref{harnack} has the proof of Evans-Krylov type regularity for our
equation in dimension $n=2$.  We bring these estimates together to prove Theorem
\ref{mainthm2} in \S \ref{mainsec}.
\vskip 0.1in

\noindent \textbf{Acknowledgements}: The author would like to thank Vestislav
Apostolov, Marco
Gualtieri and Jess Boling for helpful conversations on the results of this
paper.  Also, in 2012 after \cite{STGK} was written, the author had a discussion
with G. Tian, M. Rocek and C. Hull during which it was noted that the results of
\cite{GHR, HullGCY,Lind} suggested that the generalized K\"ahler-Ricci flow of
\cite{STGK} should reduce to a potential function.  Based on this the author and
Tian made preliminary investigations in this direction.  The author would like
to thank all three for these inspirational discussions.  The author also thanks
an anonymous referee for helpful comments.

\section{Background} \label{bckgrnd}
\subsection{Pluriclosed flow} \label{PCFbck}

Let $(M^{2n}, J)$ be a complex manifold.  Suppose $g$ is a Riemannian metric
compatible with $J$, with K\"ahler form $\gw = g(J \cdot, \cdot)$.  We say that
the metric $g$ is \emph{pluriclosed} if
\begin{align*}
 \del \delb \gw = 0.
\end{align*}
In \cite{ST1} the author and Tian defined a parabolic flow of Hermitian metrics
which preserves the pluriclosed condition.  To define it, let
\begin{align} \label{Pdef}
 P(\gw) := - \del \del^*_{\gw} \gw - \delb \delb^*_{\gw} \gw - \frac{\i}{2} \del
\delb \log \det g,
\end{align}
where the last term stands in for the representative of the first Chern
class arising from the Chern connection.  The pluriclosed flow is then the
evolution equation
\begin{align} \label{PCF}
 \dt \gw = - P(\gw).
\end{align}
One can phrase this flow equation using the curvature of the Chern connection. 
In particular, let $T$ denote the torsion of the Chern connection, and let $S =
\tr_{\gw} \Omega$ denote the curvature endomorphism of the Chern connection. 
Lastly, set
\begin{align} \label{Qdef}
 Q_{i \bj} =&\ g^{\bl k} g^{\bn m} T_{i k \bn} T_{\bj \bl m}.
\end{align}
It follows (\cite{ST1} Proposition 3.3) that the associated Hermitian metrics
$g_t$ satisfy
\begin{align*}
 \dt g =&\ - S + Q.
\end{align*}
We also here define the normalized flow.  We say that a one-parameter family of
pluriclosed metrics $g_t$ satisfies the \emph{normalized pluriclosed flow} if
\begin{align} \label{normflow}
 \dt \gw =&\ - P(\gw) - \gw.
\end{align}
In some applications of Ricci flow the ``normalized flow'' means to fix the
volume to be constant along the flow.  The normalization of (\ref{normflow})
generally does not preserve the volume.  However, for various applications in
K\"ahler Ricci flow the scaling choice as in (\ref{normflow}) is useful, and is
the one most useful to us here.

\subsection{Pluriclosed flow and generalized K\"ahler geometry} \label{gkflow}

We briefly recall the construction of solutions to the generalized Kahler Ricci
flow.  Let $(M^{2n}, g, J_{\pm})$ be a generalized Kahler structure.  Let
$\gw_{\pm}$ denote the associated K\"ahler forms, both of which are pluriclosed.
 Let $\gw_{\pm}(t)$ denote the solution to pluriclosed flow with these initial
conditions.  By \cite{ST2} we know that the associated metrics $g_{\pm}(t)$ and
three-forms
$H_{\pm}(t) = d^c_{\pm} \gw_{\pm}$ are solutions to the (gauge-modified)
\emph{$B$-field flow:}
\begin{gather} \label{Bflow}
 \begin{split}
 \dt g_{\pm} =&\ - 2 \Rc + \frac{1}{2} \HH + L_{\theta_{\pm}^{\sharp}} g,\\
 \dt H_{\pm} =&\ \gD_d H + L_{\theta_{\pm}^{\sharp}} H,
 \end{split}
\end{gather}
where $\HH_{ij} = H_{ipq} H_j^{pq}$.  Now let $(\phi_{\pm})_t$
denote the one-parameter families of diffeomorphisms
generated by $\theta_{\pm}^{\sharp}$, respectively.  It follows that
$\left(\phi_{\pm}\right)_t^* (g_{\pm})_t,
\left(\phi_{\pm}\right)_t^* H_{\pm}(t)$ are both solutions to the $B$-field
flow.  Moreover, one observes that $(\left(\phi_{-}\right)_t^* (g_{-})_t,
- \left(\phi_{-}\right)_t^* H_{-}(t))$ is ALSO a solution to the $B$-field flow.
 Since $g_+(0) = g_-(0)$ and $H_+(0) = - H_-(0)$, by
uniqueness of solutions to (\ref{Bflow}) one has $(g_+(t), H_+(t)) = (g_-(t), -
H_-(t))$ for all
$t$ such that the flows exist.  Indeed, it furthermore follows that $g$ is
compatible with $(J_{\pm})_t := (\phi_{\pm})_t^* J_{\pm}$, and is generalized
K\"ahler with respect to these two complex structures.  These complex structures
$(J_{\pm})_t$ are moving via diffeomorphism, but when the evolution equation is
expressed with respect to the ``moving gauge,'' i.e. in terms of the pullback
data, one observes that they in fact satisfy a PDE of the form
\begin{align*}
 \dt J_{\pm} = \gD J + \Rm * J + DJ^{*2},
\end{align*}
where one can consult (\cite{STGK} Proposition 3.1) for the precise formula.

The presence of evolving complex structures makes this flow of generalized
K\"ahler structures perhaps a bit intimidating.  One purpose of Theorem
\ref{mainthm1} is to
show that if the initial generalized K\"ahler structure satisfies $[J_+,J_-] =
0$, then we in fact have that pluriclosed flow remains compatible with these
given $J_{\pm}$, and thus we do not need to consider the diffeomorphism modified
flows, and hence we can treat $J_{\pm}$ as fixed.

\subsection{Generalized K\"ahler manifolds with commuting complex structures}

In this subsection we recall some fundamental structural results concerning
generalized K\"ahler manifolds with commuting complex structures.  Most of our
discussion here is adopted from \cite{GaultApost}, which the reader can consult
for further information.

Let $(M^{2n}, g, J_{\pm})$ be a generalized K\"ahler manifold satisfying the
further condition
\begin{align} \label{Jcomm}
[J_+,J_-] = 0.
\end{align}
Let
\begin{align} \label{pidef}
\Pi := J_+ J_- \in \End(TM).
\end{align}
Using (\ref{Jcomm}), one easily shows that $\Pi^2 = \Id$.  It
follows that the eigenvalues of $\Pi$ are $\pm 1$, and the corresponding
eigenspace
decomposition yields a splitting of the tangent bundle
\begin{align*}
TM = T_+ M \oplus T_- M.
\end{align*}
This is the source of the terminology, ``generalized K\"ahler manifolds with
split tangent bundle.''  More precisely, one can define
\begin{align} \label{decomp}
 T_{\pm} M = \ker (\Pi \mp I) = \ker (J_+ \pm J_-).
\end{align}
We observe here an important general convention, which is that from this point
on many objects will be labeled with $\pm$, and this will \emph{almost always}
refer to the splitting above, and NOT to the usage of distinct complex
structures $J_{\pm}$.  In \S \ref{gkflow} we referred to the operators $d^c_+$,
which do in fact refer to $J_{\pm}$, and these will arise a few more times
below.  With this one exception, the notation $\pm$ signifies the decomposition
(\ref{decomp}), and in any case there is never any actual overlap of notation,
only a potential for confusion which it seems we are stuck with by existing
literature.

In particular, associated to (\ref{decomp}) one has an associated decomposition
$d = \gd_+ + \gd_-$
induced by $T^*M = T_+^* M \oplus T_-^*M$.  This splitting further refines the
decomposition of differential forms into types with respect to $J_+$), and
hence defines a decomposition
\begin{align*}
d = \gd_+ + \gd_- =  \del_+ + \delb_+ + \del_- + \delb_-
\end{align*}
Also, we can decompose the operator $\del\delb : C^{\infty} \to \Lambda^{1,1}$
into types, with the only two relevant pieces being
\begin{align*}
 \gd_{\pm} \gd^c_{\pm} = \pi_{\pm} \i \del \delb,
\end{align*}
where $\pi_{\pm}$ denotes the projections onto $\Lambda^{1,0}_{\pm} \wedge
\Lambda^{0,1}_{\pm}$, respectively.
Next we
record a theorem showing
that generalized K\"ahler metrics have a very
nice structure with respect to this splitting.
\begin{thm} (\cite{GaultApost} Theorem 4) \label{splitmetrics} Let $(M,g, J_+,
J_-)$ be a generalized
K\"ahler manifold with $[J_+, J_-] = 0$.  Let $\Pi = J_+ J_-$.  Then the $\pm
1$-eigenspaces of $\Pi$ are $g$-orthogonal $J_{\pm}$-holomorphic foliations on
whose leaves $g$ restricts to a K\"ahler metric. 
\end{thm}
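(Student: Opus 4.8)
The plan is to prove Theorem \ref{splitmetrics} by decomposing the generalized Kähler conditions with respect to the splitting $TM = T_+M \oplus T_-M$ and extracting the integrability, orthogonality, and Kähler statements one at a time. First I would establish the orthogonality of $T_+M$ and $T_-M$. Since $\Pi = J_+J_-$ satisfies $\Pi^2 = \Id$ and both $J_{\pm}$ are $g$-orthogonal transformations, one checks directly that $\Pi$ is $g$-symmetric: for $X, Y$ we have $g(\Pi X, Y) = g(J_+ J_- X, Y) = g(J_- X, -J_+ Y) = g(X, -J_- J_+ Y) = g(X, \Pi Y)$, using antisymmetry of each $J_{\pm}$ with respect to $g$ together with $[J_+,J_-]=0$. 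A symmetric involution has $g$-orthogonal eigenspaces, which gives the orthogonal decomposition. At this point I would also note that each $J_{\pm}$ preserves the splitting: since $J_+$ commutes with $\Pi$, it maps each eigenspace to itself, and likewise for $J_-$; moreover on $T_+M$ one has $J_+ = J_-$ while on $T_-M$ one has $J_+ = -J_-$, directly from the definition $T_\pm M = \ker(J_+ \mp J_-)$.

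Next I would address integrability of the distributions $T_{\pm}M$, i.e. that they define foliations. The cleanest route is to use the integrability of $J_+$ and $J_-$ jointly. Because $J_\pm$ each preserve $T_+M$ and $T_-M$, the complexified distribution $T_+M \otimes \mathbb C$ decomposes into $(1,0)$ and $(0,1)$ parts with respect to $J_+$ (equivalently $J_-$, since they agree there). I would show involutivity of $T_+M \otimes \mathbb C$ by writing it as an intersection of the form $T^{1,0}_{J_+}M \cap T^{1,0}_{J_-}M$ plus its conjugate (suitably interpreted via the $\pm$ eigenspaces of $\Pi$), each factor being involutive by the Newlander-Nirenberg integrability of the corresponding complex structure, and intersections of involutive complex distributions that are $J$-invariant remain involutive. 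This yields that $T_\pm M$ are integrable, and that the leaves are $J_\pm$-holomorphic submanifolds. The holomorphicity is immediate once integrability is known, since $J_\pm$ restricts to an integrable complex structure on each leaf.

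The remaining and most substantive point is that $g$ restricts to a Kähler metric on each leaf. Here I would use the pluriclosed/generalized Kähler equations $d^c_+\gw_+ = -d^c_-\gw_-$ together with the type decomposition $d = \del_+ + \delb_+ + \del_- + \delb_-$ recorded above. The Kähler condition on a leaf $L$ of $T_+M$ amounts to $d(\gw_+|_L) = 0$, which in terms of the refined operators is controlled by the $\gd_+$-component of $d\gw_+$. The strategy is to show that the mixed-type torsion terms vanish on the eigenspaces: concretely, the generalized Kähler compatibility forces the restriction of the torsion three-form $H = d^c_+\gw_+$ to have no component purely tangent to a single leaf, so that $\gd_+(\gw_+|_{T_+}) = 0$ and hence $\gw_+$ restricts to a closed (hence Kähler) form on each leaf of $T_+M$, with the symmetric statement for $T_-M$.

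I expect this last step to be the main obstacle, since it requires carefully tracking how the torsion three-form $H$ splits under the finer decomposition into $T_\pm^*M$ types and showing that its ``leafwise'' components are killed by the generalized Kähler identities $d^c_+\gw_+ = -d^c_-\gw_-$ and $dd^c_+\gw_+ = 0$. The bookkeeping of types with respect to two complex structures simultaneously is the delicate part; the orthogonality and integrability claims, by contrast, are essentially formal consequences of $[J_+,J_-]=0$ and the integrability of $J_{\pm}$. I would organize the computation by fixing adapted local coordinates in which $T_{\pm}M$ are spanned by coordinate directions, reducing the leafwise closedness of $\gw_{\pm}$ to the vanishing of specific components of $H$, and then invoke Theorem 4 of \cite{GaultApost} structurally to confirm the identifications. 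Since this is precisely the cited result, I would ultimately defer the detailed torsion bookkeeping to the decomposition already developed in \cite{GaultApost}.
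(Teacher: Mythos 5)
First, a point of comparison: the paper does not prove this statement at all --- it is quoted as background, with the proof deferred entirely to \cite{GaultApost} (Theorem 4). So your proposal is measured against what a correct proof actually requires, not against an argument in the text. Your first step (orthogonality) is fine: $\Pi$ is a $g$-symmetric involution, hence has $g$-orthogonal eigenbundles, and $J_{\pm}$ preserve them. (Two harmless slips: the middle equality should read $g(J_-X,-J_+Y)=g(X,J_-J_+Y)$, not $g(X,-J_-J_+Y)$, and with the paper's convention $T_{\pm}M=\ker(J_+\pm J_-)$ one has $J_+=-J_-$ on $T_+M$, the opposite of what you wrote.)

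The genuine gap is the integrability step. You argue that $T_{\pm}M\otimes\mathbb{C}$ is a direct sum of $T^{1,0}_{J_+}M\cap T^{0,1}_{J_-}M$ (respectively $T^{1,0}_{J_+}M\cap T^{1,0}_{J_-}M$) and its conjugate, each summand is an intersection of involutive distributions and hence involutive, and therefore $T_{\pm}M$ is integrable. That last inference is invalid: involutivity of each summand controls only brackets of sections of like type, while involutivity of the \emph{sum} requires control of the mixed brackets $[Z,\bar{W}]$, about which Newlander--Nirenberg for $J_+$ and $J_-$ says nothing. Worse, the conclusion is actually false at the level of generality you invoke, so the gap cannot be patched formally. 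On $\mathbb{C}^2$ with the standard $J_+$, set $Z_1=\del/\del z_1+\bar{z}_1\,\del/\del z_2$ and $Z_2=\del/\del z_2$, and let $J_-$ be the unique endomorphism commuting with $J_+$ that acts by $\sqrt{-1}$ on $\mathbb{C}Z_1\oplus\mathbb{C}\bar{Z}_2$ and by $-\sqrt{-1}$ on the conjugate bundle. Then $J_-$ is an integrable complex structure (indeed $[Z_1,\bar{Z}_2]=0$) commuting with $J_+$, yet the $(-1)$-eigenbundle of $\Pi=J_+J_-$, spanned by $Z_1$ and $\bar{Z}_1$, is not involutive: $[Z_1,\bar{Z}_1]=\del/\del\bar{z}_2-\del/\del z_2$, which has no $\del/\del z_1$ or $\del/\del\bar{z}_1$ component and so does not lie in the span of $Z_1,\bar{Z}_1$. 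Since your step two never uses the metric, it would ``prove'' integrability in this example, where it fails.

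The moral is that integrability of the eigendistributions is \emph{not} a formal consequence of $[J_+,J_-]=0$ plus integrability of $J_{\pm}$; it requires the generalized K\"ahler torsion identities $d^c_+\gw_+=-d^c_-\gw_-$ and $dd^c_+\gw_+=0$ just as much as the leafwise K\"ahler property does, and this is exactly how Apostolov--Gualtieri prove it. Your third step (K\"ahler on leaves) correctly identifies that the GK equations must be used and honestly defers the torsion bookkeeping to the citation --- which is what the paper does wholesale --- but presenting integrability as ``essentially formal'' is a substantive error, not a presentational one.
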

Lastly, let us record a useful definition.
\begin{defn} \label{rankdef} Let $(M^{2n}, g, J_{\pm})$ be a generalized
K\"ahler manifold with
$[J_+,J_-] = 0$.  The \emph{rank} of $(J_+,J_-)$ is 
\begin{align*}
\rank_{J_{\pm}} := \dim_{\mathbb C}  T^{\mathbb C}_+ M.
\end{align*}
\end{defn}
\noindent Observe that if $\rank_{J_{\pm}} \in \{0,n\}$, then $J_+ = \pm J_-$,
and the
structure is K\"ahler.

\subsection{Adapted local coordinates}

In this subsection we describe local coordinates which are adapted to the local
product decomposition $TM = T_+ M \oplus T_- M$ described above.  This local
description of generalized K\"ahler structures with $[J_+,J_-] = 0$ was first
discovered in physical investigations into supersymmetry equations \cite{GHR},
where it is frequently referred to as ``bihermitian local product geometry.''   

\begin{lemma} \label{coordsplit} Let $(M^{2n}, g, J_{\pm})$ be a generalized
K\"ahler structure
satisfying $[J_+,J_-] = 0$.  Given $p \in M$ there exist local complex
coordinates adapted to the transverse foliations $T_{\pm}$.  That is, there is a
neighborhood $U = U_1 \times U_2 \subset \mathbb C^{n_1} \times \mathbb C^{n_2}$
such that $T_- U = T U_1, T_+ U = T U_2$.
\begin{proof} This follows from Theorem 2.1, see (\cite{GaultApost} \S 3). 
\end{proof}
\end{lemma}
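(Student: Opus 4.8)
The plan is to establish Lemma \ref{coordsplit} by reducing it to the integrability of the two transverse distributions $T_\pm M$ together with their compatibility with the complex structure $J_+$, and then invoking the holomorphic Frobenius theorem. By Theorem \ref{splitmetrics} the $\pm 1$-eigenspaces of $\Pi = J_+ J_-$ are $J_\pm$-holomorphic foliations; in particular $T_+ M$ and $T_- M$ are each integrable subbundles of $TM$ that are preserved by $J_+$ (since $[J_+,J_-]=0$ implies $J_+$ commutes with $\Pi$, hence $J_+$ preserves each eigenspace). Thus each of $T_\pm M$ defines a holomorphic foliation of the complex manifold $(M,J_+)$ in the sense that the leaves are complex submanifolds and the distribution is a $J_+$-holomorphic subbundle of $T^{1,0}M$.

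First I would fix $p \in M$ and work near $p$. The key structural input is that we have two complementary, integrable, $J_+$-invariant distributions $T_+ M$ and $T_- M$ with $T M = T_+ M \oplus T_- M$. Passing to the $(1,0)$-part with respect to $J_+$, this gives a splitting $T^{1,0}M = (T_+ M)^{1,0} \oplus (T_- M)^{1,0}$ into two involutive holomorphic subbundles. The holomorphic Frobenius theorem then guarantees local holomorphic coordinates $(z^1,\dots,z^{n_1},w^1,\dots,w^{n_2})$ on a polydisc neighborhood $U$ in which the leaves of one foliation are the slices $\{w = \mathrm{const}\}$ and the leaves of the other are $\{z = \mathrm{const}\}$. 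Concretely, one first straightens out the foliation tangent to $(T_- M)^{1,0}$ using the holomorphic Frobenius theorem to obtain coordinates in which its leaves are coordinate slices; the complementary integrable distribution $(T_+ M)^{1,0}$ then automatically straightens because two transverse holomorphic foliations whose sum is the full tangent bundle can be simultaneously rectified. This yields the product neighborhood $U = U_1 \times U_2 \subset \mathbb{C}^{n_1} \times \mathbb{C}^{n_2}$ with $T_- U = T U_1$ and $T_+ U = T U_2$, after relabeling so that the $z$-coordinates parametrize $U_1$ and the $w$-coordinates parametrize $U_2$.

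The main obstacle, and the point requiring care, is verifying that both distributions are genuinely \emph{holomorphic} (not merely smooth) foliations with respect to $J_+$, so that the holomorphic — rather than just the real smooth — Frobenius theorem applies; this is precisely what Theorem \ref{splitmetrics} supplies, since it asserts the eigenspaces are $J_\pm$-holomorphic foliations. A second subtlety is the simultaneous rectification: while the real smooth Frobenius theorem straightens a single foliation, producing a product chart adapted to \emph{both} transverse foliations at once uses the fact that the two distributions are complementary and jointly integrable, so one can choose the coordinate functions of the two blocks to be constant along the respective complementary leaves. Since the cited reference \cite{GaultApost} (via its Theorem 2.1 and the discussion of \S 3) carries out exactly this construction in the generalized Kähler setting, the proof reduces to this citation, and the content of the lemma is to record that these adapted coordinates exist. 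I would therefore present the argument as an application of Theorem \ref{splitmetrics} and the holomorphic Frobenius theorem, deferring the detailed local computation to \cite{GaultApost}.
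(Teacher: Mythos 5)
Your proposal is correct and takes essentially the same approach as the paper: the paper's own proof is a one-line citation of Theorem \ref{splitmetrics} together with (\cite{GaultApost} \S 3), and your argument---extracting $J_+$-invariance and integrability of $T_{\pm}M$ from that theorem and then simultaneously rectifying the two complementary holomorphic foliations via the holomorphic Frobenius theorem---is precisely the content that the citation defers to the reference. The subtlety you flag (that the foliations must be holomorphic, not merely smooth foliations by complex submanifolds, for holomorphic Frobenius to apply) is indeed the essential input supplied by Theorem \ref{splitmetrics}.
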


As the complex structures preserve
the eigenspaces of $\Pi$, in the coordinates of Lemma \ref{coordsplit} the
complex
structures take the form
\begin{align} \label{Jaction}
J_{\pm} = \left( \begin{matrix}
               \pm J_m & \\
               0 & J_{n-m}
              \end{matrix} \right),
\end{align}
where
\begin{align*}
J_{1} := \left( \begin{matrix}
               0 & -1 \\
               1 & 0
              \end{matrix} \right),
\qquad J_{k} := \left(
\begin{matrix}
J_{1} & &\\
& \ddots & \\
& & J_{1}
\end{matrix} \right).
\end{align*}
These complex coordinates $\{z_i\}$ are thus only naturally oriented for the
complex structure $J_+$.  All
of our computations in local complex coordinates are on the complex manifold
$(M, J_+)$.
\begin{rmk} As noted above, when performing local coordinate calculations with
these coordinates, dummy
indices which are decorated with a $\pm$ will refer to only to summations over
vectors spanning $T^{\mathbb C}_{\pm} M$, respectively.  More generally, a
quantity like $g_{\ga_+ \bgb_+}$ means the metric evaluated at two vectors
$\frac{\del}{\del z^{\ga}}, \frac{\del}{\del \bar{z}^{\gb}} \in T^{\mathbb
C}_{\pm}M$.
\end{rmk}
With this decomposition in hand we can write a local coordinate expression for
the operators $\gd_{\pm} \gd^c_{\pm}$.  In particular, one has
\begin{gather}
\begin{split}
\gd_{\pm} \gd^c_{\pm} f =&\ f_{,\ga_{\pm} \bgb_{\pm}} \i dz_{\ga_{\pm}} \wedge d
\bar{z}_{\gb_{\pm}}.
\end{split}
\end{gather}
To finish, we observe how the structure of a generalized K\"ahler metric is
reflected in these coordinates.  In particular, since the decomposition $TM =
T_+ M \oplus T_- M$ is orthogonal for any generalized K\"ahler metric, it
follows that, locally,
\begin{align} \label{metricsplit}
g_{\ga_+ \bgb_-} = 0.
\end{align}
Also, the partial K\"ahler condition of Theorem \ref{splitmetrics} implies that
in these coordinates one has, locally,
 \begin{gather} \label{partialK}
  \begin{split}
  g_{\ga_+ \bgb_+,\mu_+} - g_{\mu_+ \bgb_+, \ga_+} =&\ g_{\ga_+ \bgb_+, \bmu_+}
- g_{\ga_+ \bmu_+,\bgb_+} = 0\\
     g_{\ga_- \bgb_-,\mu_-} - g_{\mu_- \bgb_-, \ga_-} =&\ g_{\ga_- \bgb_-,
\bmu_-}
- g_{\ga_- \bmu_-,\bgb_-} = 0\\
  \end{split}
 \end{gather}
Together (\ref{metricsplit}) and (\ref{partialK}) imply that the only
nonvanishing components of the torsion are
\begin{align} \label{torsionform}
T_{\ga_+ \gb_- \bgg_-} =&\ g_{\gb_- \bgg_-,\ga_+}, \qquad T_{\ga_- \gb_+ \bgg_+}
= g_{\gb_+ \bgg_+,\ga_-}
\end{align}
Finally, the pluriclosed condition combined with (\ref{metricsplit}) implies the
useful coordinate identity
\begin{align} \label{pccoord}
g_{\ga_+ \bgb_+,\mu_- \bnu_-} + g_{\mu_- \bnu_-,\ga_+ \bgb_+} = 0.
\end{align}

\section{Pluriclosed flow and commuting generalized K\"ahler geometry}
\label{pcfgkc}

In this section we establish that pluriclosed flow on generalized K\"ahler
manifolds with split tangent bundle preserves the generalized K\"ahler
condition, without the gauge fixing issue described in \S \ref{gkflow}.  The
first step is to establish a curvature identity in this setting which expresses 
$P$ in terms
of the first Chern classes of $T^{\mathbb C}_{\pm}
M$.  We can then use this to reduce the pluriclosed flow to a potential function
flow in \S \ref{reduction}, and motivate our Calabi-type conjecture in \S
\ref{ccsec}.

\subsection{A curvature identity}

To begin, note from Theorem \ref{splitmetrics} that $T^{\mathbb C}_{\pm} M$
are indeed holomorphic
vector bundles over $M$.  Moreover, as these are subbundles of $T^{\mathbb C}
M$, by restriction, any generalized K\"ahler metric $g$ induces Hermitian
metrics on these two bundles.  In the adapted local coordinates the local
coefficients of these two metrics are $g_{\ga_{\pm} \bgb_{\pm}}$.  These
metrics of course have associated curvature tensors, and in particular given a
generalized K\"ahler metric we will refer to the first Chern class
representatives for the induced bundle metrics as $\rho_{\pm}$.  By the well
known transgression formula for the first Chern class of a complex line bundle,
we have the local expression
\begin{align} \label{trans}
\left(\rho_{\pm}\right)_{i \bj} =&\ - \i \del_i \delb_j \log \det g_{\pm},
\end{align}
where we have added a factor of $2$ for notational convenience.  Furthermore,
exploiting the decomposition of $\Lambda^{1,1}_{\mathbb R}$ into
types via $T^{\mathbb C} M = T^{\mathbb C}_+ M \oplus T^{\mathbb C}_- M$, we can
consider the corresponding decomposition of $\rho_{\pm}$.  We will only have
need for the pieces of the form $\Lambda^{1,0}_{\pm} \wedge
\Lambda^{0,1}_{\pm}$, and we will denote the corresponding pieces of
$\rho_{\pm}$ decomposition by a $\pm$ superscript.  For concreteness sake, in
the adapted local coordinates one has the expressions
\begin{align*}
\left(\rho^{+}_+ \right)_{\ga_+ \bgb_+} =&\ - g^{\bgg_+ \gd_+} g_{\gd_+
\bgg_+,\ga_+ \bgb_+}  + g^{\bgg_+ \mu_+} g^{\bnu_+ \gd_+} g_{\mu_+
\bnu_+,\bgb_+} g_{\gd_+ \bgg_+,\ga_+}\\
\left(\rho^{-}_+ \right)_{\ga_- \bgb_-} =&\ - g^{\bgg_+ \gd_+} g_{\gd_+
\bgg_+,\ga_- \bgb_-}  + g^{\bgg_+ \mu_+} g^{\bnu_+ \gd_+} g_{\mu_+
\bnu_+,\bgb_-} g_{\gd_+ \bgg_+,\ga_-}\\
\left(\rho^{+}_- \right)_{\ga_+ \bgb_+} =&\ - g^{\bgg_- \gd_-} g_{\gd_-
\bgg_-,\ga_+ \bgb_+}  + g^{\bgg_- \mu_-} g^{\bnu_- \gd_-} g_{\mu_-
\bnu_-,\bgb_+} g_{\gd_- \bgg_-,\ga_+}\\
\left(\rho^{-}_- \right)_{\ga_- \bgb_-} =&\ - g^{\bgg_- \gd_-} g_{\gd_-
\bgg_-,\ga_- \bgb_-}  + g^{\bgg_- \mu_-} g^{\bnu_- \gd_-} g_{\mu_-
\bnu_-,\bgb_-} g_{\gd_- \bgg_-,\ga_-}
\end{align*}

With this background in place we give our calculation of $P$ in this special
setting.  To begin we recall a general coordinate formula for $P$.

\begin{lemma} (\cite{ST1}) Let $(M^{2n}, g, J)$ be a complex manifold with
pluriclosed metric $g$.  In local complex coordinates, one has
\begin{align*}
P(\gw)_{i \bj} =&\ - \i \left[ g^{\bq p} g_{i \bj, p\bq} + g^{\bq r} g^{\bs p}
\left( g_{r\bs,\bj} g_{p\bq,i} - g_{r\bs,\bj} g_{i\bq,p} - g_{r\bs,i}
g_{p\bj,\bq} \right) \right].
\end{align*}
\end{lemma}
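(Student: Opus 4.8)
The plan is to compute each of the three terms in the definition
\[
P(\gw) = - \del \del^*_{\gw} \gw - \delb \delb^*_{\gw} \gw - \frac{\i}{2} \del \delb \log \det g
\]
separately in adapted local complex coordinates, with the convention $\gw = \i g_{i \bj}\, dz^i \wedge d \bar{z}^j$, and then add them. Since $\gw$ is a real $(1,1)$-form, conjugation interchanges the first two terms: one has $\del^*_{\gw} \gw = \overline{\delb^*_{\gw} \gw}$, so it suffices to compute the single $(1,0)$-form $\delb^*_{\gw} \gw$ and read off $\del^*_{\gw} \gw$ by taking the complex conjugate. This halves the bookkeeping and guarantees at the end that $P(\gw)$ comes out a real $(1,1)$-form.

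First I would derive a coordinate formula for the Hermitian adjoint $\delb^*_{\gw}$ on $(1,1)$-forms. The cleanest route is to express $\delb^*_{\gw}$ as a metric trace of the Chern covariant derivative (whose $(0,1)$-part is $\delb$), corrected by the Chern torsion $T_{ij \bk} = g_{j \bk, i} - g_{i \bk, j}$; these torsion corrections are exactly what distinguishes the present Hermitian computation from the K\"ahler case, where they vanish. Contracting this expression against the components $g_{i \bj}$ of $\gw$ produces $(\delb^*_{\gw} \gw)_k$ as a sum of terms of the form $g^{\bq p} g_{\cdot \cdot, \cdot}$. Applying $\delb$ to this $(1,0)$-form (and $\del$ to its conjugate) then generates a second-derivative ``rough Laplacian'' term together with several quadratic first-derivative terms, using the inverse-derivative identity $\delb_{\bq} g^{\bs r} = - g^{\bs m} g^{\bn r} g_{m \bn, \bq}$ whenever an inverse metric is differentiated.

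Next I would handle the transgression term. From $\del_p \log \det g = g^{\bs r} g_{r \bs, p}$, a direct differentiation in $\bar{z}^q$ (again using the inverse-derivative identity) expresses $-\tfrac{\i}{2} \del \delb \log \det g$ as a second-derivative piece of trace type $g^{\bs r} g_{r \bs, i \bj}$ minus a product of two first derivatives $g^{\bs m} g^{\bn r} g_{m \bn, \bj} g_{r \bs, i}$. The final step is to add the three contributions. Here the \emph{pluriclosed} hypothesis is essential: the identity $\del \delb \gw = 0$ reads in coordinates $g_{i \bj, k \bl} - g_{k \bj, i \bl} - g_{i \bl, k \bj} + g_{k \bl, i \bj} = 0$, and contracting it with an inverse metric converts the trace-type second derivative coming from $\log \det g$ into the rough-Laplacian form, so that all second-derivative contributions collapse to the single clean term $- \i g^{\bq p} g_{i \bj, p \bq}$. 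Simultaneously, the quadratic first-derivative terms produced by the two adjoint computations and by the transgression term must reorganize into precisely the three surviving products $g^{\bq r} g^{\bs p}\left( g_{r \bs, \bj} g_{p \bq, i} - g_{r \bs, \bj} g_{i \bq, p} - g_{r \bs, i} g_{p \bj, \bq} \right)$.

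The main obstacle is exactly this last bookkeeping: correctly determining the torsion corrections in the adjoint $\delb^*_{\gw}$, and then matching every first-derivative-squared monomial across the three pieces, with correct index placement and signs, so that they cancel down to the three stated terms. A useful independent check, which I would run in parallel, is to verify the result against the evolution identity $\dt g = - S + Q$ of (\cite{ST1} Proposition 3.3): since $\gw = \i g$ and $\dt \gw = - P$, the formula must satisfy $P(\gw)_{i \bj} = \i (S_{i \bj} - Q_{i \bj})$, so the second-derivative term should reproduce the second Chern--Ricci trace $S_{i \bj}$ and the quadratic terms should reproduce the connection term of $S$ minus the torsion quantity $Q_{i \bj} = g^{\bl k} g^{\bn m} T_{i k \bn} T_{\bj \bl m}$ of (\ref{Qdef}). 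Confirming this match pins down all signs and index conventions and certifies that no quadratic term has been dropped.
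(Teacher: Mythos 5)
This lemma is stated in the paper without proof — it is quoted directly from \cite{ST1} — and the derivation in that source is exactly the direct coordinate computation you propose, so there is no divergence of approach to report. Your outline is correct: the conjugation symmetry $\del^*_{\gw}\gw = \overline{\delb^*_{\gw}\gw}$, the torsion-corrected coordinate formula for $\delb^*_{\gw}$, the transgression identity for $\del\delb\log\det g$, the contracted pluriclosed identity $g^{\bl k}\left(g_{i\bj,k\bl} - g_{k\bj,i\bl} - g_{i\bl,k\bj} + g_{k\bl,i\bj}\right) = 0$ that collapses all second-order contributions to the single term $g^{\bq p}g_{i\bj,p\bq}$, and the cross-check $P(\gw)_{i\bj} = \i\left(S_{i\bj} - Q_{i\bj}\right)$ against (\cite{ST1} Proposition 3.3) are each the right ingredients and fit together exactly as you describe.
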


\begin{prop} \label{curvid} Let $(M^{2n}, g, J_{\pm})$ be a generalized K\"ahler
manifold such
that $[J_+,J_-] = 0$.  Then
\begin{align*}
P(\gw) =&\ \rho_+^+ - \rho_-^+ - \rho_+^- + \rho_-^-.
\end{align*}
\begin{proof} We perform calculations in local coordinates as described above. 
We will frequently make use of (\ref{metricsplit}) - (\ref{pccoord}), sometimes
without explicitly saying so.  To begin we observe a simplification of the first
order terms in the local
expression for $P$.  In particular, since $g_{\ga_+ \bgb_-} = 0$, it follows
that
\begin{align*}
 g^{\bq r} g^{\bs p} &
\left( g_{r\bs,\bj} g_{p\bq,i} - g_{r\bs,\bj} g_{i\bq,p} - g_{r\bs,i}
g_{p\bj,\bq} \right)\\
=&\ g^{\bgb_+ \ga_+} g^{\bgg_+ \gd_{+}} \left( g_{\ga_+ \bgg_+,\bj} g_{\gd_+
\bgb_+,i} - g_{\ga_+ \bgg_+,\bj} g_{i \bgb_+,\gd_+} - g_{\ga_+ \bgg_+,i}
g_{\gd_+ \bj,\bgb_+} \right)\\
&\ + g^{\bgb_- \ga_-} g^{\bgg_- \gd_{-}} \left( g_{\ga_- \bgg_-,\bj} g_{\gd_-
\bgb_-,i} - g_{\ga_- \bgg_-,\bj} g_{i \bgb_-,\gd_-} - g_{\ga_- \bgg_-,i}
g_{\gd_- \bj,\bgb_-} \right).
\end{align*}
Using this identity we can compute
\begin{align*}
-P_{\mu_+ \bnu_+} =&\ \i \left[ g^{\bj i} g_{\mu_+ \bnu_+,i \bj} + g^{\bgb_+
\ga_+} g^{\bgg_+ \gd_{+}} \left( g_{\ga_+ \bgg_+,\bnu_+} g_{\gd_+
\bgb_+,\mu_+} - g_{\ga_+ \bgg_+,\bnu_+} g_{\mu_+ \bgb_+,\gd_+} - g_{\ga_+
\bgg_+,\mu_+}
g_{\gd_+ \bnu_+,\bgb_+} \right) \right. \\
&\ \qquad \qquad \left. + g^{\bgb_- \ga_-} g^{\bgg_- \gd_{-}} \left( g_{\ga_-
\bgg_-,\bnu_+} g_{\gd_-
\bgb_-,\mu_+} - g_{\ga_- \bgg_-,\bnu_+} g_{\mu_+ \bgb_-,\gd_-} - g_{\ga_-
\bgg_-,\mu_+}
g_{\gd_- \bnu_+,\bgb_-} \right) \right]\\
=&\ \i \left[ g^{\bgb_+ \ga_+} g_{\mu_+ \bnu_+,\ga_+ \bgb_+} + g^{\bgb_- \ga_-}
g_{\mu_+ \bnu_+,\ga_- \bgb_-} \right.\\
&\ \left. \qquad \qquad - g^{\bgb_+ \ga_+} g^{\bgg_+ \gd_+} g_{\ga_+
\bgg_+,\mu_+} g_{\gd_+ \bnu_+,\bgb_+} + g^{\bgb_- \ga_-} g^{\bgg_- \gd_-}
g_{\ga_- \bgg_-,\bnu_+} g_{\gd_-\bgb_-,\mu_+} \right]\\
=&\ \i \left[ g^{\bgb_+ \ga_+} g_{\ga_+ \bgb_+,\mu_+ \bnu_+} - g^{\bgb_- \ga_-}
g_{\ga_- \bgb_-,\mu_+ \bnu_+} \right.\\
&\ \left. \qquad \qquad - g^{\bgb_+ \ga_+} g^{\bgg_+ \gd_+} g_{\ga_+
\bgg_+,\mu_+} g_{\gd_+ \bgb_+,\bnu_+} + g^{\bgb_- \ga_-} g^{\bgg_- \gd_-}
g_{\ga_- \bgg_-,\bnu_+} g_{\gd_-\bgb_-,\mu_+} \right]\\
=&\ \left[ - \rho_+ + \rho_- \right]_{\mu_+ \bnu_+}.
\end{align*}
Observe that in the second to last line we applied the partial K\"ahler
condition (\ref{partialK}) for the term $g_{\mu_+ \bnu_+,\ga_+ \bgb_+}$ and the
pluriclosed condition (\ref{pccoord}) for the term $g_{\mu_+ \bnu_+,\ga_-
\bgb_-}$.
Next we compute
\begin{align*}
 -P_{\mu_+ \bnu_-} =&\ \i \left[ g^{\bj i} g_{\mu_+ \bnu_-,i\bj} + g^{\bgb_+
\ga_+} g^{\bgg_+ \gd_{+}} \left( g_{\ga_+ \bgg_+,\bnu_-} g_{\gd_+
\bgb_+,\mu_+} - g_{\ga_+ \bgg_+,\bnu_-} g_{\mu_+ \bgb_+,\gd_+} - g_{\ga_+
\bgg_+,\mu_+}
g_{\gd_+ \bnu_-,\bgb_+} \right) \right. \\
&\ \qquad \qquad \left. + g^{\bgb_- \ga_-} g^{\bgg_- \gd_{-}} \left( g_{\ga_-
\bgg_-,\bnu_-} g_{\gd_-
\bgb_-,\mu_+} - g_{\ga_- \bgg_-,\bnu_-} g_{\mu_+ \bgb_-,\gd_-} - g_{\ga_-
\bgg_-,\mu_+}
g_{\gd_- \bnu_-,\bgb_-} \right) \right]\\
=&\ \i \left[ g^{\bgb_+
\ga_+} g^{\bgg_+ \gd_{+}} \left( g_{\ga_+ \bgg_+,\bnu_-} g_{\gd_+
\bgb_+,\mu_+} - g_{\ga_+ \bgg_+,\bnu_-} g_{\mu_+ \bgb_+,\gd_+} \right) \right.\\
&\ \qquad \qquad \left. +
g^{\bgb_- \ga_-} g^{\bgg_- \gd_{-}} \left( g_{\ga_- \bgg_-,\bnu_-} g_{\gd_-
\bgb_-,\mu_+} - g_{\ga_-
\bgg_-,\mu_+}
g_{\gd_- \bnu_-,\bgb_-} \right) \right]\\
=&\ \i \left[ g^{\bgb_- \ga_-} g^{\bgg_- \gd_-} \left( g_{\ga_- \bgg_-,\bnu_-}
g_{\gd_- \bgb_-,\mu_+} - g_{\ga_- \bgg_-,\mu_+} g_{\gd_- \bgb_-,\bnu_-} \right)
\right]\\
=&\ \i \left[ g^{\bgb_- \ga_-} g^{\bgg_- \gd_-} \left( g_{\ga_- \bgg_-,\bnu_-}
g_{\gd_- \bgb_-,\mu_+} - g_{\gd_- \bgb_-,\mu_+} g_{\ga_- \bgg_-,\bnu_-} \right)
\right]\\
=&\ 0.
\end{align*}
It follows similarly that $P_{\mu_- \bar{\nu}_+} =0$.  Lastly we compute
\begin{align*}
 -P_{\mu_- \bnu_-} =&\ \i \left[ g^{\bj i} g_{\mu_- \bnu_-,i \bj} + g^{\bgb_+
\ga_+} g^{\bgg_+ \gd_{+}} \left( g_{\ga_+ \bgg_+,\bnu_-} g_{\gd_+
\bgb_+,\mu_-} - g_{\ga_+ \bgg_+,\bnu_-} g_{\mu_- \bgb_+,\gd_+} - g_{\ga_+
\bgg_+,\mu_-}
g_{\gd_+ \bnu_-,\bgb_+} \right) \right. \\
&\ \qquad \qquad \left. + g^{\bgb_- \ga_-} g^{\bgg_- \gd_{-}} \left( g_{\ga_-
\bgg_-,\bnu_-} g_{\gd_-
\bgb_-,\mu_-} - g_{\ga_- \bgg_-,\bnu_-} g_{\mu_- \bgb_-,\gd_-} - g_{\ga_-
\bgg_-,\mu_-}
g_{\gd_- \bnu_-,\bgb_-} \right) \right]\\
=&\ \i \left[ g^{\bgb_+ \ga_+} g_{\mu_- \bnu_-,\ga_+ \bgb_+} + g^{\bgb_- \ga_-}
g_{\mu_- \bnu_-,\ga_- \bgb_-} \right.\\
&\ \left. \qquad \qquad + g^{\bgb_+ \ga_+} g^{\bgg_+ \gd_+} g_{\ga_+
\bgg_+,\nu_-} g_{\gd_+ \bgb_+,\bnu_-} - g^{\bgb_- \ga_-} g^{\bgg_- \gd_-}
g_{\ga_- \bgg_-,\mu_-} g_{\gd_-\bnu_-,\bgb_-} \right]\\
=&\ \i \left[ - g^{\bgb_+ \ga_+} g_{\ga_+ \bgb_+,\mu_- \bnu_-} + g^{\bgb_-
\ga_-}
g_{\ga_- \bgb_-,\mu_- \bnu_-} \right.\\
&\ \left. \qquad \qquad + g^{\bgb_+ \ga_+} g^{\bgg_+ \gd_+} g_{\ga_+
\bgg_+,\nu_-} g_{\gd_+ \bgb_+,\bnu_-} - g^{\bgb_- \ga_-} g^{\bgg_- \gd_-}
g_{\ga_- \bgg_-,\mu_-} g_{\gd_-\bnu_-,\bgb_-} \right]\\
=&\ \left[ - \rho_- + \rho_+ \right]_{\mu_- \bnu_-}.
\end{align*}
\end{proof}
\end{prop}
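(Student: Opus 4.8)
The plan is to compute $P(\gw)$ entirely in the adapted local coordinates of Lemma~\ref{coordsplit}, where the metric is block diagonal with respect to $T^{\mathbb C}M = T^{\mathbb C}_+M \oplus T^{\mathbb C}_-M$, and to match $P$ against the combination $\rho_+^+ - \rho_-^+ - \rho_+^- + \rho_-^-$ one block at a time. Starting from the general coordinate formula for $P$ recorded in the preceding lemma, I would first exploit that $g_{\ga_+\bgb_-}=0$ by (\ref{metricsplit}), so that the inverse metric $g^{\bq p}$ is likewise block diagonal. This collapses every contraction appearing in $P$ into a sum of a purely $+$-indexed contraction and a purely $-$-indexed contraction, yielding the two-line simplification of the first-order terms. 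It then suffices to evaluate the four blocks $P_{\mu_+\bnu_+}$, $P_{\mu_+\bnu_-}$, $P_{\mu_-\bnu_+}$, $P_{\mu_-\bnu_-}$ separately and compare to the explicit formulas for $\rho_\pm^\pm$ recorded above.

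For the diagonal block $P_{\mu_+\bnu_+}$, the second-order term $g^{\bq p}g_{\mu_+\bnu_+,p\bq}$ splits into a pure piece $g^{\bgb_+\ga_+}g_{\mu_+\bnu_+,\ga_+\bgb_+}$ and a mixed piece $g^{\bgb_-\ga_-}g_{\mu_+\bnu_+,\ga_-\bgb_-}$. The pure piece is reorganized into $g^{\bgb_+\ga_+}g_{\ga_+\bgb_+,\mu_+\bnu_+}$ by commuting the $+$-derivatives, which is permitted by the partial K\"ahler condition (\ref{partialK}); this reproduces the leading term of $\rho_+^+$. The mixed piece is handled by the pluriclosed identity (\ref{pccoord}), which converts $g_{\mu_+\bnu_+,\ga_-\bgb_-}$ into $-g_{\ga_-\bgb_-,\mu_+\bnu_+}$: the crucial minus sign here is precisely what turns this contribution into the leading term of $-\rho_-^+$. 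Among the quadratic first-order terms, the partial K\"ahler condition again forces the spurious terms to cancel in pairs, leaving exactly the quadratic parts of $\rho_+^+$ and $-\rho_-^+$. Comparing with the explicit formulas gives $P_{\mu_+\bnu_+} = (\rho_+^+ - \rho_-^+)_{\mu_+\bnu_+}$, and an identical computation with the roles of $+$ and $-$ interchanged yields $P_{\mu_-\bnu_-} = (\rho_-^- - \rho_+^-)_{\mu_-\bnu_-}$.

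The off-diagonal blocks should vanish, consistent with $\rho_\pm^\pm$ being of pure type. For $P_{\mu_+\bnu_-}$ the second-order term drops out immediately since $g_{\mu_+\bnu_-}=0$, and the surviving first-order terms pair off and cancel once the partial K\"ahler condition (\ref{partialK}) is used to commute the relevant derivatives; the block $P_{\mu_-\bnu_+}$ is symmetric. Assembling the four blocks produces the asserted identity $P(\gw) = \rho_+^+ - \rho_-^+ - \rho_+^- + \rho_-^-$. The bookkeeping is lengthy but mechanical; the one genuinely delicate point, and the origin of the alternating signs in the statement, is the use of the pluriclosed condition (\ref{pccoord}) to rewrite the mixed fourth-order derivative, since its sign flip is exactly what distinguishes the contributions of the two bundles $T^{\mathbb C}_\pm M$ within a single block. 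The main risk in carrying this out is sign and index errors when matching the quadratic first-order terms against the second summand in each $\rho_\pm^\pm$ formula.
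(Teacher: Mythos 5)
Your proposal follows essentially the same route as the paper's own proof: compute in the adapted coordinates where $g$ is block diagonal, simplify the first-order terms using (\ref{metricsplit}), evaluate the four blocks of $P$ separately, and match against $\rho_\pm^\pm$ using (\ref{partialK}) for the pure second-order terms and the sign-flipping pluriclosed identity (\ref{pccoord}) for the mixed ones, with the off-diagonal blocks vanishing by pairwise cancellation. You correctly isolate the one delicate point — the minus sign from (\ref{pccoord}) producing the alternating signs in the formula — so this is the paper's argument in all essentials.
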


Using this formula, we can easily observe that in this setting the tensor $P$
satisfies the conditions of a generalized K\"ahler metric, excepting positivity.
 One notes easily using the defining equations for $\gw_{\pm}$ that $\gw_- = -
\gw_+(\Pi \cdot, \cdot)$, therefore if we want to think of our flow solely in
terms of $\gw_+$ evolving by $P(\gw_+)$, we get an induced equation on $\gw_-$
flowing by $- P(\gw_+)(\Pi \cdot,\cdot)$.  The corollary below represents a
check that this will preserve the generalized K\"ahler condition.  The full
proof is in \S \ref{reduction}.

\begin{cor} \label{PGKcor} Let $(M^{2n}, g, J_{\pm})$ be a generalized K\"ahler
manifold satisfying $[J_+,J_-] = 0$.  Then
\begin{align} \label{PGKcor10}
d^c_+ P(\gw_+) = d^c_-\left[ P(\gw_+)( \Pi \cdot, \cdot) \right]
\end{align}
\begin{proof} First we compute, using that $d \rho_{\pm} = 0$,
\begin{align*}
d P = \left( \gd_+ + \gd_- \right) \left( \rho_+^+ - \rho_-^+ - \rho_+^- +
\rho_-^- \right) =&\ \gd_- \rho_+^+ - \gd_- \rho_-^+ - \gd_+ \rho_+^- + \gd_+
\rho_-^-
\end{align*}
Next we observe using the definition of $T_{\pm}$ that
\begin{align*}
P(\gw_+) (\Pi \cdot,\cdot) = \rho_+^+ - \rho_-^+ + \rho_+^- - \rho_-^-,
\end{align*}
and hence
\begin{align*}
d \left[ P(\Pi \cdot, \cdot) \right] =&\ \gd_- \rho_+^+ - \gd_- \rho_-^+ + \gd^+
\rho_+^- - \gd^+ \rho_-^-.
\end{align*}
Using (\ref{Jaction}), it follows that the actions of $J_{\pm}$ on $\gd_-
\rho_+^+ - \gd_- \rho_+^+$ are equal, whereas they differ by a sign on $\gd^+
\rho_+^- - \gd^+ \rho_-^-$.  The corollary follows.
\end{proof}
\end{cor}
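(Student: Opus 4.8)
The plan is to start from the clean expression for $P$ furnished by Proposition \ref{curvid}, namely $P(\gw_+) = \rho_+^+ - \rho_-^+ - \rho_+^- + \rho_-^-$, and to reduce the identity (\ref{PGKcor10}) to bookkeeping of how $\gd_{\pm}$ and $J_{\pm}$ interact with the four pure-type Chern pieces. First I would compute $dP$. Since each $\rho_{\pm}$ represents a first Chern class it is closed, $d \rho_{\pm} = 0$, so writing $d = \gd_+ + \gd_-$ the only surviving contributions are the cross terms: the point is that each pure piece is annihilated by the differential carrying its own label, i.e.\ $\gd_+ \rho_{\bullet}^+ = 0$ and $\gd_- \rho_{\bullet}^- = 0$ for $\bullet \in \{+,-\}$. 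This is transparent in the adapted product coordinates of Lemma \ref{coordsplit}, where the pure piece $\rho_{\bullet}^{\pm}$ equals $- \i\, \del_{\pm} \delb_{\pm} \log \det g_{\bullet}$, and $\gd_{\pm} = \del_{\pm} + \delb_{\pm}$ kills $\del_{\pm}\delb_{\pm}$ of a function because $\del_{\pm}^2 = \delb_{\pm}^2 = 0$ and $\del_{\pm}, \delb_{\pm}$ anticommute on each factor. Hence
\[
dP = \gd_- \rho_+^+ - \gd_- \rho_-^+ - \gd_+ \rho_+^- + \gd_+ \rho_-^-.
\]

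Next I would handle the right-hand side. Because $\Pi = J_+ J_-$ equals $+\Id$ on $T_+ M$ and $-\Id$ on $T_- M$ by (\ref{decomp}), inserting $\Pi$ into the first slot of the $(1,1)$-form $P$ fixes the $(+,+)$-type pieces and negates the $(-,-)$-type pieces, giving $P(\gw_+)(\Pi\cdot,\cdot) = \rho_+^+ - \rho_-^+ + \rho_+^- - \rho_-^-$. Applying $d$ together with the same vanishing observations yields
\[
d\left[ P(\gw_+)(\Pi\cdot,\cdot) \right] = \gd_- \rho_+^+ - \gd_- \rho_-^+ + \gd_+ \rho_+^- - \gd_+ \rho_-^-.
\]
Thus $dP$ and $d[P(\Pi\cdot,\cdot)]$ agree on the two $\gd_-$ terms and differ precisely by a sign on the two $\gd_+$ terms.

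The final step is to convert $d$ into $d^c_{\pm}$. The operators $d^c_{\pm}$ are obtained from $d$ by an argument-wise action of $J_{\pm}$ (up to the standard degree sign), and since $J_{\pm}$ preserve the eigenspaces of $\Pi$ this action does not mix $T_+$- and $T_-$-legs. From the block form (\ref{Jaction}), $J_-$ agrees with $J_+$ on $T_- M$ and is opposite to $J_+$ on $T_+ M$, so on any leg-homogeneous three-form the $J_-$-twist equals $(-1)^{m}$ times the $J_+$-twist, where $m$ is the number of $T_+$-legs. Each $\gd_-$ term $\gd_-\rho_{\bullet}^+$ has two $T_+$-legs and one $T_-$-leg, so $m$ is even and its $d^c_+$ and $d^c_-$ images coincide; each $\gd_+$ term $\gd_+\rho_{\bullet}^-$ has one $T_+$-leg and two $T_-$-legs, so $m$ is odd and the $d^c_-$ image is the negative of the $d^c_+$ image. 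This extra sign on the $\gd_+$ terms exactly cancels the sign discrepancy between $dP$ and $d[P(\Pi\cdot,\cdot)]$, yielding $d^c_+ P(\gw_+) = d^c_-[P(\gw_+)(\Pi\cdot,\cdot)]$ term by term.

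I expect the main obstacle to be the sign bookkeeping of the last paragraph: one must fix the precise convention for $d^c_{\pm}$, verify that it really amounts to an argument-wise $J_{\pm}$-twist that leaves the leg-type of each term intact, and confirm that the parity count of $T_+$-legs produces exactly the sign needed to match the discrepancy. A secondary subtlety is justifying the vanishing $\gd_+\rho_{\bullet}^+ = \gd_-\rho_{\bullet}^- = 0$ globally; the coordinate argument is local, so I would either observe that these are identities between globally defined forms (hence checkable in any chart via Lemma \ref{coordsplit}) or re-derive them invariantly from $d\rho_{\pm} = 0$ together with the type decomposition.
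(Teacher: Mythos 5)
Your proposal is correct and is essentially the paper's own proof: both compute $dP$ from $d\rho_{\pm}=0$ together with the pure-type vanishings $\gd_+\rho_+^+=\gd_+\rho_-^+=\gd_-\rho_+^-=\gd_-\rho_-^-=0$, identify $P(\gw_+)(\Pi\cdot,\cdot)$ from the $\pm 1$ eigenvalues of $\Pi$, and then compare $d^c_+P$ with $d^c_-\left[P(\Pi\cdot,\cdot)\right]$ by a leg-parity count for the $J_{\pm}$-twists. Your write-up is in fact more careful than the paper's at the two delicate points (the global justification of the vanishings, and the sign bookkeeping), and your sign rule --- $J_+$ and $J_-$ agreeing on $T_-M$ and opposite on $T_+M$ --- is the one forced by (\ref{decomp}) and is exactly what makes the signs close, even though the block form (\ref{Jaction}) combined with the labeling of Lemma \ref{coordsplit} literally reads the other way (an inconsistency internal to the paper, not a flaw in your argument).
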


\subsection{Reduction to a scalar potential} \label{reduction}

Our next goal is to develop a formal picture of the existence time for solutions
to the pluriclosed flow on generalized K\"ahler manifolds with $[J_+,J_-] = 0$. 
We first show that after the selection of background data the flow in this
setting can be reduced to a flow of a scalar potential function.  We then
exhibit a natural cohomology space with an associated ``positive cone" in
analogy
with the K\"ahler cone which suggests a conjectural maximal existence time for
the pluriclosed flow in this setting, specializing (\cite{ST2} Conjecture 5.2)

\begin{lemma} \label{reductionlemma} Let $(M^{2n}, g_0, J_{\pm})$ be a
generalized K\"ahler manifold. 
Let $h_{\pm}$ denote metrics on $T_{\pm} M$, respectively, and
let
\begin{align*}
 P = P(h_{\pm}) = \rho^+(h_+) - \rho^+(h_-) - \rho^-(h_+) + \rho^-(h_-).
\end{align*}
Furthermore, set $\gw_t = \gw_0 - t P$, and for a smooth function $f$ let
\begin{align*}
 \gw^f_t := \gw_t + \left( \gd_+ \gd_+^c - \gd_- \gd^c_- \right) f.
\end{align*}
Lastly, suppose $f_t$ solves
\begin{gather} \label{scalarflow}
\begin{split}
 \dt f =&\ \log \frac{\det g^f_{+} \det h_{-}}{\det h_{+} \det g^f_{-}},\\
 f(0) =&\ 0.
\end{split}
\end{gather}
Then $\gw^f_t$ is the unique solution to pluriclosed flow with initial condition
$\gw_0$.
\begin{proof} Using the given definitions, (\ref{trans}) and Proposition
\ref{curvid} we compute
\begin{align*}
 \dt \gw_t^f =&\ - \rho^+ (h_+) + \rho^+(h_-) + \rho^-(h_+) - \rho^-(h_-) + 
\left( \gd_+ \gd_+^c - \gd_- \gd_-^c \right) \log \frac{\det g_+^f \det
h_{-}}{\det h_{+}
\det g^f_-}\\
 =&\ \left[ \gd_+ \gd_+^c \left( \log \det h_+ - \log \det h_- + \log
\frac{\det g_+^f \det h_{-}}{\det h_{+}
\det g^f_-} \right) \right.\\
 &\ \qquad \left. + \gd_- \gd_-^c \left( \log \det h_- - \log \det h_+ - \log
\frac{\det g_+^f \det h_{-}}{\det h_{+}
\det g^f_-} \right) \right]\\
=&\ \left[ \gd_+ \gd_+^c \left( \log \det g^f_+ - \log \det g_-^f \right) -
\gd_-
\gd_-^c \left( \log \det g_+^f - \log \det g_-^f \right) \right]\\
=&\ - \rho_+^+ + \rho_-^+ + \rho_+^- - \rho_-^-\\
=&\ - P(\gw_t^f).
\end{align*}
The lemma follows.
\end{proof}
\end{lemma}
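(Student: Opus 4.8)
The plan is to verify directly that the family $\gw^f_t$ satisfies the pluriclosed flow equation $\dt \gw = - P(\gw)$ with $\gw^f_0 = \gw_0$, and then to conclude by uniqueness. Since $P = P(h_\pm)$ is built from the \emph{fixed} background metrics $h_\pm$, it is a time-independent $(1,1)$-form, so $\gw_t = \gw_0 - t P$ is affine in $t$ and differentiating the ansatz gives
\begin{align*}
\dt \gw^f_t = - P(h_\pm) + \left( \gd_+ \gd_+^c - \gd_- \gd_-^c \right) \dt f.
\end{align*}
Substituting the scalar evolution equation (\ref{scalarflow}) for $\dt f$ turns the right-hand side into an expression built entirely out of $\gd_\pm \gd_\pm^c$ applied to log-determinants, which I will then reorganize through the transgression formula.

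The key algebraic step uses (\ref{trans}), under which each block of a Chern form is $- \gd_\pm \gd_\pm^c$ of the corresponding $\log \det$; in particular $\rho^+(h_\pm) = - \gd_+ \gd_+^c \log \det h_\pm$ and $\rho^-(h_\pm) = - \gd_- \gd_-^c \log \det h_\pm$, and likewise for the determinant terms coming from $\dt f$. Collecting the $\gd_+ \gd_+^c$ contributions and the $\gd_- \gd_-^c$ contributions separately, the pieces involving $\log \det h_+$ and $\log \det h_-$ cancel — this cancellation is precisely what the numerator/denominator structure of (\ref{scalarflow}) is engineered to produce — leaving
\begin{align*}
\dt \gw^f_t = \left( \gd_+ \gd_+^c - \gd_- \gd_-^c \right)\left( \log \det g^f_+ - \log \det g^f_- \right).
\end{align*}
Reading this back through (\ref{trans}) identifies the right-hand side with $- \rho_+^+ + \rho_-^+ + \rho_+^- - \rho_-^-$, now with all Chern forms computed from the metric $g^f$ underlying $\gw^f_t$.

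To pass from this to $- P(\gw^f_t)$ I apply Proposition \ref{curvid}, and the point most deserving of care — and the main (if modest) obstacle — is that this proposition requires $\gw^f_t$ itself to be a generalized K\"ahler metric with $[J_+,J_-]=0$. Here I would check that the structural identities (\ref{metricsplit})--(\ref{pccoord}) are preserved along the ansatz. Since $\gd_+ \gd_+^c f$ modifies only the $T_+$ block $g_{\ga_+ \bgb_+}$ and $- \gd_- \gd_-^c f$ only the $T_-$ block $g_{\ga_- \bgb_-}$, the orthogonality (\ref{metricsplit}) persists, and the partial K\"ahler conditions (\ref{partialK}) persist because mixed coordinate derivatives commute, e.g. $f_{,\ga_+ \bgb_+ \mu_+} = f_{,\mu_+ \bgb_+ \ga_+}$. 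Crucially, the pluriclosed identity (\ref{pccoord}) is preserved exactly because the two blocks enter with \emph{opposite} signs: the new contribution $f_{,\ga_+ \bgb_+ \mu_- \bnu_-} - f_{,\mu_- \bnu_- \ga_+ \bgb_+}$ vanishes. Thus the combination $\gd_+ \gd_+^c - \gd_- \gd_-^c$, rather than a sum, is forced on us by the structural equations, and with it Proposition \ref{curvid} applies to $\gw^f_t$, completing the identification $\dt \gw^f_t = - P(\gw^f_t)$.

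Finally, for uniqueness I would note that (\ref{scalarflow}) is parabolic: its linearization in $f$ is $(g^f_+)^{\bgb_+ \ga_+} \del_{\ga_+} \delb_{\gb_+} + (g^f_-)^{\bgb_- \ga_-} \del_{\ga_-} \delb_{\gb_-}$, which is positive definite across the whole tangent bundle (the opposite signs in the potential ansatz conspire with the $-\log \det g^f_-$ to make both blocks contribute positively). Hence $f_t$ exists for a short time and keeps $\gw^f_t$ positive, so the log-determinants are well defined. Since $\gw^f_0 = \gw_0$ (as $f(0)=0$) and pluriclosed flow has unique solutions from given initial data, $\gw^f_t$ must coincide with that solution on the common interval of existence, which is the assertion of the lemma.
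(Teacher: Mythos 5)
Your proposal is correct and follows essentially the same route as the paper: differentiate the ansatz, use the transgression formula (\ref{trans}) to cancel the background $h_{\pm}$ contributions, and identify the remaining term with $-P(\gw^f_t)$ via Proposition \ref{curvid}, concluding by uniqueness. The additional checks you include --- that the sign structure of the ansatz preserves (\ref{metricsplit})--(\ref{pccoord}) so that Proposition \ref{curvid} legitimately applies to $\gw^f_t$, and that (\ref{scalarflow}) is strictly parabolic so short-time existence and uniqueness hold --- are details the paper leaves implicit (parabolicity is only invoked later, in the proof of Theorem \ref{mainthm1}), and they strengthen rather than alter the argument.
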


\begin{rmk} Equation (\ref{scalarflow}) is a fully nonlinear, parabolic equation
for $f$.  Crucially
however, it is \emph{nonconvex}, and therefore the well-understood theories of
Evans-Krylov \cite{Evans,Krylov} on $C^{2,\ga}$ regularity do not apply to this
equation, nor does the direct method of ``Calabi's third order estimate.'' 
Obtaining this estimate for this equation in general is a serious challenge,
which we
overcome in the case $n=2$ using delicate maximum principle arguments in \S
\ref{harnack}.
\end{rmk}

\begin{rmk} The corresponding fixed-point equation for (\ref{scalarflow}) has
appeared in
physics literature as a ``generalized Calabi-Yau" condition
\cite{Buscher,HullSSC,HullGCY,RocekMCY}.
\end{rmk}

\begin{proof} [Proof of Theorem \ref{mainthm1}] Since $\gw_0 > 0$ we can fix
some background metrics $h_{\pm}$ as in Lemma \ref{reductionlemma}, and some
$\ge > 0$ so that $\gw_0 - \ge P(h_{\pm}) > 0$.  From a calculation identical to
that in Lemma \ref{fdotev} one can show that equation (\ref{scalarflow}) is
strictly parabolic, and thus, for a smaller $\ge > 0$ there is a solution to
(\ref{scalarflow}) on $[0,\ge)$.  By Lemma \ref{reductionlemma} the family of
metrics metrics $\gw_t^f = \gw_0 - t P(h_{\pm}) + \i \left( \gd_+ \gd^c - \gd_-
\gd_-^c
\right) f$ is the unique solution to pluriclosed flow with initial condition
$\gw_0$.  Moreover, by Corollary \ref{PGKcor} these metrics are generalized
K\"ahler with
respect to $J_{\pm}$, finishing the proof.
 \end{proof}

\begin{rmk} In \cite{ST2} the author and Tian showed that in general the
pluriclosed flow can be reduced to a degenerate parabolic equation for a
$(0,1)$-form.  In this case, due to the extra complex structure, we have reduced
all the way to a scalar equation.  By using the formulas above, one can easily
recover that the $(0,1)$-form potential, in this case, is
\begin{align*}
\ga = \i \left(\delb_+ f - \delb_- f \right)
\end{align*}
\end{rmk}

\subsection{The positive cone} \label{conesec}

In this subsection we define a notion of a ``positive cone'' for generalized
K\"ahler metrics in analogy with the K\"ahler cone in K\"ahler geometry.

\begin{defn} Let $(M^{2n}, J_A, J_B)$ be a bihermitian manifold with $[J_A,J_B]
= 0$.  Given $\phi_A \in \Lambda^{1,1}_{J_A,\mathbb R}$, let $\phi_B = - \phi_A(
\Pi
\cdot, \cdot) \in \Lambda^{1,1}_{J_B,\mathbb R}$.  We say that $\phi_A$ is
\emph{formally generalized K\"ahler} if
\begin{gather} \label{FGK}
\begin{split}
d^c_{J_A} \phi_A =&\ - d^c_{J_B} \phi_B\\
d d^c_{J_A} \phi_A =&\ 0.
\end{split}
\end{gather}
\end{defn}

This definition is meant to codify the formal properties that a generalized
K\"ahler metric satisfies, other than positivity.  The content of Corollary
\ref{PGKcor} is that $P$ is formally generalized K\"ahler, and in particular the
path $\gw_0 - t P(h_{\pm})$ consists of formally generalized K\"ahler forms.  We
next study how long this path can possibly represent a class which contains
genuine generalized K\"ahler metrics.

\begin{defn} Let $(M^{2n}, g, J_A, J_B)$ denote a generalized K\"ahler manifold
such that $[J_A,J_B] = 0$.  Let
\begin{align*}
 \mathcal H := \frac{ \left\{ \phi_A \in \Lambda^{1,1}_{J_A, \mathbb R}\ |  \
\phi_A 
\mbox{ satisfies } (\ref{FGK}) \right\}}{ \left\{ \gd_+ \gd^c_+ f -
\gd_- \gd^c_- f \right\}}.
\end{align*}
Furthermore, set
\begin{align*}
 \mathcal P := \{ [\phi] \in \mathcal H \ | \ \exists \gw \in [\phi], \gw > 0
\}.
\end{align*}
We will refer to $\mathcal P$ as the \emph{positive cone}, and should be
interpreted in direct analogy with the K\"ahler cone on a K\"ahler manifold.
\end{defn}

\begin{defn} Let $(M^{2n}, g_0, J_{\pm})$ denote a generalized K\"ahler manifold
such that $[J_+,J_-] = 0$.  Let $h_{\pm}$ denote background metrics on
$T_{\pm}^{\mathbb C} M$, and let $P = P(h_{\pm})$ as above.  We define
\begin{align} \label{taustardef}
 \tau^* := \sup \{ t > 0 \ | \ [\gw_0- t P] \in \mathcal P \}.
\end{align}
\end{defn}
Our calculations above show that $\tau^*$ does not depend on the choice of
$h_{\pm}$.  Certainly $\tau^*$ represents the maximum possible existence time
for a solution
to pluriclosed flow in this setting.  In analogy with the Theorem of Tian-Zhang
(\cite{TianZhang}) for K\"ahler-Ricci flow, and as a specialization of the
corresponding general conjecture for pluriclosed flow (\cite{ST2}) we make the
conjecture that $\tau^*$ does indeed represent the smooth maximal existence
time.

\begin{conj} \label{coneconj} Let $(M^{2n}, g_0, J_{\pm})$ denote a generalized
K\"ahler manifold
with $[J_+,J_-] = 0$.  Let $\tau^*$ be defined as in (\ref{taustardef}).  Then
the solution to pluriclosed flow with initial condition $g_0$ exists on
$[0,\tau^*)$.
\end{conj}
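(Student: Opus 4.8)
The plan is to follow the strategy that Tian--Zhang \cite{TianZhang} used for K\"ahler--Ricci flow: reduce the flow to the scalar equation (\ref{scalarflow}), prove uniform a priori estimates on the potential $f$ to all orders on every compact subinterval of $[0,\tau^*)$, and then invoke parabolic continuation. The cohomological hypothesis enters at the outset. Fix background metrics $h_\pm$ on $T_\pm M$; as noted after (\ref{taustardef}) the resulting $\tau^*$ is independent of this choice, so the reduction of Lemma \ref{reductionlemma} is available with candidate existence time $\tau^*$. For each $T < \tau^*$ the class $[\gw_0 - T P]$ lies in the positive cone $\mathcal P$, hence admits a representative $\gw_T' > 0$ differing from $\gw_0 - TP$ by a potential term $\left( \gd_+ \gd^c_+ - \gd_- \gd^c_- \right)\psi$. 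This positive reference form, uniformly available up to any time below $\tau^*$, is what anchors all subsequent estimates, and the only mechanism by which the flow can fail to continue is the loss of positivity, i.e.\ $T$ reaching $\tau^*$.

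Short-time existence is standard: by the linearization computed in Lemma \ref{fdotev} and used in the proof of Theorem \ref{mainthm1}, equation (\ref{scalarflow}) is strictly parabolic. I would then assemble the estimates in order. First, a uniform $C^0$ bound on $f$ on $[0,T]$ (Lemma \ref{fbound}), via a maximum-principle comparison against the reference data above; together with the Harnack-type estimate, which holds in all dimensions, this pins down the oscillation of $f$. Second, a uniform bound on $\dt f$ (Proposition \ref{fdotest}): differentiating (\ref{scalarflow}) in $t$ produces a linear parabolic equation for $\dt f$ amenable to the maximum principle, where in $n = 2$ the splitting is used to control the cross terms between the two K\"ahler factors. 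Third, and most delicately, a two-sided bound $C^{-1}\gw_T' \leq \gw^f \leq C\, \gw_T'$ on the metric. The lower bound is comparatively soft, but the upper bound forces one to confront the torsion terms $Q$ appearing in the evolution of $\tr_{\gw_T'} \gw^f$; these are tamed using the torsion potential of Lemma \ref{torsionpotentialev}, whose simple evolution equation lets the bad terms be absorbed.

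The main obstacle, and the reason this remains a conjecture in general, is the $C^{2,\ga}$ estimate of Evans--Krylov type (Theorem \ref{EKregularity}). Because (\ref{scalarflow}) is \emph{nonconvex} as a function of the Hessian of $f$, the classical Evans--Krylov and Krylov--Safonov arguments, which rest essentially on concavity of the operator, do not apply, and neither does Calabi's third-order method. In dimension $n = 2$ this can be circumvented by the maximum-principle argument of \S \ref{harnack}, but that argument, together with the metric upper bound above, requires hypotheses on the background metric that are only known to hold in the surface cases listed in Theorem \ref{mainthm2}; in particular they appear to be unavailable for ruled surfaces over $\mathbb{CP}^1$, for Hopf surfaces, and in dimension $n > 2$. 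Granting the $C^{2,\ga}$ bound, standard Schauder theory bootstraps $f$ to uniform $C^\infty$ estimates on $[0,T]$, so the flow extends past $T$; since $T < \tau^*$ was arbitrary, the solution exists on all of $[0,\tau^*)$.
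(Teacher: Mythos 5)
You are attempting to prove Conjecture \ref{coneconj}, which the paper itself does not prove: it is established only in dimension $n=2$, rank one, under the cohomological hypothesis $c_1(T^{\mathbb C}_+ M) \leq 0$ or $c_1(T^{\mathbb C}_- M) \leq 0$ (Theorem \ref{halfample}), which is then verified case by case for the surfaces listed in Theorem \ref{mainthm2}. Your outline correctly recognizes this and reproduces the paper's chain of estimates for the provable cases --- reduction via Lemma \ref{reductionlemma}, the $C^0$ bound of Lemma \ref{fbound}, the $\dot f$ bound of Proposition \ref{fdotest}, two-sided metric bounds, Evans--Krylov regularity (Theorem \ref{EKregularity}), and parabolic continuation --- and you concede the argument cannot close in general. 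The problem is that you have located the gap in the wrong place, and this is a genuine error rather than a matter of emphasis.

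You call the metric lower bound ``comparatively soft'' and attribute the restriction to special surfaces to the Evans--Krylov step and the metric upper bound, claiming those ``require hypotheses on the background metric.'' In the paper it is exactly the other way around. The lower bound is where the cohomological hypothesis enters: the proof of Theorem \ref{halfample} uses $c_1(T^{\mathbb C}_+ M) \leq 0$ to choose $h_+$ with $\rho(h_+) \leq 0$, and only then does the maximum principle applied to Lemma \ref{halfdetev} yield a lower bound on $\det g_+$, hence on $g_+$ (rank one); Proposition \ref{fdotest} then transfers this to a lower bound on $g_-$. This is precisely the ``certain a priori estimate which requires certain conditions on a background metric'' whose absence blocks ruled surfaces over $\mathbb{CP}^1$ and Hopf surfaces, as the remark following Theorem \ref{mainthm2} explains. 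By contrast, the upper bound given a lower bound (Proposition \ref{mub}) and the Evans--Krylov regularity (Theorem \ref{EKregularity}) hold for \emph{arbitrary} generalized K\"ahler surfaces with no positivity or flatness assumption on a background metric; the latter needs only the uniform two-sided bound $A^{-1} g_0 \leq g_t \leq A g_0$. Relatedly, your claim that the Harnack-type estimate ``holds in all dimensions'' is false: the argument of \S \ref{harnack} is special to $n=2$, since Lemma \ref{harnacklemma} and the lemma following it exploit the surface identity $Q = \tfrac{1}{2}\brs{T}^2 g$. So the two ingredients actually missing for the general conjecture are (i) a metric lower bound without the sign hypothesis on $\rho(h_\pm)$ (and, in higher rank or dimension, a replacement for the $\det g_\pm \Rightarrow g_\pm$ implication and for Proposition \ref{fdotest}), and (ii) an Evans--Krylov-type estimate for the nonconvex equation (\ref{scalarflow}) when $n > 2$ --- not the items you flagged.
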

\noindent Our Theorem \ref{mainthm2} verifies this conjecture in $n=2$, for most
cases.

\subsection{A Calabi-type conjecture} \label{ccsec}

Our discussion above shows that $P$ is a fully nonlinear elliptic operator in
the potential function $f$.  Moreover, the image of $P$ always represents the
same cohomology class in $\HH$, generalizing the usual first Chern class
associated to a complex manifold.  While pluriclosed flow is the natural
parabolic flow associated to this operator, it seems natural to also consider
the elliptic PDE problem suggested by this picture:

\begin{conj} \label{calconj} Let $(M^{2n}, g, J_{\pm})$ denote a generalized
K\"ahler structure
satisfying $[J_+,J_-] = 0$.  Given $\phi \in [P] \in \mathcal H$, there exists a
unique $\gw_{f} \in [\gw]$ such that $P(\gw_{f}) = \phi$.
\end{conj}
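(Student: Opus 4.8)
The plan is to follow the continuity method in the spirit of Yau's resolution of the Calabi conjecture, after first reducing the equation $P(\gw_f) = \phi$ to a scalar Monge--Amp\`ere-type equation. Combining Proposition \ref{curvid} with the transgression formula (\ref{trans}), the operator $P$ applied to $\gw_f = \gw + (\gd_+ \gd_+^c - \gd_- \gd_-^c) f$ can be written as
\[
P(\gw_f) = - \left( \gd_+ \gd_+^c - \gd_- \gd_-^c \right) \log \frac{\det g_+^f}{\det g_-^f},
\]
where in the adapted coordinates $(g_+^f)_{\ga_+ \bgb_+} = g_{\ga_+ \bgb_+} + f_{,\ga_+ \bgb_+}$ and $(g_-^f)_{\ga_- \bgb_-} = g_{\ga_- \bgb_-} - f_{,\ga_- \bgb_-}$. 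Since $\phi$ and $P(\gw)$ represent the same class in $\HH$, there is, by the very definition of $\HH$, a function $\psi$ with $\phi - P(\gw) = (\gd_+ \gd_+^c - \gd_- \gd_-^c) \psi$, so that solving $P(\gw_f) = \phi$ reduces to solving the scalar equation
\[
\log \frac{\det g_+^f}{\det g_-^f} = F + c, \qquad F := \log \frac{\det g_+}{\det g_-} - \psi,
\]
subject to $g_\pm^f > 0$, where $c$ is a constant to be fixed by a solvability (normalization) condition in analogy with Yau's volume constraint. The essential structural observation is that although $f \mapsto \log(\det g_+^f / \det g_-^f)$ is a \emph{nonconvex} function of the Hessian of $f$ --- the source of all of the difficulties in this paper --- its linearization is the positive elliptic operator $u \mapsto (g_+^f)^{\bgb_+ \ga_+} u_{,\ga_+ \bgb_+} + (g_-^f)^{\bgb_- \ga_-} u_{,\ga_- \bgb_-}$, so the equation is genuinely elliptic.

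I would then set up the continuity path $F_t = \log(\det g_+ / \det g_-) - t \psi$, $t \in [0,1]$, with $t = 0$ solved by $f \equiv 0$, and let $S \subset [0,1]$ be the set of $t$ for which a solution $f_t$ exists with $\gw_{f_t} > 0$. Openness of $S$ follows from the implicit function theorem: at a solution the linearized operator above is a second-order elliptic operator of Laplace type, whose kernel and cokernel consist only of the constants by the strong maximum principle, so it is an isomorphism on the slice of $C^{2,\ga}$ functions orthogonal to the constants, while the free constant $c_t$ supplies exactly the one remaining degree of freedom needed to meet the solvability condition.

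Closedness of $S$ is the analytic heart of the problem and requires uniform a priori estimates, for which I would transpose the estimates developed for the parabolic flow to the stationary setting. The $C^0$ bound on $f$ follows as in Lemma \ref{fbound} via the maximum principle and Moser iteration. The key second-order step is to bound the metric $\gw_f$ above and below, keeping the path inside the positive cone $\mathcal P$; here I would control the troublesome torsion terms appearing in the differentiated equation for $\tr_\gw \gw_f$ by means of the torsion potential of Lemma \ref{torsionpotentialev}, exactly as in Proposition \ref{fdotest}. The genuinely hard part --- and the reason the conjecture can only plausibly be settled in low dimension --- is the $C^{2,\ga}$ estimate: the nonconvexity of the operator defeats the classical Evans--Krylov argument, and one must instead invoke the new argument of Theorem \ref{EKregularity}, which at present succeeds only for $n = 2$. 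Granting this estimate, Schauder theory and bootstrapping yield higher regularity, so $S$ is closed and hence $S = [0,1]$.

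Uniqueness is the standard Calabi argument: if $\gw_{f_1}$ and $\gw_{f_2}$ both solve $P(\gw_f) = \phi$, then writing the difference of the two scalar equations as the integral of the linearized operator along the segment of metrics joining them shows that $w := f_1 - f_2$ satisfies a homogeneous linear second-order elliptic equation with no zeroth-order term, so $w$ is constant by the maximum principle and therefore $\gw_{f_1} = \gw_{f_2}$. The main obstacle throughout is precisely the nonconvex Evans--Krylov estimate, mirroring the situation in the parabolic Theorem \ref{mainthm2}; outside the range in which Theorem \ref{EKregularity} applies the existence half of the conjecture remains genuinely open.
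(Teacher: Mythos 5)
The statement you are trying to prove is Conjecture \ref{calconj}: the paper offers \emph{no proof} of it, and explicitly says so (``While we do not address this Calabi-type conjecture directly here...''). So there is nothing in the paper to compare your argument against, and the real question is whether your proposal closes an open problem. It does not, and you essentially concede this yourself in your final sentence. To give credit where due: your reduction is correct and consistent with the paper's own formulas --- combining Proposition \ref{curvid} with (\ref{trans}) exactly as in the proof of Lemma \ref{reductionlemma} gives $P(\gw_f) = -\left( \gd_+ \gd_+^c - \gd_- \gd_-^c \right) \log \left( \det g_+^f / \det g_-^f \right)$, your identification of the linearization as the elliptic operator $u \mapsto (g_+^f)^{\bgb_+ \ga_+} u_{,\ga_+ \bgb_+} + (g_-^f)^{\bgb_- \ga_-} u_{,\ga_- \bgb_-}$ matches the parabolicity observation used in the proof of Theorem \ref{mainthm1}, and your uniqueness argument is genuinely complete: since $g_\pm^{f}$ depend linearly on $f$, the segment joining two solutions stays inside the positive cone, the averaged linearized operator is elliptic with no zeroth-order term, and the two free additive constants are killed by evaluating at the maximum and minimum of $f_1 - f_2$. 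That half of the conjecture your proposal actually establishes.

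The existence half, however, is a program with genuine gaps, not a proof. Closedness of your continuity path requires three a priori estimates, none of which exists in the paper in elliptic form. (i) The $C^0$ estimate: Lemma \ref{fbound} is a parabolic maximum-principle argument tied to the background path $\til{\gw}_t = \gw_0 - tP(h_\pm)$ and gives a bound growing linearly in $t$; it involves no Moser iteration, and no elliptic analogue is established --- recall that in Yau's theorem the $C^0$ bound is the hardest step, and here the operator is nonconvex. (ii) The metric bounds: you cite Proposition \ref{fdotest} for the torsion-potential mechanism, but that is Proposition \ref{mub}, and more importantly Proposition \ref{mub} only gives an \emph{upper} bound for the metric \emph{assuming} a lower bound $g \geq K^{-1} h$. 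In the paper that lower bound comes from Lemma \ref{halfdetev} plus Proposition \ref{fdotest}, which require $n=2$, $\rank_{J_\pm}=1$, and a sign hypothesis $c_1(T^{\mathbb C}_\pm M) \leq 0$ (Theorem \ref{halfample}) --- this is precisely why Theorem \ref{mainthm2} excludes ruled surfaces over $\mathbb{CP}^1$ and Hopf surfaces. Your proposal imposes no such hypotheses and offers no substitute. (iii) The $C^{2,\ga}$ estimate: Theorem \ref{EKregularity} is a parabolic statement, proved by a blowup to an \emph{ancient} solution of the flow and a Liouville theorem; a solution of $P(\gw_f) = \phi$ with $\phi \neq 0$ is not a static solution of the pluriclosed flow, so transferring that theorem to your elliptic equation requires an argument (e.g. viewing $\gw_f$ as a fixed point of the modified flow $\dt \gw = -P(\gw) + \phi$ and redoing the blowup analysis), which you do not supply. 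Until (i)--(iii) are established, $S$ being closed is unproven even for $n=2$, and the conjecture remains open, as the paper intends.
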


If a generalized K\"ahler structure satisfies $J_+ = \pm J_-$, then in fact
$(M,J_{\pm})$ are both K\"ahler manifolds, and generalized K\"ahler metrics on
$(M, J_{\pm})$ are the same as K\"ahler metrics on the individual complex
manifolds.  In particular, the whole discussion above reduces to the well-known
setting of K\"ahler-Ricci flow, and Conjecture \ref{calconj} reduces to the
famous Calabi conjecture \cite{Calabiconj, CC2}, resolved by Yau \cite{Yau}.

\section{Evolution Equations} \label{eveq}

In this section we record a number of evolution equations for some relevant
geometric quantities along a solution to pluriclosed flow in this context. 
Moreover we introduce the ``torsion potential" as remarked upon in the
introduction, and exhibit its remarkably simple evolution equation.  To begin
let us record some useful definitions and notational conventions.  First, we
will let $g$ denote a solution to pluriclosed flow as in Theorem \ref{mainthm1}.
 We will denote the induced Hermitian metrics on $T^{\mathbb C}_{\pm}$ as
$g_{\pm}$, respectively.  We use $h$ to denote an arbitrary fixed background
metric, with corresponding decomposition $h = h_+ + h_-$.  For lemmas involving
the potential function $f$ we assume that $\til{g}$ is a smooth one parameter
family of generalized K\"ahler metrics, and
\begin{align*}
\gw = \til{\gw} + \left( \gd_+ \gd^c_+ f - \gd_- \gd^c_- f \right),
\end{align*}
where $\til{\gw} = \gw_0 - t P(h)$ and $f$ evolves according to
(\ref{scalarflow}). 
The associated metric is denoted $g$.  The notation $\gD$ will refer to the
Chern Laplacian, acting via
\begin{align*}
\gD A = g^{\bj i} \N_i \N_{\bj} A,
\end{align*}
where $A$ is a section of some tensor bundle and $\N$ are the induced Chern
connections on these bundles.  Also, $\gG^{g_{\pm}}$ will refer to the local
coefficients of the Chern connection on $T^{\mathbb C}_{\pm}$ associated to the
metrics $g_{\pm}$, with $\gG^{h_{\pm}}$ defined analogously.  In this context we
let
\begin{align*}
\gU(g_{\pm},h_{\pm}) := \N^{g_{\pm}} - \N^{h_{\pm}}
\end{align*}
denote the tensor representing the difference of these two connections.  This
tensor arises in the evolution equations below, as do different ways to measure
its size.  In particular, as a section of $\Lambda^{1,0} \otimes \Lambda^{1,0}
\otimes T^{1,0}$, one can use different metrics to contract each
pair of indices.  We will denote these three choices as subscripts.  For
instance,
\begin{align*}
\brs{\gU(g_+,h_+)}^2_{g^{-1},g^{-1},h} = g^{\bj i} g^{\bgb_+ \ga_+} h_{\mu_+
\bnu_+} \gU_{i \ga_+}^{\mu_+} \bar{\gU}_{\bj \bgb_+}^{\bnu_+}.
\end{align*}
\begin{lemma} \label{halfdetev} Given the setup above,
\begin{gather}
 \begin{split}
  \dt \log \frac{ \det g_{\pm}}{\det h_{\pm}} =&\ \gD \log \frac{\det
g_{\pm}}{\det h_{\pm}} +
\frac{1}{2} \brs{T}^2 - \tr_g \rho(h_{\pm}).
 \end{split}
\end{gather}
\begin{proof} We give the proof for the case of $g_+$, the other case being
analogous.  To begin we compute
\begin{align*}
   \dt \log \frac{ \det g_+}{\det h_+} =&\ g^{\bgb_+ \ga_+} \dot{g}_{\ga_+
\bgb_+}\\
=&\ g^{\bgb_+ \ga_+} \left[ g^{\bq p} g_{\ga_+ \bgb_+,p\bq} + g^{\bq r} g^{\bs
p} \left( g_{r \bs,\bgb_+} g_{p\bq,\ga_+} - g_{r\bs,\bgb_+} g_{\ga_+ \bq,p} -
g_{r\bs,\ga_+} g_{p\bgb_+,\bq} \right) \right].
\end{align*}
Next we observe
\begin{align*}
 \gD \log \frac{\det g_+}{\det h_+} =&\ g^{\bq p} \left( \log \frac{\det
g_+}{\det h_+} \right)_{,\bq p}\\
 =&\ g^{\bq p} \left( g^{\bgb_+ \ga_+} g_{\ga_+ \bgb_+,\bq} - h^{\bgb_+ \ga_+}
h_{\ga_+ \bgb_+,\bq} \right)_{,p}\\
 =&\ g^{\bq p} \left( g^{\bgb_+ \ga_+} g_{\ga_+ \bgb_+,p\bq} - g^{\bgb_+ \gd_+}
g_{\gd_+ \bgg_+,p} g^{\bgg_+ \ga_+} g_{\ga_+\bgb_+,\bq} + \rho(h_+)_{p\bq}
\right).
\end{align*}
It follows that
\begin{align*}
 \dt \log \frac{\det g_+}{\det h_+} =&\ \gD \log \frac{\det g_+}{\det h_+} +
g^{\bgb_+ \ga_+} g^{\bq r} g^{\bs p} \left[ g_{r \bs,\bgb_+} g_{p\bq,\ga_+} -
g_{r\bs,\bgb_+} g_{\ga_+ \bq,p} -
g_{r\bs,\ga_+} g_{p\bgb_+,\bq} \right]\\
&\ + g^{\bq p} g^{\bgb_+ \gd_+} g^{\bgg_+ \ga_+} g_{\gd_+\bgg_+,p} g_{\ga_+
\bgb_+,\bq} - \tr_g \rho(h_+)\\
=&\ \gD \log \frac{\det g_+}{\det h_+} + \sum_{i=1}^5 A_i,
\end{align*}
where the terms $A_i$ are defined by the equality.  Next we observe that
\begin{align*}
 A_1 + A_4 =&\ g^{\bgb_+ \ga_+} g^{\bq r} g^{\bs p} g_{r \bs,\bgb_+}
g_{p\bq,\ga_+} + g^{\bq p} g^{\bgb_+ \gd_+} g^{\bgg_+ \ga_+} g_{\gd_+ \bgg_+,p}
g_{\ga_+ \bgb_+,\bq}\\
 =&\ g^{\bgb_+ \ga_+} g^{\bgg_+ \gd_+} g^{\bgs_+ \rho_+} g_{\gd_+ \bgs_+,\bgb_+}
g_{\rho_+ \bgg_+,\ga_+} + g^{\bgb_+ \ga_+} g^{\bgg_- \gd_-} g^{\bgs_- \rho_-}
g_{\gd_- \bgs_-,\bgb_+} g_{\rho_- \bgg_-,\ga_+}\\
&\ + g^{\bnu_+ \mu_+} g^{\bgb_+ \gd_+} g^{\bgg_+ \ga_+} g_{\gd_+ \bgg_+,\mu_+}
g_{\ga_+ \bgb_+,\bnu_+} + g^{\bnu_- \mu_-} g^{\bgb_+ \gd_+} g^{\bgg_+ \ga_+}
g_{\gd_+ \bgg_+,\mu_-} g_{\ga_+ \bgb_+,\bnu_-}\\
=&\ 2 g^{\bgb_+ \ga_+} g^{\bgg_+ \gd_+} g^{\bgs_+ \rho_+} g_{\gd_+
\bgs_+,\bgb_+} g_{\rho_+ \bgg_+,\ga_+} + g^{\bgb_+ \ga_+} g^{\bgg_- \gd_-}
g^{\bgs_- \rho_-} T_{\ga_+ \rho_- \bgg_-} T_{\bgb_+ \bgs_- \gd_-}\\
&\ + g^{\bgg_+ \ga_+} g^{\bgb_+ \gd_+} g^{\bnu_- \mu_-} T_{\mu_- \gd_+ \bgg_+}
T_{\bnu_- \ga_+ \bgb_+}\\
=&\ 2 g^{\bgb_+ \ga_+} g^{\bgg_+ \gd_+} g^{\bgs_+ \rho_+} g_{\gd_+
\bgs_+,\bgb_+} g_{\rho_+ \bgg_+,\ga_+} + \frac{1}{2} \brs{T}^2.
\end{align*}
Also we have, using (\ref{metricsplit}), (\ref{partialK}),
\begin{align*}
A_2 + A_3 =&\ - g^{\bgb_+ \ga_+} g^{\bq r} g^{\bs p} \left[ g_{r\bs,\bgb_+}
g_{\ga_+ \bq,p} + g_{r\bs,\ga_+} g_{p\bgb_+,\bq} \right]\\
=&\ - g^{\bgb_+ \ga_+} g^{\bgg_+ \gd_+} g^{\bgs_+ \rho_+} \left[ g_{\gd_+
\bgs_+,\bgb_+} g_{\ga_+ \bgg_+,\rho_+} + g_{\gd_+ \bgs_+,\ga_+} g_{\rho_+
\bgb_+,\bgg_+} \right]\\
=&\ - 2 g^{\bgb_+ \ga_+} g^{\bgg_+ \gd_+} g^{\bgs_+ \rho_+} g_{\rho_+
\bgg_+,\ga_+} g_{\gd_+ \bgs_+ \bgb_+}.
\end{align*}
Combining the above calculations yields the result.
\end{proof}
\end{lemma}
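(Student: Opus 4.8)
The plan is to compute the time derivative and the Chern Laplacian of $\log \frac{\det g_+}{\det h_+}$ independently and then compare them; the statement for $g_-$ is symmetric, so I would treat only the $+$ case. Since $h_+$ is a fixed background metric, the time derivative is just $g^{\bgb_+ \ga_+} \dot g_{\ga_+ \bgb_+}$. Substituting the pluriclosed flow equation together with the local coordinate formula for $P(\gw)_{i\bj}$ recorded above (from \cite{ST1}) produces a second order piece $g^{\bgb_+ \ga_+} g^{\bq p} g_{\ga_+ \bgb_+, p \bq}$ together with a collection of terms quadratic in the first derivatives of $g$.

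Next I would compute $\gD \log \frac{\det g_+}{\det h_+}$ by writing the log-determinant as $g^{\bgb_+ \ga_+} g_{\ga_+ \bgb_+, \bq} - h^{\bgb_+ \ga_+} h_{\ga_+ \bgb_+, \bq}$, differentiating once more and contracting with $g^{\bq p}$. Differentiating the inverse metric reproduces exactly the second order term appearing in the time derivative, yields one further quadratic term, and --- via the transgression formula (\ref{trans}) applied to the \emph{background} metric $h_+$ --- contributes precisely $\rho(h_+)_{p \bq}$, which contracts to $- \tr_g \rho(h_+)$ once moved to the other side. Subtracting, the second order pieces cancel identically, so that $\left( \dt - \gD \right) \log \frac{\det g_+}{\det h_+}$ equals a sum of five explicit quadratic terms $A_1, \dots, A_5$ plus $- \tr_g \rho(h_+)$.

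The heart of the argument is then to show $\sum_i A_i = \frac{1}{2} \brs{T}^2$, and this is exactly where the structure of the split tangent bundle must be exploited. Grouping $A_1 + A_4$ and splitting each contraction according to whether the repeated indices run over $T^{\mathbb C}_+ M$ or $T^{\mathbb C}_- M$, the purely $+$-type terms assemble into a single symmetric expression, while the mixed terms --- after using (\ref{metricsplit}) to discard off-diagonal metric components --- are recognized through the torsion formula (\ref{torsionform}) as precisely $\frac{1}{2} \brs{T}^2$. In parallel, grouping $A_2 + A_3$ and applying the partial K\"ahler condition (\ref{partialK}) to symmetrize the relevant first-derivative indices gives exactly the negative of the symmetric $+$-type expression coming from $A_1 + A_4$. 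These cancel, leaving $\frac{1}{2} \brs{T}^2 - \tr_g \rho(h_+)$, as desired.

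The step I expect to be most delicate is the bookkeeping in $A_1 + A_4$: one must carefully track which differentiation directions are tangent to $T_+$ versus $T_-$, since it is precisely the mixed derivatives of the form $g_{\gb_- \bgg_-, \ga_+}$ that (\ref{torsionform}) converts into torsion, whereas the same-type derivatives must be arranged so as to cancel against $A_2 + A_3$. The difficulty is entirely in getting the index placement and the symmetrization correct rather than in any deeper geometric input.
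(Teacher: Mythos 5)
Your proposal is correct and follows essentially the same route as the paper's proof: compute $\dt$ and $\gD$ of $\log \frac{\det g_+}{\det h_+}$ separately, extract $-\tr_g \rho(h_+)$ from the background via the transgression formula, then group the quadratic terms as $A_1 + A_4$ and $A_2 + A_3$, using (\ref{metricsplit}) and (\ref{torsionform}) to turn the mixed-type derivatives into $\frac{1}{2}\brs{T}^2$ and (\ref{partialK}) to cancel the pure $+$-type pieces. The delicate point you flag --- tracking which derivative directions lie in $T_+$ versus $T_-$ --- is exactly where the paper's computation does its work.
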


\begin{lemma} \label{fdotev} Given the setup above, one has
\begin{align*}
 \dt \dot{f} =&\ \gD \dot{f} - \tr_{g_+} P(h_{\pm}) + \tr_{g_-} P(h_{\pm}).
\end{align*}
\begin{proof}
 We directly compute using Lemma \ref{halfdetev}
\begin{align*}
 \dt \frac{\del f}{\del t} =&\ \dt \log \frac{\det g_+ \det h_{-}}{\det h_{+}
\det g_-}\\
=&\ \gD \log \frac{\det g_+}{\det h_+} + \frac{1}{2} \brs{T}^2 - \tr_g \rho(h_+)
- \gD \log \frac{\det g_-}{\det h_-} - \frac{1}{2} \brs{T}^2 + \tr_g \rho(h_-)\\
 =&\ \gD \dot{f} + \tr_g \left( \rho(h_-) - \rho(h_+) \right)\\
=&\ \gD \dot{f} + \tr_{g_+} \left( \rho_-^+ - \rho_+^+ \right) + \tr_{g_-}
\left( \rho_-^- - \rho_+^- \right)\\
=&\ \gD \dot{f} - \tr_{g_+} P(h_{\pm}) + \tr_{g_-} P(h_{\pm}).
\end{align*}
\end{proof}
\end{lemma}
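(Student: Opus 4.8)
The plan is to differentiate the defining equation (\ref{scalarflow}) for $f$ and feed the result directly into Lemma \ref{halfdetev}. First I would rewrite the flow equation as
\begin{align*}
\dot{f} = \log \frac{\det g_+}{\det h_+} - \log \frac{\det g_-}{\det h_-},
\end{align*}
so that differentiating in $t$ and substituting the two instances of Lemma \ref{halfdetev} (one for $g_+$ and one for $g_-$) yields
\begin{align*}
\dt \dot{f} = \gD \left( \log \frac{\det g_+}{\det h_+} - \log \frac{\det g_-}{\det h_-} \right) + \frac{1}{2} \brs{T}^2 - \frac{1}{2} \brs{T}^2 - \tr_g \rho(h_+) + \tr_g \rho(h_-).
\end{align*}
The crucial simplification is that the torsion terms $\frac{1}{2}\brs{T}^2$ appear identically in both evolution equations and hence cancel exactly, while the two Chern--Laplacian terms recombine by linearity into $\gD \dot{f}$. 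This is what reduces the genuinely complicated evolution of the full determinant to a clean equation.

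It then remains to identify $\tr_g(\rho(h_-) - \rho(h_+))$ with $-\tr_{g_+} P(h_{\pm}) + \tr_{g_-} P(h_{\pm})$. Here I would use the orthogonal splitting: by (\ref{metricsplit}) the metric $g$ is block diagonal with respect to $TM = T_+M \oplus T_-M$, so for any real $(1,1)$-form the trace $\tr_g$ splits as $\tr_{g_+}$ acting on the $\Lambda^{1,0}_+ \wedge \Lambda^{0,1}_+$ piece plus $\tr_{g_-}$ acting on the $\Lambda^{1,0}_- \wedge \Lambda^{0,1}_-$ piece, with all mixed components annihilated. Decomposing each Chern form into its type pieces in the notation preceding Proposition \ref{curvid}, this gives
\begin{align*}
\tr_g \rho(h_+) =&\ \tr_{g_+} \rho^+(h_+) + \tr_{g_-} \rho^-(h_+),\\
\tr_g \rho(h_-) =&\ \tr_{g_+} \rho^+(h_-) + \tr_{g_-} \rho^-(h_-).
\end{align*}

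Finally I would match these against the definition $P(h_{\pm}) = \rho^+(h_+) - \rho^+(h_-) - \rho^-(h_+) + \rho^-(h_-)$ from Lemma \ref{reductionlemma}. Since $\tr_{g_+}$ sees only the $+$-type pieces and $\tr_{g_-}$ only the $-$-type pieces, one reads off
\begin{align*}
\tr_{g_+} P(h_{\pm}) =&\ \tr_{g_+} \rho^+(h_+) - \tr_{g_+} \rho^+(h_-),\\
\tr_{g_-} P(h_{\pm}) =&\ - \tr_{g_-} \rho^-(h_+) + \tr_{g_-} \rho^-(h_-),
\end{align*}
and comparing term by term shows $\tr_g(\rho(h_-) - \rho(h_+)) = - \tr_{g_+} P(h_{\pm}) + \tr_{g_-} P(h_{\pm})$, which completes the proof. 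I do not expect any genuine analytic obstacle: the entire content is the cancellation of $\brs{T}^2$ together with the bookkeeping of the block decomposition, both resting on Lemma \ref{halfdetev} and the orthogonality relation (\ref{metricsplit}). The only point demanding care is keeping the two distinct roles of the decorations $\pm$ — bundle subscript versus form-type superscript — straight throughout.
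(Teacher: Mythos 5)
Your proposal is correct and follows essentially the same route as the paper's own proof: apply Lemma \ref{halfdetev} to both $g_+$ and $g_-$, observe the exact cancellation of the $\frac{1}{2}\brs{T}^2$ terms and the recombination of the Laplacians, then use the block-diagonal structure of $g$ from (\ref{metricsplit}) to split $\tr_g$ into $\tr_{g_+}$ and $\tr_{g_-}$ acting on the type pieces $\rho^{\pm}(h_{\pm})$ and match against the definition of $P(h_{\pm})$. The only difference is that you spell out the trace-splitting bookkeeping explicitly, which the paper compresses into a single line.
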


\begin{lemma} \label{inversemetricev} Given the setup above,
\begin{gather*}
 \begin{split}
  \dt \tr_{g_{\pm}} h_{\pm} =&\ \gD \tr_{g_{\pm}} h_{\pm} -
\brs{\gU(g_{\pm},h_{\pm})}^2_{g^{-1},g^{-1},h} - \tr
g_{\pm}^{-1} h_{\pm} g^{-1}_{\pm} Q + g^{\bq p} g^{\bgb_{\pm} \ga_{\pm}} \left(
\Omega^{h_{\pm}}
\right)_{p \bq \ga_{\pm} \bgb_{\pm}}.
 \end{split}
\end{gather*}
\begin{proof} Once again we only deal with the case of $\tr_{g_+} h_+$.  To
begin we compute
\begin{align*}
 \dt \tr_{g_+} h_+ =&\ \dt \left( g^{\bgb_+ \ga_+} h_{\ga_+ \bgb_+} \right)\\
 =&\ - g^{\bgb_+ \gd_+} \dot{g}_{\gd_+ \bgg_+} g^{\bgg_+ \ga_+} h_{\ga_+
\bgb_+}\\
 =&\ - g^{\bgb_+ \gd_+} \left[ g^{\bq p} g_{\gd_+ \bgg_+,p\bq} + g^{\bq r}
g^{\bs p}
\left( g_{r\bs,\bgg_+} g_{p\bq,\gd_+} - g_{r\bs,\bgg_+} g_{\gd_+ \bq,p} -
g_{r\bs,\gd_+}
g_{p\bgg_+,\bq} \right) \right] g^{\bgg_+ \ga_+} h_{\ga_+ \bgb_+}.
\end{align*}
Next we observe
\begin{align*}
 \gD \tr_{g_+} h_+ =&\ g^{\bq p} \left( g^{\bgb_+ \ga_+} h_{\ga_+ \bgb_+}
\right)_{,\bq p}\\
 =&\ g^{\bq p} \left( - g^{\bgb_+ \gd_+} g_{\gd_+ \bgg_+,\bq} g^{\bgg_+ \ga_+}
h_{\ga_+ \bgb_+} + g^{\bgb_+ \ga_+} h_{\ga_+ \bgb_+,\bq} \right)_{,p}\\
 =&\ g^{\bq p} \left( g^{\bgb_+ \rho_+} g_{\rho_+ \bgs_+,p} g^{\bgs_+ \gd_+}
g_{\gd_+ \bgg_+,\bq} g^{\bgg_+ \ga_+} h_{\ga_+ \bgb_+} - g^{\bgb_+ \gd_+}
g_{\gd_+ \bgg_+,p\bq} g^{\bgg_+ \ga_+} h_{\ga_+ \bgb_+} \right.\\
 &\ \qquad + g^{\bgb_+ \gd_+} g_{\gd_+ \bgg_+,\bq} g^{\bgg_+ \rho_+} g_{\rho_+
\bgs_+,p} g^{\bgs_+ \ga_+} h_{\ga_+ \bgb_+} - g^{\bgb_+ \gd_+} g_{\gd_+
\bgg_+,\bq} g^{\bgg_+ \ga_+} h_{\ga_+ \bgb_+,p}\\
 &\ \left. \qquad - g^{\bgb_+ \gd_+} g_{\gd_+ \bgg_+,p} g^{\bgg_+ \ga_+}
h_{\ga_+ \bgb_+,\bq} + g^{\bgb_+ \ga_+} h_{\ga_+ \bgb_+,p\bq} \right).
\end{align*}
Combining these yields
\begin{align*}
\dt \tr_{g_+} h_+ =&\ \gD \tr_{g_+} h_+ - g^{\bgb_+ \gd_+} g^{\bq r} g^{\bs p}
g^{\bgg_+ \ga_+} h_{\ga_+ \bgb_+} \left[ g_{r\bs,\bgg_+} g_{p\bq,\gd_+} -
g_{r\bs,\bgg_+} g_{\gd_+ \bq,p} -
g_{r\bs,\gd_+}
g_{p\bgg_+,\bq} \right]\\
&\ - g^{\bq p} \left( g^{\bgb_+ \rho_+} g_{\rho_+ \bgs_+,p} g^{\bgs_+ \gd_+}
g_{\gd_+ \bgg_+,\bq} g^{\bgg_+ \ga_+} h_{\ga_+ \bgb_+}  \right.\\
 &\ \qquad + g^{\bgb_+ \gd_+} g_{\gd_+ \bgg_+,\bq} g^{\bgg_+ \rho_+} g_{\rho_+
\bgs_+,p} g^{\bgs_+ \ga_+} h_{\ga_+ \bgb_+} - g^{\bgb_+ \gd_+} g_{\gd_+
\bgg_+,\bq} g^{\bgg_+ \ga_+} h_{\ga_+ \bgb_+,p}\\
 &\ \left. \qquad - g^{\bgb_+ \gd_+} g_{\gd_+ \bgg_+,p} g^{\bgg_+ \ga_+}
h_{\ga_+ \bgb_+,\bq} + g^{\bgb_+ \ga_+} h_{\ga_+ \bgb_+,p\bq} \right)\\
=&\ \gD \tr_{g_+} h_+ + \sum_{i=1}^{7} A_i.
\end{align*}
We simplify some terms.  First, using (\ref{metricsplit}) and (\ref{partialK})
we have:
\begin{align*}
 A_1 =&\ - g^{\bgb_+ \gd_+} g^{\bq r} g^{\bs p}
g^{\bgg_+ \ga_+} h_{\ga_+ \bgb_+} g_{r\bs,\bgg_+} g_{p\bq,\gd_+}\\
=&\ - g^{\bgb_+ \gd_+} g^{\bnu_+ \mu_+} g^{\bgs_+ \rho_+}
g^{\bgg_+ \ga_+} h_{\ga_+ \bgb_+} g_{\mu_+ \bgs_+,\bgg_+} g_{\rho_+
\bnu_+,\gd_+} - g^{\bgb_+ \gd_+} g^{\bnu_- \mu_-} g^{\bgs_- \rho_-}
g^{\bgg_+ \ga_+} h_{\ga_+ \bgb_+} g_{\mu_- \bgs_-,\bgg_+} g_{\rho_-
\bnu_-,\gd_+}\\
=&\ - g^{\bgb_+ \gd_+} g^{\bnu_+ \mu_+} g^{\bgs_+ \rho_+}
g^{\bgg_+ \ga_+} h_{\ga_+ \bgb_+} g_{\mu_+ \bgg_+,\bgs_+} g_{\gd_+
\bnu_+,\rho_+} - g^{\bgb_+ \gd_+} g^{\bnu_- \mu_-} g^{\bgs_- \rho_-}
g^{\bgg_+ \ga_+} h_{\ga_+ \bgb_+} T_{\gd_+ \rho_- \bnu_-} T_{\bgg_+ \bgs_-
\mu_-}.
\end{align*}
Next
\begin{align*}
 A_2 + A_3 =&\ g^{\bgb_+ \gd_+} g^{\bq r} g^{\bs p}
g^{\bgg_+ \ga_+} h_{\ga_+ \bgb_+} \left[ g_{r\bs,\bgg_+} g_{\gd_+ \bq,p} +
g_{r\bs,\gd_+} g_{p\bgg_+,\bq} \right]\\
=&\ g^{\bgb_+ \gd_+} g^{\bnu_+ \mu_+} g^{\bgs_+ \rho_+}
g^{\bgg_+ \ga_+} h_{\ga_+ \bgb_+} \left[ g_{\mu_+ \bgs_+,\bgg_+} g_{\gd_+
\bnu_+,\rho_+} + g_{\mu_+ \bgs_+,\gd_+} g_{\rho_+ \bgg_+,\bnu_+} \right]\\
=&\ g^{\bgb_+ \gd_+} g^{\bnu_+ \mu_+} g^{\bgs_+ \rho_+}
g^{\bgg_+ \ga_+} h_{\ga_+ \bgb_+} \left[ g_{\mu_+ \bgg_+,\bgs_+} g_{\gd_+
\bnu_+,\rho_+} + g_{\gd_+ \bgs_+,\mu_+} g_{\rho_+ \bgg_+,\bnu_+} \right]\\
=&\ 2 g^{\bgb_+ \gd_+} g^{\bnu_+ \mu_+} g^{\bgs_+ \rho_+}
g^{\bgg_+ \ga_+} h_{\ga_+ \bgb_+} g_{\mu_+ \bgg_+,\bgs_+} g_{\gd_+
\bnu_+,\rho_+}.
\end{align*}
It follows that
\begin{align*}
 \sum_{i=1}^4 A_i =&\ g^{\bgb_+ \gd_+} g^{\bnu_+ \mu_+} g^{\bgs_+ \rho_+}
g^{\bgg_+ \ga_+} h_{\ga_+ \bgb_+} g_{\mu_+ \bgg_+,\bgs_+} g_{\gd_+
\bnu_+,\rho_+} - g^{\bgb_+ \gd_+} g^{\bnu_- \mu_-} g^{\bgs_- \rho_-}
g^{\bgg_+ \ga_+} h_{\ga_+ \bgb_+} T_{\gd_+ \rho_- \bnu_-} T_{\bgg_+ \bgs_-
\mu_-}\\
&\ - g^{\bq p} g^{\bgb_+ \rho_+} g^{\bgs_+ \gd_+} g^{\bgg_+ \ga_+} h_{\ga_+
\bgb_+} g_{\rho_+ \bgs_+,p} g_{\gd_+ \bgg_+,\bq}\\
=&\ g^{\bgb_+ \gd_+} g^{\bnu_+ \mu_+} g^{\bgs_+ \rho_+}
g^{\bgg_+ \ga_+} h_{\ga_+ \bgb_+} g_{\mu_+ \bgg_+,\bgs_+} g_{\gd_+
\bnu_+,\rho_+} - g^{\bgb_+ \gd_+} g^{\bnu_- \mu_-} g^{\bgs_- \rho_-}
g^{\bgg_+ \ga_+} h_{\ga_+ \bgb_+} T_{\gd_+ \rho_- \bnu_-} T_{\bgg_+ \bgs_-
\mu_-}\\
&\ - g^{\bnu_+ \mu_+} g^{\bgb_+ \rho_+} g^{\bgs_+ \gd_+} g^{\bgg_+ \ga_+}
h_{\ga_+ \bgb_+} g_{\rho_+ \bgs_+,\mu_+} g_{\gd_+ \bgg_+,\bnu_+} - g^{\bnu_-
\mu_-} g^{\bgb_+ \rho_+} g^{\bgs_+ \gd_+} g^{\bgg_+ \ga_+}
h_{\ga_+ \bgb_+} g_{\rho_+ \bgs_+,\mu_-} g_{\gd_+ \bgg_+,\bnu_-}\\
=&\ - g^{\bgb_+ \gd_+} g^{\bnu_- \mu_-} g^{\bgs_- \rho_-}
g^{\bgg_+ \ga_+} h_{\ga_+ \bgb_+} T_{\gd_+ \rho_- \bnu_-} T_{\bgg_+ \bgs_-
\mu_-} - g^{\bnu_-
\mu_-} g^{\bgb_+ \rho_+} g^{\bgs_+ \gd_+} g^{\bgg_+ \ga_+}
h_{\ga_+ \bgb_+} T_{\mu_- \rho_+ \bgs_+} T_{\bnu_- \bgg_+ \gd_+}\\
=&\ - \tr g_+^{-1} h_+ g_+^{-1} Q.
\end{align*}
Next we compute
\begin{align*}
 A_7 =&\ - g^{\bq p} g^{\bgb_+ \ga_+} h_{\ga_+ \bgb_+,p\bq}\\
 =&\ g^{\bq p} g^{\bgb_+ \ga_+} \left( \Omega^{h_+} \right)_{p \bq \ga_+
\bgb_+} - g^{\bq p} g^{\bgb_+ \ga_+} h^{\bnu_+ \mu_+} h_{\ga_+ \bnu_+,p}
h_{\mu_+ \bgb_+,\bq}.
\end{align*}
It follows that
\begin{align*}
 \sum_{i=5}^7 A_i =&\ - g^{\bq p} \left( g^{\bgb_+ \gd_+} g_{\gd_+ \bgg_+,\bq}
g^{\bgg_+ \rho_+} g_{\rho_+
\bgs_+,p} g^{\bgs_+ \ga_+} h_{\ga_+ \bgb_+} - g^{\bgb_+ \gd_+} g_{\gd_+
\bgg_+,\bq} g^{\bgg_+ \ga_+} h_{\ga_+ \bgb_+,p} \right.\\
 &\ \left. \qquad - g^{\bgb_+ \gd_+} g_{\gd_+ \bgg_+,p} g^{\bgg_+ \ga_+}
h_{\ga_+ \bgb_+,\bq} + g^{\bgb_+ \ga_+} h^{\bnu_+ \mu_+} h_{\ga_+ \bnu_+,p}
h_{\mu_+ \bgb_+,\bq} \right) + g^{\bq p} g^{\bgb_+ \ga_+} \left( \Omega^{h_+}
\right)_{p \bq \ga_+ \bgb_+}\\
=&\ - g^{\bq p} \left( \left( \gG^{g_+} \right)_{\bq \bgg_+}^{\bgb_+} \left(
\gG^{g_+} \right)_{p \rho_+}^{\ga_+} g^{\bgg_+ \rho_+} h_{\ga_+ \bgb_+} - \left(
\gG^{g_+} \right)_{\bq \bgg_+}^{\bgb_+} \left( \gG^{h_+} \right)_{p
\ga_+}^{\mu_+} g^{\bgg_+ \ga_+} h_{\mu_+ \bgb_+} \right.\\
&\ \qquad \left. - \left( \gG^{g_+} \right)_{p \gd_+}^{\ga_+} \left( \gG^{h_+}
\right)_{\bq \bgb_+}^{\bmu_+} g^{\bgb_+ \gd_+} h_{\ga_+ \bmu_+} + \left(
\gG^{h_+} \right)_{p\ga_+}^{\mu_+} \left( \gG^{h_+} \right)_{\bq
\bgb_+}^{\bgs_+} g^{\bgb_+ \ga_+} h_{\mu_+ \bgs_+} \right)\\
&\ \qquad + g^{\bq p} g^{\bgb_+ \ga_+} \left( \Omega^{h_+}
\right)_{p \bq \ga_+ \bgb_+}\\
=&\ - \brs{\gU(g_+,h_+)}^2_{g^{-1},g^{-1},h} + g^{\bq p} g^{\bgb_+ \ga_+} \left(
\Omega^{h_+}
\right)_{p \bq \ga_+ \bgb_+}.
\end{align*}
Collecting the above calculations yields the result.
\end{proof}
\end{lemma}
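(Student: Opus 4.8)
The plan is to prove the $\tr_{g_+} h_+$ case, the $-$ case being identical after interchanging the roles of the two foliations. Since $h_+$ is a fixed background metric, only the inverse metric evolves in $\tr_{g_+} h_+ = g^{\bgb_+ \ga_+} h_{\ga_+ \bgb_+}$, so I would first compute $\dt \tr_{g_+} h_+ = - g^{\bgb_+ \gd_+} \dot g_{\gd_+ \bgg_+} g^{\bgg_+ \ga_+} h_{\ga_+ \bgb_+}$, substituting for $\dot g_{\gd_+ \bgg_+}$ the pluriclosed flow equation together with the local coordinate formula for $P(\gw)_{i \bj}$ recorded above. This produces a single genuinely second-order term together with a collection of quadratic first-derivative terms.

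Next I would compute the Chern Laplacian $\gD \tr_{g_+} h_+ = g^{\bq p} \left( g^{\bgb_+ \ga_+} h_{\ga_+ \bgb_+} \right)_{,\bq p}$ directly, differentiating twice and carefully tracking every term generated by differentiating the inverse-metric factors. The point of doing both computations is that the leading second-order term appearing in $\gD \tr_{g_+} h_+$ matches exactly the second-order term in $\dt \tr_{g_+} h_+$, so that the difference $\dt \tr_{g_+} h_+ - \gD \tr_{g_+} h_+$ is first order: a sum of products of first derivatives of $g$, first derivatives of $h$, and the background curvature of $h_+$.

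The heart of the argument is then the simplification of these quadratic terms, for which I would repeatedly invoke the structural identities valid in adapted coordinates: the splitting $g_{\ga_+ \bgb_-} = 0$ of (\ref{metricsplit}), the partial K\"ahler conditions (\ref{partialK}), and the identification of the nonvanishing torsion components (\ref{torsionform}). The partial K\"ahler identities allow one to symmetrize ``mixed'' derivatives such as $g_{\mu_+ \bgs_+,\bgg_+}$ into $g_{\mu_+ \bgg_+,\bgs_+}$; this causes the purely $+$-directional contributions coming from the flow equation to cancel against those coming from the Laplacian. What survives of the $+$-directional terms then reorganizes into the perfect square $- \brs{\gU(g_+,h_+)}^2_{g^{-1},g^{-1},h}$ once the relevant combinations are recognized as products of the Christoffel differences $\gU = \N^{g_+} - \N^{h_+}$. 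The terms carrying a derivative in a $-$-direction are, by (\ref{torsionform}), precisely torsion components, and they assemble into $- \tr g_+^{-1} h_+ g_+^{-1} Q$. Finally, rewriting the surviving second-derivative-of-$h$ term through the curvature of the background metric produces the term $g^{\bq p} g^{\bgb_+ \ga_+} \left( \Omega^{h_+} \right)_{p \bq \ga_+ \bgb_+}$.

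I expect the main obstacle to be purely organizational: correctly pairing the numerous quadratic terms and recognizing which combinations close up into the torsion tensor $Q$, which into the perfect square $\brs{\gU(g_+,h_+)}^2_{g^{-1},g^{-1},h}$, and which cancel outright. The essential inputs that make this possible are (\ref{partialK}) and (\ref{torsionform}): without the partial K\"ahler symmetrizations the $+$-directional terms would not collapse into a square, and without (\ref{torsionform}) the cross-type terms could not be identified with $Q$.
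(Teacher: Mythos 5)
Your proposal is correct and follows essentially the same route as the paper's proof: compute $\dt \tr_{g_+} h_+$ from the coordinate formula for $P$, compute $\gD \tr_{g_+} h_+$ directly, observe the second-order terms coincide, and then use (\ref{metricsplit}), (\ref{partialK}), and (\ref{torsionform}) to cancel the purely $+$-directional flow terms against Laplacian terms, assemble the $-$-directional (torsion) terms into $-\tr g_+^{-1} h_+ g_+^{-1} Q$, recognize the remaining first-order terms as the square $-\brs{\gU(g_+,h_+)}^2_{g^{-1},g^{-1},h}$ of Christoffel differences, and convert the second derivatives of $h$ into the $\Omega^{h_+}$ term. This is precisely the organization of the paper's computation (its terms $A_1$--$A_4$ give the $Q$ term and $A_5$--$A_7$ give the square and curvature terms), so the only work remaining is the bookkeeping you already identify.
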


\begin{lemma} \label{partialmetricev} Given the setup above,
\begin{gather*}
 \begin{split}
  \dt \tr_{h_{\pm}} g_{\pm} =&\ \gD \tr_{h_+} g_{\pm} -
\brs{\gU(g_{\pm},h_{\pm})}_{g^{-1},h^{-1},g}^2 +
\tr_{h_{\pm}} Q - g^{\bj i} (h_{\pm}^{-1} g_{\pm} h_{\pm}^{-1})^{\bgg_{\pm}
\gd_{\pm}} \left(
\Omega^{h_{\pm}}
\right)_{i \bj \gd_{\pm} \bgg_{\pm}}.
 \end{split}
\end{gather*}
\begin{proof} Again we only give the proof for the case of $\tr_{h_+} g_+$, the
other case being analogous.  To begin we compute
\begin{align*}
 \dt \tr_{h_+} g_+ =&\ h^{\bgb_+ \ga_+} \left( P(h)_{\ga_+ \bgb_+} +
\dot{f}_{,\ga_+ \bgb_+} \right)\\
=&\ h^{\bgb_+ \ga_+} \left( \log \frac{ \det
g_+ \det h_-}{\det h_+ \det g_-} \right)_{,\ga_+ \bgb_+} + \tr_{h_+}
P(h)_+.
\end{align*}
Next we observe using (\ref{metricsplit}) - (\ref{pccoord}),
\begin{align*}
 \left( \log \frac{ \det g_+ \det h_-}{\det h_+ \det g_-}
\right)_{,\ga_+ \bgb_+} =&\ g^{\bnu_+ \mu_+} g_{\mu_+ \bnu_+,\ga_+\bgb_+} -
g^{\bnu_+ \mu_+} g^{\bgg_+ \gd_+} g_{\gd_+ \bnu_+,\ga_+} g_{\mu_+
\bgg_+,\bgb_+}\\
&\ - g^{\bnu_- \mu_-} g_{\mu_- \bnu_-,\ga_+ \bgb_+} + g^{\bnu_- \mu_-} g^{\bgg_-
\gd_-} g_{\gd_- \bnu_-,\ga_+} g_{\mu_- \bgg_-,\bgb_+}\\
&\ + h^{\bnu_- \mu_-} h_{\mu_- \bnu_-,\ga_+ \bgb_+} - h^{\bnu_- \mu_-} h^{\bgg_-
\gd_-} h_{\gd_- \bnu_-,\ga_+} h_{\mu_- \bgg_-,\bgb_+}\\
&\ - h^{\bnu_+ \mu_+} h_{\mu_+ \bnu_+,\ga_+\bgb_+} + h^{\bnu_+ \mu_+}
h^{\bgg_+ \gd_+} h_{\gd_+ \bnu_+,\ga_+} h_{\mu_+ \bgg_+,\bgb_+}\\
=&\ g^{\bnu_+ \mu_+} g_{\ga_+\bgb_+,\mu_+ \bnu_+} - g^{\bnu_+ \mu_+}
g^{\bgg_+ \gd_+} g_{\gd_+ \bnu_+,\ga_+} g_{\mu_+ \bgg_+,\bgb_+}\\
&\ + g^{\bnu_- \mu_-} g_{\ga_+ \bgb_+, \mu_- \bnu_-} + g^{\bnu_- \mu_-}
g^{\bgg_-
\gd_-} g_{\gd_- \bnu_-,\ga_+} g_{\mu_- \bgg_-,\bgb_+}\\
&\ + h^{\bnu_- \mu_-} h_{\mu_- \bnu_-,\ga_+,\bgb_+} - h^{\bnu_- \mu_-} h^{\bgg_-
\gd_-} h_{\gd_- \bnu_-,\ga_+} h_{\mu_- \bgg_-,\bgb_+}\\
&\ - h^{\bnu_+ \mu_+} h_{\mu_+ \bnu_+,\ga_+\bgb_+} + h^{\bnu_+ \mu_+}
h^{\bgg_+ \gd_+} h_{\gd_+ \bnu_+,\ga_+} h_{\mu_+ \bgg_+,\bgb_+}.
\end{align*}
Now we compute
\begin{align*}
\gD \tr_{h_+} g_+ =&\ g^{\bj i} \left( h^{\bgb_+ \ga_+} g_{\ga_+ \bgb_+}
\right)_{,i \bj}\\
=&\ g^{\bj i} \left( -
h^{\bgb_+ \gd_+} h_{\gd_+ \bgg_+,i} h^{\bgg_+ \ga_+} g_{\ga_+ \gb_+} + h^{\bgb_+
\ga_+} g_{\ga_+ \bgb_+,i} \right)_{,\bj}\\
=&\ g^{\bj i} \left[ h^{\bgb_+ \rho_+} h_{\rho_+ \bgs_+,\bj} h^{\bgs_+ \gd_+}
h_{\gd_+ \bgg_+,i} h^{\bgg_+ \ga_+} g_{\ga_+ \gb_+} - h^{\bgb_+ \gd_+} h_{\gd_+
\bgg_+,i\bj} h^{\bgg_+ \ga_+} g_{\ga_+ \gb_+} \right.\\
&\ + h^{\bgb_+ \gd_+} h_{\gd_+ \bgg_+,i} h^{\bgg_+ \rho_+} h_{\rho_+ \bgs_+,\bj}
h^{\bgs_+ \ga_+} g_{\ga_+ \gb_+} - h^{\bgb_+ \gd_+} h_{\gd_+ \bgg_+,i} h^{\bgg_+
\ga_+} g_{\ga_+ \bgb_+,\bj}\\
&\ \left. - h^{\bgb_+ \gd_+} h_{\gd_+ \bgg_+,\bj} h^{\bgg_+ \ga_+} g_{\ga_+
\bgb_+,i} + h^{\bgb_+ \ga_+} g_{\ga_+ \bgb_+,i\bj} \right].
\end{align*}
Combining the two above calculations yields
\begin{align*}
 h^{\bgb_+ \ga_+} &\ \left( \log \frac{ \det
g_+ \det h_-}{\det h_+ \det g_-} \right)_{,\ga_+ \bgb_+}\\
=&\ \gD \tr_{h_+} g_+ + h^{\bgb_+ \ga_+} \left[ - g^{\bnu_+ \mu_+}
g^{\bgg_+ \gd_+} g_{\gd_+ \bnu_+,\ga_+} g_{\mu_+ \bgg_+,\bgb_+} + g^{\bnu_-
\mu_-}
g^{\bgg_-
\gd_-} g_{\gd_- \bnu_-,\ga_+} g_{\mu_- \bgg_-,\bgb_+} \right.\\
&\ + h^{\bnu_- \mu_-} h_{\mu_- \bnu_-,\ga_+,\bgb_+} - h^{\bnu_- \mu_-} h^{\bgg_-
\gd_-} h_{\gd_- \bnu_-,\ga_+} h_{\mu_- \bgg_-,\bgb_+} - h^{\bnu_+ \mu_+}
h_{\mu_+ \bnu_+,\ga_+\bgb_+}\\
&\ \left. + h^{\bnu_+ \mu_+}
h^{\bgg_+ \gd_+} h_{\gd_+ \bnu_+,\ga_+} h_{\mu_+ \bgg_+,\bgb_+} \right]\\
&\ - g^{\bj i} \left[ h^{\bgb_+ \rho_+} h_{\rho_+ \bgs_+,\bj} h^{\bgs_+ \gd_+}
h_{\gd_+
\bgg_+,i} h^{\bgg_+ \ga_+} g_{\ga_+ \gb_+} - h^{\bgb_+ \gd_+} h_{\gd_+
\bgg_+,i\bj} h^{\bgg_+ \ga_+} g_{\ga_+ \bgb_+} \right.\\
&\ + h^{\bgb_+ \gd_+} h_{\gd_+ \bgg_+,i} h^{\bgg_+ \rho_+} h_{\rho_+ \bgs_+,\bj}
h^{\bgs_+ \ga_+} g_{\ga_+ \gb_+} - h^{\bgb_+ \gd_+} h_{\gd_+ \bgg_+,i} h^{\bgg_+
\ga_+} g_{\ga_+ \gb_+,\bj}\\
&\ \left. - h^{\bgb_+ \gd_+} h_{\gd_+ \bgg_+,\bj} h^{\bgg_+ \ga_+} g_{\ga_+
\bgb_+,i} \right]\\
=&\ \gD \tr_{h_+} g_+ + \sum_{i=1}^{11} A_i.
\end{align*}
Now we identify some terms.  First of all
\begin{align*}
A_3 + A_4 =&\ h^{\bgb_+ \ga_+} h^{\bnu_- \mu_-} \left[ h_{\mu_- \bnu_-,\ga_+
\bgb_+} - h^{\bgg_- \gd_-} h_{\gd_- \bnu_-,\ga_+} h_{\mu_- \bgg_-,\bgb_+}
\right]= - \tr_{h_+} \rho^+(h_-).
\end{align*}
Next we observe
\begin{align*}
 A_5 + A_6 =&\ h^{\bgb_+ \ga_+} h^{\bnu_+ \mu_+} \left[ - h_{\mu_+ \bnu_+,\ga_+
\bgb_+} + h^{\bgg_+ \gd_+} h_{\gd_+ \bnu_+,\ga_+} h_{\mu_+ \bgg_+,\bgb_+}
\right] = \tr_{h_+} \rho^+(h_+).
\end{align*}
Next
\begin{align*}
 A_8 + A_9 =&\ g^{\bj i} \left( h^{\bgb_+ \gd_+} h_{\gd_+ \bgg_+,i\bj} h^{\bgg_+
\ga_+} g_{\ga_+ \bgb_+} - h^{\bgb_+ \gd_+} h_{\gd_+ \bgg_+,i} h^{\bgg_+ \rho_+}
h_{\rho_+ \bgs_+,\bj} h^{\bgs_+ \ga_+} g_{\ga_+ \bgb_+} \right)\\
=&\ g^{\bj i} (h_+^{-1} g_+ h_+^{-1})^{\bgg_+ \gd_+} \left( h_{\gd_+
\bgg_+,i\bj} - h_{\gd_+ \bmu_+,i} h^{\bmu_+ \nu_+} h_{\nu_+ \bgg_+,\bj}
\right)\\
=&\ - g^{\bj i} (h_+^{-1} g_+ h_+^{-1})^{\bgg_+ \gd_+} \left( \Omega^{h_+}
\right)_{i \bj \gd_+ \bgg_+}.
\end{align*}
Also, using (\ref{metricsplit}) we have that
\begin{align*}
 A_2 =&\ h^{\bgb_+ \ga_+} g^{\bnu_- \mu_-} g^{\bgg_- \gd_-} g_{\gd_-
\bnu_-,\ga_+} g_{\mu_- \bgg_-,\bgb_+}\\
=&\ h^{\bgb_+ \ga_+} g^{\bnu_- \mu_-} g^{\bgg_- \gd_-} T_{\ga_+ \gd_- \bnu_-}
T_{\bgb_+ \bgg_- \mu_-}.
\end{align*}
We now consider the piece of $A_7 + A_{10} + A_{11}$ coming from the contraction
with the metric $g_-$.  In particular, we have
\begin{align*}
& \left( A_{7} + A_{10} + A_{11} \right)_-\\
=&\ g^{\bnu_- \mu_-} \left[ - h^{\bgb_+ \rho_+} h_{\rho_+ \bgs_+,\bnu_-}
h^{\bgs_+ \gd_+} h_{\gd_+ \bgg_+,\mu_-} h^{\bgg_+ \ga_+} g_{\ga_+ \bgb_+}
\right.\\
&\ \left. + h^{\bgb_+ \gd_+} h_{\gd_+ \bgg_+,\mu_-} h^{\bgg_+ \ga_+} g_{\ga_+
\bgb_+,\bnu_-} + h^{\bgb_+ \gd_+} h_{\gd_+ \bgg_+,\bnu_-}  h^{\bgg_+ \ga_+}
g_{\ga_+ \bgb_+,\mu_-} \right]\\
=&\ g^{\bnu_- \mu_-} \left[ - \left( \gG^{h_+} \right)_{\bnu_- \bgs_+}^{\bgb_+}
\left( \gG^{h_+} \right)_{\mu_- \gd_+}^{\ga_+} h^{\bgs_+ \gd_+} g_{\ga_+ \bgb_+}
+ \left( \gG^{g_+} \right)_{\bnu_- \bgb_+}^{\bgg_+} \left( \gG^{h_+}
\right)_{\mu_- \gd_+}^{\ga_+} h^{\bgb_+ \gd_+} g_{\ga_+ \bgg_+} \right.\\
&\ \qquad \qquad \left. + \left( \gG^{h_+} \right)_{\bnu_- \bgg_+}^{\bgb_+}
\left( \gG^{g_+} \right)_{\mu_- \ga_+}^{\rho_+} h^{\bgg_+ \ga_+} g_{\rho_+
\bgb_+} \right]\\
=&\ - g^{\bnu_- \mu_-} h^{\bgs_+ \gd_+} g_{\ga_+ \bgb_+} \left[ \left( \gG^{h_+}
- \gG^{g_+} \right)_{\mu_- \gd_+}^{\ga_+} \left( \gG^{h_+} - \gG^{g_+}
\right)_{\bnu_- \bgs_+}^{\bgb_+}  \right]\\
&\ + g^{\bnu_- \mu_-} h^{\bgs_+ \gd_+} g_{\ga_+ \bgb_+} g^{\bgg_+ \ga_+}
g_{\gd_+ \bgg_+,\mu_-} g^{\bgb_+ \rho_+} g_{\rho_+ \bgs_+,\bnu_-}\\
=&\ - g^{\bnu_- \mu_-} h^{\bgs_+ \gd_+} g_{\ga_+ \bgb_+} \left[ \left( \gG^{h_+}
- \gG^{g_+} \right)_{\mu_- \gd_+}^{\ga_+} \left( \gG^{h_+} - \gG^{g_+}
\right)_{\bnu_- \bgs_+}^{\bgb_+}  \right]\\
&\ + g^{\bnu_- \mu_-} h^{\bgs_+ \gd_+} g_{\gd_+ \bgb_+,\mu_-} g^{\bgb_+ \rho_+}
g_{\rho_+ \bgs_+,\bnu_-}\\
=&\ - g^{\bnu_- \mu_-} h^{\bgs_+ \gd_+} g_{\ga_+ \bgb_+} \left[ \left( \gG^{h_+}
- \gG^{g_+} \right)_{\mu_- \gd_+}^{\ga_+} \left( \gG^{h_+} - \gG^{g_+}
\right)_{\bnu_- \bgs_+}^{\bgb_+}  \right]\\
&\ + h^{\bgb_+ \ga_+} g^{\bnu_- \mu_-} g^{\bgg_+ \gd_+} T_{\mu_- \ga_+ \bgg_+}
T_{\bnu_- \bgb_+ \gd_+}.
\end{align*}
Lastly we observe
\begin{align*}
&\ A_1 + (A_7 + A_{10} + A_{11})_+\\
=&\ - h^{\bgb_+ \ga_+} g^{\bnu_+ \mu_+} g^{\bgg_+ \gd_+} g_{\gd_+ \bnu_+,\ga_+}
g_{\mu_+ \bgg_+, \bgb_+}\\
&\ + g^{\bnu_+ \mu_+} \left[ - h^{\bgb_+ \rho_+} h_{\rho_+ \bgs_+,\bnu_+}
h^{\bgs_+ \gd_+} h_{\gd_+ \bgg_+,\mu_+} h^{\bgg_+ \ga_+} g_{\ga_+ \bgb_+}
\right.\\
&\ \left. + h^{\bgb_+ \gd_+} h_{\gd_+ \bgg_+,\mu_+} h^{\bgg_+ \ga_+} g_{\ga_+
\bgb_+,\bnu_+} + h^{\bgb_+ \gd_+} h_{\gd_+ \bgg_+,\bnu_+}  h^{\bgg_+ \ga_+}
g_{\ga_+ \bgb_+,\mu_+} \right]\\
=&\ - h^{\bgb_+ \ga_+} g^{\bgg_+ \gd_+} g_{\mu_+ \bnu_+} \left( \gG^{g_+}
\right)_{\ga_+ \gd_+}^{\mu_+} \left( \gG^{g_+} \right)_{\bgb_+ \bgg_+}^{\bnu_+}
- h^{\bgs_+ \gd_+} g^{\bnu_+ \mu_+} g_{\ga_+ \bgb_+}\left( \gG^{h_+}
\right)_{\mu_+ \gd_+}^{\ga_+} \left( \gG^{h_+} \right)_{\bnu_+
\bgs_+}^{\bgb_+}\\
&\ + h^{\bgb_+ \gd_+} g^{\bnu_+ \mu_+} g_{\ga_+ \bgg_+} \left( \gG^{h_+}
\right)_{\mu_+ \gd_+}^{\ga_+} \left( \gG^{g_+} \right)_{\bnu_+ \bgb_+}^{\bgg_+}
+ h^{\bgg_+ \ga_+} g^{\bnu_+ \mu_+} g_{\gg_+ \bgb_+} \left( \gG^{g_+}
\right)_{\mu_+ \ga_+}^{\gg_+} \left( \gG^{h_+} \right)_{\bnu_+
\bgg_+}^{\bgb_+}\\
=&\ - g^{\bnu_+ \mu_+} h^{\bgs_+ \gd_+} g_{\ga_+ \bgb_+} \left[ \left( \gG^{h_+}
- \gG^{g_+} \right)^{\ga_+}_{\mu_+ \gd_+} \left( \gG^{h_+} - \gG^{g_+}
\right)_{\bnu_+ \bgs_+}^{\bgb_+} \right].
\end{align*}
Collecting up the above calculations yields the result.
\end{proof}
\end{lemma}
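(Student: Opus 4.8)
The plan is to reduce to the case of $\tr_{h_+} g_+$ (the statement for $\tr_{h_-} g_-$ following by interchanging the roles of the two foliations) and to run a direct computation in the adapted coordinates of Lemma~\ref{coordsplit}, entirely parallel to the proofs of Lemmas~\ref{halfdetev} and \ref{inversemetricev}. First I would differentiate $\tr_{h_+} g_+ = h^{\bgb_+ \ga_+} g_{\ga_+ \bgb_+}$ in $t$; since $h_+$ is fixed this is $h^{\bgb_+ \ga_+} \dt g_{\ga_+ \bgb_+}$. Writing $\gw = \til{\gw} + (\gd_+ \gd^c_+ - \gd_- \gd^c_-) f$ with $\til{\gw} = \gw_0 - t P(h)$, the $(++)$-component expresses $\dt g_{\ga_+ \bgb_+}$ as a background piece together with $\dot{f}_{,\ga_+ \bgb_+}$, and invoking the scalar flow equation~(\ref{scalarflow}) to substitute $\dot f = \log \frac{\det g_+ \det h_-}{\det h_+ \det g_-}$ produces $\dt \tr_{h_+} g_+ = h^{\bgb_+ \ga_+}\bigl(\log \frac{\det g_+ \det h_-}{\det h_+ \det g_-}\bigr)_{,\ga_+ \bgb_+} + \tr_{h_+} P(h)_+$, thereby isolating a background Chern-form term.

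Next I would expand the mixed Hessian $\bigl(\log \frac{\det g_+ \det h_-}{\det h_+ \det g_-}\bigr)_{,\ga_+ \bgb_+}$ using the standard formula for second derivatives of $\log \det$, splitting each trace into its $+$- and $-$-block and simplifying throughout with (\ref{metricsplit})--(\ref{pccoord}): orthogonality of the splitting removes the off-diagonal metric components, the partial K\"ahler condition (\ref{partialK}) lets me interchange a holomorphic derivative index with a foliation index, and the pluriclosed identity (\ref{pccoord}) converts a $g_{\mu_- \bnu_-, \ga_+ \bgb_+}$ into $-g_{\ga_+ \bgb_+, \mu_- \bnu_-}$. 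In parallel I would expand the Chern Laplacian $\gD \tr_{h_+} g_+ = g^{\bj i} (h^{\bgb_+ \ga_+} g_{\ga_+ \bgb_+})_{,i \bj}$, distributing both derivatives over $h^{-1}$ and over $g$. Subtracting, the two genuine second-order terms $g^{\bnu_\pm \mu_\pm} g_{\ga_+ \bgb_+, \mu_\pm \bnu_\pm}$ coincide and cancel, leaving a sum of first-order (Christoffel-squared) and background-curvature terms that I would organize into the eleven pieces $A_1, \dots, A_{11}$ below.

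The core of the proof is the identification of these eleven pieces. The terms $A_8 + A_9$, in which the two derivatives both land on $h_+$, assemble via (\ref{trans}) into the background curvature term $-g^{\bj i} (h_+^{-1} g_+ h_+^{-1})^{\bgg_+ \gd_+} (\Omega^{h_+})_{i \bj \gd_+ \bgg_+}$. The remaining pure-$h$ contributions $A_3 + A_4 = - \tr_{h_+} \rho^+(h_-)$ and $A_5 + A_6 = \tr_{h_+} \rho^+(h_+)$ combine with the $\tr_{h_+} P(h)_+$ term from the first step and cancel, since by Proposition~\ref{curvid} the $(++)$-block of $P(h)$ is precisely $\rho^+(h_+) - \rho^+(h_-)$. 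The substantive work is in the remaining first-order terms: using (\ref{torsionform}) and the antisymmetry of $T$ in its two holomorphic slots, the cross-block pieces, namely $A_2$ together with the $g_-$-contracted part of $A_7 + A_{10} + A_{11}$, reorganize into $\tr_{h_+} Q$ (note that $Q_{\ga_+ \bgb_+}$ genuinely contains both an all-$(-)$ and a mixed $(-)/(+)$ internal contraction, corresponding to the two nonvanishing torsion types), while the pure-$(+)$ pieces, $A_1$ together with the $g_+$-contracted part of $A_7 + A_{10} + A_{11}$, complete to the perfect square $-\brs{\gU(g_+,h_+)}^2_{g^{-1},h^{-1},g}$ in the connection difference $\gU = \N^{g_+} - \N^{h_+}$.

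I expect the main obstacle to be precisely this last reorganization: the delicate index bookkeeping needed to see that all the products of first derivatives of $g$ and $h$ collapse exactly into $Q$, $\gU$, and $\Omega^{h_+}$ with no residue. The essential device is the repeated use of the partial K\"ahler symmetry (\ref{partialK}) to swap a derivative index with a foliation index (for instance rewriting $g_{\mu_+ \bgs_+, \bgg_+}$ as $g_{\mu_+ \bgg_+, \bgs_+}$) before the mixed terms will regroup; without these substitutions the $(+)$-block terms fail to complete to $\brs{\gU}^2$ and the cross terms fail to close up to $Q$. Tracking which dummy indices range over $T^{\mathbb C}_+$ versus $T^{\mathbb C}_-$, and correctly recognizing that the foliation-direction derivatives of $g$ appearing in the cross terms are torsion components via (\ref{torsionform}), is where all the care lies; once the eleven groups are identified, collecting them yields the stated evolution equation, and the $-$ case is identical.
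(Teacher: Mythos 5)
Your proposal is correct and follows essentially the same route as the paper's proof: differentiate in time via the scalar reduction, expand the $\log\det$ Hessian and the Chern Laplacian in the adapted coordinates, cancel the second-order terms using (\ref{partialK}) and (\ref{pccoord}), and group the remaining eleven pieces into $\tr_{h_+} Q$, $-\brs{\gU(g_+,h_+)}^2_{g^{-1},h^{-1},g}$, the $\Omega^{h_+}$ term, and the $\rho$-terms cancelling against the background $P(h)_+$ contribution. The only bookkeeping discrepancy is your attribution of a clean split (cross-block terms $\to \tr_{h_+}Q$, pure-$(+)$ terms $\to -\brs{\gU}^2$): in the actual computation the $g_-$-contracted part of $A_7+A_{10}+A_{11}$ contributes simultaneously to the mixed torsion piece of $\tr_{h_+}Q$ \emph{and} to the $(-)$-block of $-\brs{\gU(g_+,h_+)}^2_{g^{-1},h^{-1},g}$ (whose first index is contracted with the full $g^{-1}$, not just its $(+)$-block), a detail that resolves itself once the indices are tracked as you indicate.
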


As we remarked in \S \ref{reduction}, the tensor playing the role of the
$1$-form version of pluriclosed flow (see \cite{ST2}) is
\begin{align*}
 \ga = \frac{\i}{2} \left( \delb^+ - \delb^- \right) f.
\end{align*}
It follows that the torsion potential takes the form
\begin{align*}
 \del \bar{\ga} = \left( \del^+ + \del^- \right) \frac{\i}{2} \left( \del^+ -
\del^- \right) f = \i \del^- \del^+ f.
\end{align*}
This tensor obeys a remarkable evolution equation.  While the term $\Psi$ in the
Lemma below looks
formidable, observe that it is at worst linear in the flowing connection, and
moreover completely vanishes against a K\"ahler background.

\begin{lemma} \label{torsionpotentialev} Given the above setup,
 \begin{align*}
  \dt \del_- \del_+ f =&\ \gD \del_- \del_+ f + \Psi,
 \end{align*}
 where
 \begin{gather} \label{psidef}
  \begin{split}
   \Psi_{\mu_+ \mu_-} :=&\ \left( \tr_{h_-} \N^h T^h \right)_{\mu_+ \mu_-} -
h^{\bgb_- \ga_-} h^{\bgs_+ \gg_+} T^h_{\mu_+ \mu_- \bgs_+} T^h_{\gg_+ \ga_-
\bgb_-}\\
  &\ - \left( \tr_{h_+} \N^h T^h \right)_{\mu_+ \mu_-} + h^{\bgb_+ \ga_+}
h^{\bgs_- \gg_-} T^h_{\mu_+ \mu_- \bgs_-} T^h_{\gg_- \ga_+ \bgb_+}\\
 &\ - \left( \tr_{g_-} \N^g \til{T} \right)_{\mu_+ \mu_-} + g^{\bgb_- \ga_-}
g^{\bgs_+ \gg_+} T_{\mu_- \mu_+ \bgs_+}
\til{T}_{\gg_+ \ga_- \bgb_-}\\
&\ + \left( \tr_{g_+} \N^g \til{T} \right)_{\mu_+ \mu_-} + g^{\bgb_- \ga_-}
g^{\bgs_+ \gg_+} T_{\mu_- \mu_+ \bgs_+}
\til{T}_{\gg_+ \ga_- \bgb_-}\\
&\ + g^{\bgb_- \ga_-} g^{\bgg_+ \gd_+} T_{\ga_- \mu_+ \bgg_+}
\til{T}_{\gd_+ \mu_- \bgb_-} - g^{\bgb_+ \ga_+} g^{\bgg_- \gd_-} T_{\ga_+ \mu_-
\bgg_-} \til{T}_{\gd_- \mu_+ \bgb_+}.
  \end{split}
 \end{gather}

 \begin{proof} To begin we compute
 \begin{align*}
\frac{\del}{\del t} f_{,\mu_+ \mu_-} =&\ \left( \log \frac{ \det g_+ \det
h_-}{\det h_+ \det g_-} \right)_{,\mu_+ \mu_-}\\
=&\ \left( g^{\bgb_+ \ga_+} g_{\ga_+ \bgb_+,\mu_+} - g^{\bgb_- \ga_-} g_{\ga_-
\bgb_-,\mu_+} + h^{\bgb_- \ga_-} h_{\ga_- \bgb_-,\mu_+} - h^{\bgb_+ \ga_+}
h_{\ga_+ \bgb_+,\mu_+} \right)_{,\mu_-}\\
=&\ g^{\bgb_+ \ga_+} g_{\ga_+ \bgb_+,\mu_+ \mu_-} - g^{\bgb_+ \gd_+} g^{\bgg_+
\ga_+} g_{\gd_+ \bgg_+,\mu_-} g_{\ga+ \bgb_+,\mu_+}\\
&\ - g^{\bgb_- \ga_-} g_{\ga_- \bgb_-,\mu_+ \mu_-} + g^{\bgb_- \gd_-} g^{\bgg_-
\ga_-} g_{\gd_-\bgg_-,\mu_-} g_{\ga_- \bgb_-,\mu_+}\\
&\ + h^{\bgb_- \ga_-} h_{\ga_- \bgb_-,\mu_+ \mu_-} - h^{\bgb_- \gd_-} h^{\bgg_-
\ga_-} h_{\gd_-\bgg_-,\mu_-} h_{\ga_- \bgb_-,\mu_+}\\
&\ - h^{\bgb_+ \ga_+} h_{\ga_+ \bgb_+,\mu_+ \mu_-} + h^{\bgb_+ \gd_+} h^{\bgg_+
\ga_+} h_{\gd_+ \bgg_+,\mu_-} h_{\ga+ \bgb_+,\mu_+}\\
=:&\ \sum_{i=1}^8 A_i.
 \end{align*}
 Let us simplify the terms coming from $h$ above.  First of all, we note that
 \begin{align*}
\left( \tr_{h_-} \N^h T^h \right)_{\mu_+ \mu_-} =&\ h^{\bgb_- \ga_-}
\N^h_{\mu_-} T^h_{\mu_+ \ga_- \bgb_-}\\
=&\ h^{\bgb_- \ga_-} \left( T^h_{\mu_+ \ga_- \bgb_-,\mu_-} - (\gG^h)_{\mu_-
\ga_-}^{\gg_-} T^h_{\mu_+ \gg_- \bgb_-} - (\gG^h)_{\mu_- \mu_+}^{\gg_+}
T^h_{\gg_+ \ga_- \bgb_-} \right)\\
=&\ h^{\bgb_- \ga_-} \left( h_{\ga_- \bgb_-,\mu_+ \mu_-} - h^{\bgs_- \gg_-}
h_{\ga_- \bgs_-,\mu_-} h_{\gg_- \bgb_-,\mu_+} - h^{\bgs_+ \gg_+} T^h_{\mu_-
\mu_+ \bgs_+} T^h_{\gg_+ \ga_- \bgb_-} \right)\\
=&\ A_5 + A_6 + h^{\bgb_- \ga_-} h^{\bgs_+ \gg_+} T^h_{\mu_+ \mu_- \bgs_+}
T^h_{\gg_+ \ga_- \bgb_-}
 \end{align*}
 A similar calculation yields
 \begin{align*}
- \left( \tr_{h_+} \N^h T^h \right)_{\mu_+ \mu_-} = A_7 + A_8 - h^{\bgb_+ \ga_+}
h^{\bgs_- \gg_-} T^h_{\mu_+ \mu_- \bgs_-} T^h_{\gg_- \ga_+ \bgb_+}.
 \end{align*}
 Next we observe that
 \begin{align*}
  \left[ \gD \del_- \del_+ f \right]_{\mu_+ \mu_-} =&\ g^{\bgb \ga} \left[ \N
\bar{\N} \del_- \del_+ f \right]_{\ga \bgb \mu_+ \mu_-}\\
  =&\ g^{\bgb \ga} \left[ \left( \del_- \del_+ f \right)_{\mu_- \mu_+, \bgb \ga}
- \gG_{\ga \mu_+}^{\gd_+} \left( \del_- \del_+ f \right)_{\mu_- \gd_+,\bgb} -
\gG_{\ga \mu_-}^{\gd_-} \left( \del_- \del_+ f \right)_{\gd_- \mu_+,\bgb}
\right]\\
=&\ g^{\bgb \ga} \left[ f_{,\mu_+ \mu_- \bgb \ga} - g^{\bgg_+ \gd_+} g_{\mu_+
\bgg_+,\ga} f_{,\gd_+ \mu_- \bgb} - g^{\bgg_- \gd_-} g_{\mu_- \bgg_-,\ga}
f_{,\mu_+ \gd_- \bgb} \right].
\end{align*}
First we express
\begin{align*}
 g^{\bgb \ga} f_{,\mu_+ \mu_- \bgb \ga} =&\ g^{\bgb_+ \ga_+} f_{,\mu_+ \mu_-
\bgb_+ \ga_+} + g^{\bgb_- \ga_-} f_{,\mu_+ \mu_- \bgb_- \ga_-}\\
 =&\ g^{\bgb_+ \ga_+} \left( g_{\ga_+ \bgb_+,\mu_+ \mu_-} - \til{g}_{\ga_+
\bgb_+,\mu_+ \mu_-} \right) + g^{\bgb_- \ga_-} \left( \til{g}_{\ga_-
\bgb_-,\mu_+ \mu_-} - g_{\ga_- \bgb_-,\mu_+ \mu_-} \right)\\
 =&\ A_1 + A_3 + g^{\bgb_- \ga_-} \til{g}_{\ga_- \bgb_-,\mu_+ \mu_-} - g^{\bgb_+
\ga_+} \til{g}_{\ga_+ \bgb_+,\mu_+ \mu_-}.
\end{align*}
Next we simplify
\begin{align*}
- g^{\bgb \ga} & \left[ g^{\bgg_+ \gd_+} g_{\mu_+ \bgg_+,\ga} f_{,\gd_+ \mu_-
\bgb} + g^{\bgg_- \gd_-} g_{\mu_- \bgg_-,\ga} f_{,\mu_+ \gd_- \bgb} \right]\\
=&\ - g^{\bgb_+ \ga_+} g^{\bgg_+ \gd_+} g_{\mu_+ \bgg_+,\ga_+} \left( g_{\gd_+
\bgb_+,\mu_-} - \til{g}_{\gd_+ \bgb_+,\mu_-} \right)\\
&\ + g^{\bgb_- \ga_-} g^{\bgg_+ \gd_+} g_{\mu_+ \bgg_+,\ga_-} \left(g_{\mu_-
\bgb_-,\gd_+} - \til{g}_{\mu_- \bgb_-,\gd_+} \right)\\
&\ - g^{\bgb_+ \ga_+} g^{\bgg_- \gd_-} g_{\mu_- \bgg_-,\ga_+} \left( g_{\mu_+
\bgb_+,\gd_-} - \til{g}_{\mu_+ \bgb_+,\gd_-} \right)\\
&\ + g^{\bgb_- \ga_-} g^{\bgg_- \gd_-} g_{\mu_- \bgg_-,\ga_-} \left( g_{\gd_-
\bgb_-,\mu_+} - \til{g}_{\gd_- \bgb_-,\mu_+} \right)\\
=&\ - g^{\bgb_+ \ga_+} g^{\bgg_+ \gd_+} g_{\ga_+ \bgg_+,\mu_+} g_{\gd_+
\bgb_+,\mu_-} + g^{\bgb_- \ga_-} g^{\bgg_- \gd_-} g_{\ga_- \bgg_-,\mu_-}
g_{\gd_- \bgb_-,\mu_+}\\
&\ + g^{\bgb_+ \ga_+} g^{\bgg_+ \gd_+} g_{\mu_+ \bgg_+,\ga_+} \til{g}_{\gd_+
\bgb_+,\mu_-} - g^{\bgb_- \ga_-} g^{\bgg_+ \gd_+} g_{\mu_+ \bgg_+,\ga_-}
\til{g}_{\mu_-\bgb_-,\gd_+}\\
&\ + g^{\bgb_+ \ga_+} g^{\bgg_- \gd_-} g_{\mu_- \bgg_-,\ga_+} \til{g}_{\mu_+
\bgb_+,\gd_-} - g^{\bgb_- \ga_-} g^{\bgg_- \gd_-} g_{\mu_- \bgg_-,\ga_-}
\til{g}_{\gd_- \bgb_-,\mu_+}\\
=&\ A_2 + A_4 + g^{\bgb_+ \ga_+} g^{\bgg_+ \gd_+} g_{\mu_+ \bgg_+,\ga_+}
\til{g}_{\gd_+
\bgb_+,\mu_-} - g^{\bgb_- \ga_-} g^{\bgg_+ \gd_+} g_{\mu_+ \bgg_+,\ga_-}
\til{g}_{\mu_-\bgb_-,\gd_+}\\
&\ + g^{\bgb_+ \ga_+} g^{\bgg_- \gd_-} g_{\mu_- \bgg_-,\ga_+} \til{g}_{\mu_+
\bgb_+,\gd_-} - g^{\bgb_- \ga_-} g^{\bgg_- \gd_-} g_{\mu_- \bgg_-,\ga_-}
\til{g}_{\gd_- \bgb_-,\mu_+},
\end{align*}
where in the penultimate line we observed a cancellation and applied
(\ref{partialK})
to two terms.  Combining our calculations above yields
\begin{align*}
 \dt \left( \del_- \del_+ f \right)_{\mu_+ \mu_-} =&\ \left[ \gD \del_- \del_+ f
+ \N^h T^h + T^h *_h T^h \right]_{\mu_+ \mu_-}\\
 &\ - g^{\bgb_- \ga_-} \til{g}_{\ga_- \bgb_-,\mu_+ \mu_-} + g^{\bgb_+
\ga_+} \til{g}_{\ga_+ \bgb_+,\mu_+ \mu_-}\\
&\ - g^{\bgb_+ \ga_+} g^{\bgg_+ \gd_+} g_{\mu_+ \bgg_+,\ga_+} \til{g}_{\gd_+
\bgb_+,\mu_-} + g^{\bgb_- \ga_-} g^{\bgg_+ \gd_+} g_{\mu_+ \bgg_+,\ga_-}
\til{g}_{\mu_-\bgb_-,\gd_+}\\
&\ - g^{\bgb_+ \ga_+} g^{\bgg_- \gd_-} g_{\mu_- \bgg_-,\ga_+} \til{g}_{\mu_+
\bgb_+,\gd_-} + g^{\bgb_- \ga_-} g^{\bgg_- \gd_-} g_{\mu_- \bgg_-,\ga_-}
\til{g}_{\gd_- \bgb_-,\mu_+}\\
=&\ \left[ \gD \del_- \del_+ f
+ \N^h T^h + T^h *_h T^h \right]_{\mu_+ \mu_-} + \sum_{i=1}^6 B_6.
\end{align*}
Lastly we simplify the $B_i$ terms.  First
\begin{align*}
- \left( \tr_{g_-} \N^g \til{T} \right)_{\mu_+ \mu_-} =&\ - g^{\bgb_- \ga_-}
\N^g_{\mu_-} \til{T}_{\mu_+ \ga_- \bgb_-}\\
=&\ g^{\bgb_- \ga_-} \left[ - \left( \til{T}_{\mu_+ \ga_- \bgb_-}
\right)_{,\mu_-} + (\gG^g)_{\mu_- \mu_+}^{\gg_+} \til{T}_{\gg_+ \ga_- \bgb_-} +
(\gG^g)_{\mu_- \ga_-}^{\gg_-} \til{T}_{\mu_+ \gg_- \bgb_-} \right]\\
=&\ g^{\bgb_- \ga_-} \left[ - \til{g}_{\ga_- \bgb_-,\mu_+ \mu_-} + g^{\bgs_+
\gg_+} g_{\mu_+ \bgs_+,\mu_-} \til{T}_{\gg_+ \ga_- \bgb_-} + g^{\bgs_- \gg_-}
g_{\ga_- \bgs_-,\mu_-} \til{g}_{\gg_-\bgb_-,\mu_+} \right]\\
=&\ B_1 + B_6 + g^{\bgb_- \ga_-} g^{\bgs_+ \gg_+} T_{\mu_- \mu_+ \bgs_+}
\til{T}_{\gg_+ \ga_- \bgb_-}
\end{align*}
Next
\begin{align*}
\left( \tr_{g_+} \N^g \til{T} \right)_{\mu_+ \mu_-} =&\ g^{\bgb_+ \ga_+}
\N^g_{\mu_+} \til{T}_{\mu_- \ga_+ \bgb_+}\\
=&\ g^{\bgb_+ \ga_+} \left[ \left( \til{T}_{\mu_- \ga_+ \bgb_+} \right)_{,\mu_+}
- (\gG^g)_{\mu_+ \mu_-}^{\gg_-} \til{T}_{\gg_- \ga_+ \bgb_-} - (\gG^g)_{\mu_+
\ga_+}^{\gg_+} \til{T}_{\mu_- \gg_+ \bgb_+} \right]\\
=&\ g^{\bgb_+ \ga_+} \left[ \til{g}_{\ga_+ \gb_+,\mu_+ \mu_-} - g^{\bgs_- \gg_-}
g_{\mu_- \bgs_-,\mu_+} \til{T}_{\gg_- \ga_+ \bgb_+} - g^{\bgs_+ \gg_+} g_{\ga_+
\bgs_+,\mu_+} \til{g}_{\gg_+ \bgb_+,\mu_-} \right]\\
=&\ B_2 + B_3 - g^{\bgb_- \ga_-} g^{\bgs_+ \gg_+} T_{\mu_- \mu_+ \bgs_+}
\til{T}_{\gg_+ \ga_- \bgb_-}.
\end{align*}
Lastly
\begin{align*}
 B_4 + B_5 =&\ g^{\bgb_- \ga_-} g^{\bgg_+ \gd_+} T_{\ga_- \mu_+ \bgg_+}
\til{T}_{\gd_+ \mu_- \bgb_-} - g^{\bgb_+ \ga_+} g^{\bgg_- \gd_-} T_{\ga_+ \mu_-
\bgg_-} \til{T}_{\gd_- \mu_+ \bgb_+}.
\end{align*}
The result follows.
\end{proof}
\end{lemma}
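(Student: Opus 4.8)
The plan is to obtain the evolution of the torsion potential by directly differentiating the scalar flow equation (\ref{scalarflow}) twice in the holomorphic directions $z^{\mu_+}$ and $z^{\mu_-}$, and then to reorganize the resulting expression into a Chern Laplacian plus explicitly identified torsion terms. Recall from the remark preceding the lemma that $\del_- \del_+ f$ is, up to a constant, the ``torsion potential'' $\del \bar{\ga}$, so the content is that this natural object evolves by a heat-type equation with a lower order inhomogeneity $\Psi$. First I would compute $\dt f_{,\mu_+ \mu_-}$: differentiating $\dt f = \log \frac{\det g_+ \det h_-}{\det h_+ \det g_-}$ and using the elementary identity $\del_a \log \det g_{\pm} = g^{\bgb_{\pm} \ga_{\pm}} g_{\ga_{\pm} \bgb_{\pm}, a}$ on each holomorphic subbundle (legitimate by Theorem \ref{splitmetrics}), the product rule produces eight terms $A_1, \dots, A_8$: four built from the flowing metric $g_{\pm}$ (two involving second derivatives of $g_{\pm}$, two quadratic in its first derivatives) and four of the same shape built from the fixed background $h_{\pm}$.

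The next step is to recognize the two halves of this sum separately. For the background terms, the identities (\ref{metricsplit})--(\ref{torsionform}) -- in particular the fact that the only nonvanishing Chern torsion components are the $T_{\ga_+ \gb_- \bgg_-}$ and $T_{\ga_- \gb_+ \bgg_+}$ of (\ref{torsionform}) -- let me unwind the covariant derivative of the background torsion and repackage $A_5 + A_6$ as $\left( \tr_{h_-} \N^h T^h \right)_{\mu_+ \mu_-}$ plus an explicit quadratic torsion term, and $A_7 + A_8$ as $- \left( \tr_{h_+} \N^h T^h \right)_{\mu_+ \mu_-}$ plus its quadratic torsion term. This reproduces the first two lines of $\Psi$ in (\ref{psidef}).

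In parallel I would compute $\left[ \gD \del_- \del_+ f \right]_{\mu_+ \mu_-} = g^{\bgb \ga} \left[ \N \bar{\N} \del_- \del_+ f \right]_{\ga \bgb \mu_+ \mu_-}$ straight from the definition of the Chern Laplacian. Since $\del_- \del_+ f$ carries one $T_+$ and one $T_-$ index, the two covariant derivatives generate Christoffel corrections in both blocks, while (\ref{metricsplit}) forces the contraction $g^{\bgb \ga}$ to split as $g^{\bgb_+ \ga_+} + g^{\bgb_- \ga_-}$. The key algebraic move is to substitute $g_{\ga_{\pm} \bgb_{\pm}} = \til{g}_{\ga_{\pm} \bgb_{\pm}} \pm f_{,\ga_{\pm} \bgb_{\pm}}$, which converts the fourth order terms $f_{,\mu_+ \mu_- \bgb_{\pm} \ga_{\pm}}$ into second derivatives of $g$ and of $\til{g}$. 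One then checks that the genuine second-derivative-of-metric pieces match $A_1 + A_3$ exactly and, after applying the partial K\"ahler identity (\ref{partialK}) and spotting one cancellation, the connection-squared pieces match $A_2 + A_4$. This is the heart of the argument: the potentially quadratic-in-the-flowing-connection contributions are precisely those absorbed into the Laplacian, leaving no such terms behind.

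What remains after this matching is a collection of six terms $B_1, \dots, B_6$ built from derivatives of $g$ and $\til{g}$. Exactly as in the background computation, I would recognize $B_1 + B_6$ as $- \left( \tr_{g_-} \N^g \til{T} \right)_{\mu_+ \mu_-}$ modulo an explicit torsion product, $B_2 + B_3$ as $\left( \tr_{g_+} \N^g \til{T} \right)_{\mu_+ \mu_-}$ modulo the same, and read off $B_4 + B_5$ directly; collecting these yields the final four lines of (\ref{psidef}). The main obstacle here is not any single estimate but disciplined bookkeeping: one must consistently track whether each summed index lives in the $T_+$ or $T_-$ block, invoke (\ref{metricsplit}) repeatedly to annihilate mixed components, and correctly match the two ``trace of covariant derivative of torsion'' structures against the $A_i$ and $B_i$. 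The final consistency check, which also justifies the usefulness of this potential, is that every surviving term in $\Psi$ is at most linear in the flowing connection and that $\Psi$ vanishes identically when the background is K\"ahler, so that $T^h$ and $\til{T}$ both drop out.
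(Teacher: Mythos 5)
Your proposal is correct and takes essentially the same approach as the paper: differentiating (\ref{scalarflow}) twice to produce the eight terms $A_1,\dots,A_8$, expanding the Chern Laplacian and substituting $g_{\ga_{\pm}\bgb_{\pm}} = \til{g}_{\ga_{\pm}\bgb_{\pm}} \pm f_{,\ga_{\pm}\bgb_{\pm}}$ so that the second-derivative and connection-squared pieces absorb $A_1+A_3$ and (via (\ref{partialK}) and one cancellation) $A_2+A_4$, then recognizing $A_5+A_6$, $A_7+A_8$ as $\pm\left(\tr_{h_{\mp}}\N^h T^h\right)$ modulo quadratic torsion and the residual $B_i$ terms as the $\tr_{g_{\pm}}\N^g \til{T}$ and torsion-product terms of (\ref{psidef}). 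This matches the paper's proof step for step, including the closing observation that $\Psi$ is at most linear in the flowing connection and vanishes against a K\"ahler background.
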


\begin{rmk} Observe that this evolution equation has the remarkable property
that the quadratic first order nonlinearity present throughout most evolution
equations associated to this flow has in this case completely disappeared.  This
will play a crucial role in obtaining estimates in this setting.  Indeed, this
property holds more generally for solutions to the pluriclosed flow, and will be
presented in future work.
\end{rmk}

\begin{lemma} \label{01formgenev} Let $(M^{2n}, \gw_t, J)$ be a solution to
pluriclosed flow, and
suppose $\gb_t \in \Lambda^{p,0}$ is a one-parameter family satisfying
\begin{align*}
 \dt \gb =&\ {\gD}_{g_t} \gb + \Psi,
\end{align*}
where $\Psi_t \in \Lambda^{p,0}$.  Then
\begin{align} \label{oneformflow}
 \dt \brs{\gb}^2 =&\ \gD \brs{\gb}^2 - \brs{\N \gb}^2 - \brs{\bar{\N} \gb}^2 -
p \IP{Q, \tr_g \left(\gb \otimes \bar{\gb} \right)} + 2 \Re \IP{\gb,\Psi}
\end{align}
\begin{proof} By direct computation we have
 \begin{align*}
  \dt \brs{\gb}^2 =&\ \dt g^{\bj_1 i_1} \dots g^{\bj_p i_p} {\gb}_{i_1\dots
i_p} \bar{\gb}_{\bj_1\dots \bj_p}\\
  =&\ - p g^{\bj_1 k} (-S + Q)_{k \bl} g^{\bl i_1} g^{\bj_2 i_2} \dots g^{\bj_p
i_p} {\gb}_{i_1 \dots i_p}
\bar{\gb}_{\bj_1 \dots \bj_p} + \IP{ \gD \gb, \bar{\gb}} +
\IP{\gb, \bar{\gD} \bar{\gb}} + 2 \Re \IP{\gb,\Psi}\\
=&\ p \IP{S - Q, \tr_g \left(\gb \otimes \bar{\gb} \right)} + \IP{ \gD \gb,
\bar{\gb}} +
\IP{\gb, \bar{\gD} \bar{\gb}} + 2 \Re \IP{\gb,\Psi}.
 \end{align*}
Next we observe the commutation formula
\begin{align*}
 \bar{\gD} \bar{\gb}_{\bj_1 \dots \bj_p} =&\ g^{\bk l} \N_{\bk} \N_l
\bar{\gb}_{\bj_1\dots\bj_p}\\
=&\ g^{\bk l} \N_l
\N_{\bk} \bar{\gb}_{\bj_1\dots\bj_p} - \sum_{r=1}^p g^{\bk l} \Omega_{\bk l
\bj_r}^{\bm} \bar{\gb}_{\bj_1\dots \bj_{r-1} \bm \bj_{r+1}\dots\bj_p}\\
=&\ \gD \bar{\gb}_{\bj_1\dots\bj_p} - \sum_{r=1}^p S_{\bj_r}^{\bm}
\bar{\gb}_{\bj_1\dots
\bj_{r-1} \bm \bj_{r+1}\dots\bj_p}.
\end{align*}
It follows that
\begin{align*}
 \IP{{\gb}, \bar{\gD} \bar{\gb}} =&\ g^{\bj_1 i_1} \dots g^{\bj_p i_p}
{\gb}_{i_1\dots i_p} \bar{\gD} \bar{\gb}_{\bj_1 \dots \bj_p}\\
 =&\ g^{\bj_1 i_1} \dots g^{\bj_p i_p} {\gb}_{i_1\dots i_p} \left[ \gD
\bar{\gb}_{\bj_1 \dots \bj_p} - \sum_{r=1}^p S_{\bj_r}^{\bm} \gb_{\bj_1\dots
\bj_{r-1}
\bm \bj_{r+1}\dots\bj_p} \right]\\
 =&\ \IP{ {\gb}, \gD \bar{\gb}} - p \IP{S, \gb \otimes \bar{\gb}}.
\end{align*}
Lastly observe the identity
\begin{align*}
 \gD \brs{\gb}^2 =&\ \IP{\gD \gb, \bar{\gb}} + \IP{{\gb}, \gD \bar{\gb}} +
\brs{\N
\gb}^2 + \brs{\bar{\N} \gb}^2.
\end{align*}
Combining the above calculations yields the lemma.
\end{proof}
\end{lemma}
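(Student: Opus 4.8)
The plan is to differentiate $\brs{\gb}^2$ directly in time and then reorganize the result using two standard ingredients: a Chern-curvature commutation identity and the Bochner formula for $\gD \brs{\gb}^2$. Writing $\brs{\gb}^2 = g^{\bj_1 i_1} \cdots g^{\bj_p i_p} \gb_{i_1 \dots i_p} \bar{\gb}_{\bj_1 \dots \bj_p}$, the time derivative splits into terms acting on the $p$ inverse-metric factors and terms acting on $\gb$ and $\bar{\gb}$ through the given evolution $\dt \gb = \gD \gb + \Psi$. Since $\dt g = - S + Q$ along pluriclosed flow, I would use $\dt g^{\bj i} = g^{\bj k} (S - Q)_{k \bl} g^{\bl i}$; by symmetry across the $p$ identical contractions these assemble into $p \IP{S - Q, \tr_g \left( \gb \otimes \bar{\gb} \right)}$, while the evolution terms contribute $\IP{\gD \gb, \bar{\gb}} + \IP{\gb, \bar{\gD} \bar{\gb}} + 2 \Re \IP{\gb, \Psi}$, where $\bar{\gD} = g^{\bk l} \N_{\bk} \N_l$.

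The next step is a commutation formula converting $\bar{\gD} \bar{\gb}$ into $\gD \bar{\gb}$. Commuting $\N_{\bk}$ past $\N_l$ on the $(0,p)$-form $\bar{\gb}$ produces the Chern curvature $\Omega_{\bk l}$ acting on each form index; crucially, no torsion term appears, because the torsion of the Chern connection is of type $(2,0)$ and therefore drops out of a mixed holomorphic/antiholomorphic commutator. Tracing with $g^{\bk l}$ and recognizing $g^{\bk l} \Omega_{\bk l \bj_r}^{\bm} = S_{\bj_r}^{\bm}$ gives $\bar{\gD} \bar{\gb}_{\bj_1 \dots \bj_p} = \gD \bar{\gb}_{\bj_1 \dots \bj_p} - \sum_{r=1}^p S_{\bj_r}^{\bm} \bar{\gb}_{\bj_1 \dots \bj_{r-1} \bm \bj_{r+1} \dots \bj_p}$, hence $\IP{\gb, \bar{\gD} \bar{\gb}} = \IP{\gb, \gD \bar{\gb}} - p \IP{S, \gb \otimes \bar{\gb}}$.

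Finally I would invoke the Bochner-type identity $\gD \brs{\gb}^2 = \IP{\gD \gb, \bar{\gb}} + \IP{\gb, \gD \bar{\gb}} + \brs{\N \gb}^2 + \brs{\bar{\N} \gb}^2$ and substitute everything back. The two $p \IP{S, \cdot}$ contributions cancel exactly, the $\brs{\N \gb}^2$ and $\brs{\bar{\N} \gb}^2$ terms move to the left of the Laplacian with a minus sign, and the remaining $-Q$ term survives, producing the stated identity $\dt \brs{\gb}^2 = \gD \brs{\gb}^2 - \brs{\N \gb}^2 - \brs{\bar{\N} \gb}^2 - p \IP{Q, \tr_g \left( \gb \otimes \bar{\gb} \right)} + 2 \Re \IP{\gb, \Psi}$.

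There is no genuine analytic obstacle here; the computation is a Bochner calculation and the only delicate points are bookkeeping. The step I expect to require the most care is the commutation formula: one must verify that the mixed commutator $[\N_l, \N_{\bk}]$ contributes only the Chern curvature endomorphism $S = \tr_g \Omega$ and no torsion, since it is precisely this fact that makes the $S$-terms from curvature cancel against those from the variation of the metric, leaving only the $Q$-term. Getting the sign of $\dt g^{\bj i}$ correct is the other place where an error would propagate through the whole identity.
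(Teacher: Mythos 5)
Your proposal is correct and follows essentially the same route as the paper's proof: differentiate $\brs{\gb}^2$ using $\dt g = -S+Q$, commute $\bar{\gD}\bar{\gb}$ into $\gD\bar{\gb}$ via the traced Chern curvature $S = \tr_g \Omega$ (with no torsion correction, exactly as you justify), and substitute into the Bochner identity so that the two $p\IP{S,\cdot}$ terms cancel. Your identification of the delicate points --- the sign of $\dt g^{\bj i}$ and the absence of torsion in the mixed commutator --- matches precisely where the paper's computation does its work.
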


\begin{rmk} This lemma yields a particularly clean estimate for $(p,0)$ forms
evolving by the heat equation against a wide class of Hermitian curvature flows,
as considered in \cite{STHCF}.  In the case of pluriclosed flow we have natural,
geometrically meaningful $(p,0)$ forms to which this observation can be
applied.  The crucial cancellation of the ``$S$'' term arising from the
variation
of the norm itself is similar to the very useful cancellation of the Ricci
curvature terms which occurs in the evolution of the gradient of a function
evolving by the heat flow against a Ricci-flow background.
\end{rmk}

\begin{lemma} \label{torsionpotentialnormev} Given the setup above, one has
\begin{align*}
 \dt \brs{\del_+ \del_- f}^2 =&\ \gD \brs{\del_+ \del_- f}^2 - \brs{\bar{\N}
\del_+ \del_- f}^2 - \brs{\N \del_+ \del_- f}^2 - 2 \IP{Q, \del_+ \del_- f
\otimes_g \bar{\del}_+ \delb_- f} + 2 \Re \IP{\del_+ \del_- f, \Psi},
\end{align*}
where $\Psi$ is as in (\ref{psidef}).
\begin{proof} This now follows directly from Lemmas \ref{torsionpotentialev} and
\ref{01formgenev}. 
\end{proof}
\end{lemma}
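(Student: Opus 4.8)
The statement is an immediate corollary of the two preceding lemmas, so the plan is purely to make the substitution precise. First I would set $\gb := \del_- \del_+ f$. Since $\del_\pm$ are the holomorphic pieces of the refined decomposition $d = \del_+ + \delb_+ + \del_- + \delb_-$, the tensor $\gb$ is a section of $\Lambda^{1,0}_+ \wedge \Lambda^{1,0}_- \subset \Lambda^{2,0}$, hence a $(2,0)$-form, and the relevant degree in Lemma \ref{01formgenev} is $p = 2$. By Lemma \ref{torsionpotentialev} this $\gb$ satisfies exactly the hypothesis $\dt \gb = \gD \gb + \Psi$, with $\Psi \in \Lambda^{2,0}$ as in (\ref{psidef}); one checks from (\ref{psidef}) that $\Psi$ indeed has nonvanishing components only in the mixed block $\mu_+ \mu_-$, consistent with its type. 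Thus Lemma \ref{01formgenev} applies with no modification.

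Substituting $p = 2$ into (\ref{oneformflow}) gives
\begin{align*}
 \dt \brs{\gb}^2 =&\ \gD \brs{\gb}^2 - \brs{\N \gb}^2 - \brs{\bar{\N} \gb}^2 - 2 \IP{Q, \tr_g \left( \gb \otimes \bar{\gb} \right)} + 2 \Re \IP{\gb, \Psi}.
\end{align*}
It then remains only to translate notation. Since $f$ is a scalar and partial derivatives commute, the two orderings differ by a sign, $\del_- \del_+ f = - \del_+ \del_- f$, so $\brs{\gb}^2 = \brs{\del_+ \del_- f}^2$ and every derivative norm and inner product is unchanged; likewise the contraction $\tr_g(\gb \otimes \bar\gb)$ is precisely the object written as $\del_+ \del_- f \otimes_g \bar{\del}_+ \delb_- f$ in the statement, and $\IP{\gb,\Psi} = \IP{\del_+ \del_- f, \Psi}$ up to the same global sign, which is immaterial under $\Re$. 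This reproduces the asserted identity.

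The only genuine point to check — and where the split structure does the work — is that treating $\gb$ as a generic $(2,0)$-form in Lemma \ref{01formgenev} introduces no spurious off-diagonal contributions. Because $\gb$ has nonvanishing components only for one $T_+$ and one $T_-$ index, and the metric is block diagonal by (\ref{metricsplit}), the contractions defining $\brs{\N \gb}^2$, $\brs{\bar{\N} \gb}^2$ and $\IP{Q, \tr_g(\gb \otimes \bar\gb)}$ automatically respect the decomposition $T_+ M \oplus T_- M$, so the abstract formula specializes correctly to the component-level torsion potential. I expect this bookkeeping, together with confirming $p = 2$, to be the only subtlety; the substance of the computation was already carried out in Lemmas \ref{torsionpotentialev} and \ref{01formgenev}, and the present lemma is simply their composition.
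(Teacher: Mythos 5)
Your proposal is correct and takes exactly the paper's approach: the paper's entire proof is the one-line observation that the lemma follows from Lemmas \ref{torsionpotentialev} and \ref{01formgenev}, and you have simply made that substitution explicit (identifying $\gb = \del_-\del_+ f$ as the relevant form, $p=2$, and checking the type of $\Psi$). One small caution on a side remark: your claim that the sign in $\del_-\del_+ f = -\del_+\del_- f$ is ``immaterial under $\Re$'' is not right as stated, since $\Re$ does not absorb a global sign; the actual reason no sign discrepancy arises is that the paper indexes both orderings by the same components $f_{,\mu_+\mu_-}$ (so the two symbols denote the same tensor), or equivalently, if one insists on the exterior-algebra sign, then the source term in the evolution equation of Lemma \ref{torsionpotentialev} flips sign along with the form, leaving $2\Re\IP{\cdot,\Psi}$ unchanged.
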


\section{A priori estimates} \label{estimates}

In this section and the next we obtain a number of a priori estimates which are
the main content of Theorem \ref{mainthm2}.   First in \S \ref{estsetup} we set
up the scalar reduction inside the positive cone.  Then we obtain an estimate
for the potential function in general dimensions in Lemma \ref{fbound}.  Then we
establish an estimate for $\dot{f}$ using structure special to dimension $n=2$. 
Note that the corresponding estimate in K\"ahler-Ricci flow corresponds to
uniform upper and lower bounds on the volume form, which for instance renders
upper and lower bounds on the metric equivalent.  In our setting it merely
controls the ratio of the volume forms of the metrics on the two bundles
$T^{\mathbb C}_{\pm} M$.  In \S \ref{mubsec} we establish an upper bound for the
metric in the presence of a lower bound.  This requires controlling potentially
troublesome torsion terms arising in the evolution of $\tr_h g$, which is where
the very helpful evolution equation of Lemma \ref{torsionpotentialnormev} plays
a 
crucial role.

\subsection{Setup} \label{estsetup}

Fix a time $\tau < \tau^*$, and fix arbitrary metrics $\til{h}_{\pm}$ on
$T_{\pm} M$ respectively, and observe
that by construction $\gw_0 - \tau P(\til{h}_{\pm}) \in \mathcal P_+$, and so
there exists $a \in C^{\infty}(M)$ such that
\begin{align*}
 \gw_0 - \tau P(\til{h}_{\pm}) + \left[ \gd_+ \gd^c_+ - \gd_- \gd^c_-
\right] a > 0.
\end{align*}
Now set $h_{\pm} = e^{\pm \frac{a}{2 \tau}} \til{h}_{\pm}$.  It follows that
\begin{align*}
 \gw_0 - \tau P(h_{\pm}) > 0.
\end{align*}
Moreover, by convexity one has a smooth one-parameter family of background
metrics
\begin{align*}
 \til{\gw}_t := \gw_0 - t P(h_{\pm}) > 0.
\end{align*}
With this choice of background data we let $f_t$ be the solution to
(\ref{scalarflow}) as in Lemma \ref{reduction}.  In the remainder of this
section we assume this setup, and all constants $C$ will depend on all of the
choices $n,\tau, h, \til{g}$.

\subsection{Estimate for the potential}

\begin{lemma} \label{fbound} Given the setup above, there exists a constant $C$
such that
\begin{align*}
 \brs{f} \leq C(1 + t).
\end{align*}
\begin{proof} Fix some constant $A > 0$ and let
\begin{align*}
 \Phi(x,t) := f(x,t) + t A.
\end{align*}
By direct computations we have
\begin{align*}
 \dt \Phi =&\ \log \frac{ \det g_+ \det h_{-}}{\det h_{+} \det g_-} + A\\
 =&\ \log \frac{\det g_+}{\det \til{g}_+} + \log \frac{\det \til{g}_{-}}{\det
g_-} + \log
\frac{ \det \til{g}_{+} \det h_{-}}{\det h_{+} \det \til{g}_-} + A\\
\geq&\ \log \frac{\det g_+}{\det \til{g}_+} + \log \frac{\det \til{g}_-}{\det
g_-},
\end{align*}
where the last inequality follows by choosing $A$ sufficiently large with
respect to the background data $\til{g}$, $h_{\pm}$.  At a minimum point for
$\Phi$,
it follows that $f$ is at a minimum, and so $\gd_{\pm} \gd^c_{\pm} f > 0$.  It
now follows from the maximum principle that the minimum of $\Phi$ is
nondecreasing, and so the lower bound for $\Phi$, and hence $f$, follows.  A
similar argument yields the upper bound.
\end{proof}
\end{lemma}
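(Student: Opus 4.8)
The plan is to establish both the upper and lower bounds by the parabolic maximum principle applied to the modified potentials $f + tA$ and $f - tA$ for a suitably large constant $A$. The key structural input is the precise form of the reduced flow (\ref{scalarflow}): writing $\gw^f_t = \til{\gw}_t + (\gd_+ \gd^c_+ - \gd_- \gd^c_-) f$, the induced metrics on $T_{\pm} M$ are $g^f_+ = \til{g}_+ + \gd_+ \gd^c_+ f$ and $g^f_- = \til{g}_- - \gd_- \gd^c_- f$, where I emphasize the \emph{opposite signs} of the two Hessian contributions. This sign discrepancy is exactly what makes the zeroth-order estimate close up.

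First I would examine the time derivative at a spatial minimum of $f$ at fixed $t$. At such a point the complex Hessian $f_{,i\bj}$ is positive semidefinite, and hence so are its restrictions $f_{,\ga_{\pm} \bgb_{\pm}}$ to the $+$ and $-$ directions. Therefore $g^f_+ \geq \til{g}_+$ while $g^f_- \leq \til{g}_-$ --- the opposite signs guarantee that both determinant comparisons point the same way --- so that
\begin{align*}
\dt f = \log \frac{\det g^f_+}{\det g^f_-} + \log \frac{\det h_-}{\det h_+} \geq \log \frac{\det \til{g}_+}{\det \til{g}_-} + \log \frac{\det h_-}{\det h_+}.
\end{align*}
The right-hand side is a fixed smooth function of the background data $\til{g}_{\pm}, h_{\pm}$, which, since $M$ is compact and $t$ ranges over the fixed interval $[0,\tau]$, is bounded below by some constant $-A$.

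With this $A$ fixed I would set $\Phi := f + tA$, so that at any spatial minimum of $\Phi$ (which is simultaneously a spatial minimum of $f$) the computation above gives $\dt \Phi = \dt f + A \geq 0$. The maximum principle then forces $\min_x \Phi(\cdot, t)$ to be nondecreasing; since $\Phi(\cdot, 0) = f(\cdot, 0) = 0$, this yields $f \geq -tA$, the desired lower bound. The upper bound follows by the symmetric argument applied to $f - tA$: at a spatial maximum the Hessian restrictions are negative semidefinite, forcing $g^f_+ \leq \til{g}_+$ and $g^f_- \geq \til{g}_-$, hence $\dt f \leq A$ after possibly enlarging $A$, and the maximum principle gives $f \leq tA$.

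The main point to get right --- and the only genuine subtlety --- is the sign bookkeeping described above: one must confirm that the opposing signs of the $+$ and $-$ Hessian pieces make the two determinant inequalities cooperate rather than compete, so that $\dt f$ is controlled in a single direction at each extremum. Once this is in hand the argument is the standard $C^0$ estimate for parabolic complex Monge--Amp\`ere type flows, with the compactness of $M$ and of $[0,\tau]$ supplying the uniform bound on the background term and thus the constant $C = C(n, \tau, h, \til{g})$.
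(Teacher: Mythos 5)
Your proposal is correct and follows essentially the same argument as the paper: both apply the maximum principle to $f \pm tA$, using that at a spatial extremum the restrictions of the complex Hessian to $T_{\pm}M$ have a definite sign, so that the opposite signs in $g^f_+ = \til{g}_+ + \gd_+\gd^c_+ f$ and $g^f_- = \til{g}_- - \gd_-\gd^c_- f$ make the two determinant comparisons cooperate, with $A$ chosen to absorb the bounded background term. The only cosmetic difference is that the paper splits the logarithm into three terms before invoking this sign observation, whereas you bound $\dt f$ directly at the extremum; the content is identical.
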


\subsection{Estimate for the time derivative of the potential}

\begin{prop} \label{fdotest} Given the setup above in the case $n=2, \rank=1$,
there
exists a
constant $C > 0$ so that
\begin{align*}
 \brs{\frac{\del f}{\del t}} \leq C.
\end{align*}
\begin{proof} 
To begin we observe the identity
\begin{align*}
 \gD f =&\ g^{\bga_+ \ga_+} f_{,\ga_+\bga_+} + g^{\bga_- \ga_-} f_{,\ga_-
\bga_-}\\
 =&\ g^{\bga_+ \ga_+} \left( g_{\ga_+ \bga_+} - \til{g}_{\ga_+ \bga_+} \right) +
g^{\bga_- \ga_-} \left( \til{g}_{\ga_- \bga_-} - g_{\ga_- \bga_-} \right)\\
 =&\ - g^{\bga_+ \ga_+} \til{g}_{\ga_+ \bga_+} + g^{\bga_- \ga_-} \til{g}_{\ga_-
\bga_-} .
\end{align*}
Now let
\begin{align*}
 \Phi(x,t) = (\tau - t) \dot{f} + f
\end{align*}
Combining the above observation with Lemma \ref{fdotev} yields the evolution
equation
\begin{align*}
 \dt \Phi =&\ \left(\tau - t \right) \ddot{f} - \dot{f} + \dot{f}\\
 =&\ (\tau - t) \left[ \gD \dot{f} - g^{\bga_+ \ga_+} P(h_{\pm})_{\ga_+ \bga_+}
+
g^{\bga_- \ga_-} P(h_{\pm})_{\ga_- \bga_-} \right]\\
=&\ \gD \Phi - \gD f + (\tau - t) \left[ - g^{\bga_+ \ga_+} P(h_{\pm})_{\ga_+
\bga_+}
+
g^{\bga_- \ga_-} P(h_{\pm})_{\ga_- \bga_-} \right]\\
=&\ \gD \Phi + g^{\bga_+ \ga_+} \left( \til{g}_t - (\tau - t) P(h_{\pm})
\right)_{\ga_+\bga_+} - g^{\bga_-\ga_-} \left( \til{g}_t - (\tau - t)
 P(h_{\pm})  \right)_{\ga_-\bga_-}\\
=&\ \gD \Phi + g^{\bga_+ \ga_+} (\til{g}_{\tau})_{\ga_+\bga_+} - g^{\bga_-\ga_-}
(\til{g}_{\tau})_{\ga_-\bga_-}.
\end{align*}
We will now apply the maximum principle to this identity.  Fix some constant $2A
> 0$.  Since $\brs{f}$ has a uniform bound by Lemma \ref{fbound}, if a maximum
of $\Phi$ is sufficiently large we can conclude that $\frac{\del f}{\del t} >
A$.  Using the evolution equation for $f$ this means that there is a uniform
constant $C$, so that if we fix coordinates such that $\til{h} = \Id$ at the
point in
consideration, one has
\begin{align*}
g_{\ga_+ \bga_+}> e^{\frac{A}{C}} g_{\ga_- \bga_-},
\end{align*}
which in turn implies that
\begin{align*}
 - g^{\bga_- \ga_-} \leq - e^{\frac{A}{C}} g^{\bga_+ \ga_+}
\end{align*}
In particular, if $A$ is sufficiently large with respect to the background
metric 
$\til{g}_T$ this implies that
\begin{align*}
g^{\bga_+ \ga_+} (\til{g}_{\tau})_{\ga_+\bga_+} - g^{\bga_-\ga_-}
(\til{g}_{\tau})_{\ga_-\bga_-} \leq&\ g^{\bga_+ \ga_+} \left(
(\til{g}_{\tau})_{\ga_+
\bga_+} - e^{\frac{A}{C}} (\til{g}_{\tau})_{\ga_- \bga_-} \right)\\
\leq&\ 0.
\end{align*}
By the maximum principle we conclude that a uniform upper bound for $\Phi$ holds
on any compact subinterval of $[0,T)$.  A similar line of reasoning yields the
lower bound for $\Phi$, finishing the proposition.
\end{proof}
\end{prop}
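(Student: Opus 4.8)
The plan is to bound $\dot f$ via the maximum principle applied to the auxiliary quantity $\Phi := (\tau - t)\dot f + f$, combining the evolution equation of Lemma \ref{fdotev} with the $L^\infty$ control on $f$ from Lemma \ref{fbound}. The decisive feature of the case $n=2$, $\rank_{J_{\pm}} = 1$ is that both $T_+^{\mathbb C}M$ and $T_-^{\mathbb C}M$ are complex line bundles; hence the induced metrics $g_\pm$ are essentially scalars, the contractions $g^{\bga_\pm \ga_\pm} g_{\ga_\pm \bga_\pm} \equiv 1$ are constant, and a comparison of $g_+$ against $g_-$ collapses to a single scalar inequality. This rigidity is what ultimately renders the lower order terms sign-definite.

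First I would record a formula for $\gD f$. Since $\gw = \til{\gw} + (\gd_+ \gd^c_+ - \gd_- \gd^c_-) f$, one has $f_{,\ga_+ \bga_+} = g_{\ga_+ \bga_+} - \til{g}_{\ga_+ \bga_+}$ and $f_{,\ga_- \bga_-} = \til{g}_{\ga_- \bga_-} - g_{\ga_- \bga_-}$. Contracting with $g^{-1}$ and using that $g^{\bga_+ \ga_+} g_{\ga_+ \bga_+} = g^{\bga_- \ga_-} g_{\ga_- \bga_-} = 1$ in the rank-one setting, the constant pieces cancel and I obtain
\begin{align*}
\gD f = - g^{\bga_+ \ga_+} \til{g}_{\ga_+ \bga_+} + g^{\bga_- \ga_-} \til{g}_{\ga_- \bga_-}.
\end{align*}
Next I would differentiate $\Phi$, using Lemma \ref{fdotev} for $\dt \dot f$. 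The term $-\dot f$ produced by differentiating the factor $(\tau - t)$ cancels the $\dot f$ arising from $\dt f$, while the Laplacian terms assemble as $\gD \Phi = (\tau - t) \gD \dot f + \gD f$. Substituting the expression for $\gD f$ and rewriting the background data via $\til{g}_\tau = \til{g}_t - (\tau - t) P(h_{\pm})$ (valid since $\til{\gw}_t = \gw_0 - t P(h_{\pm})$) merges the $P(h_{\pm})$ contributions into $\til{g}_\tau$, yielding
\begin{align*}
\dt \Phi = \gD \Phi + g^{\bga_+ \ga_+} (\til{g}_\tau)_{\ga_+ \bga_+} - g^{\bga_- \ga_-} (\til{g}_\tau)_{\ga_- \bga_-}.
\end{align*}

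Finally I would run the maximum principle. At a sufficiently large maximum of $\Phi$, the bound $\brs{f} \leq C(1+t)$ forces $\dot f$ to be large and positive; since $\dot f = \log \frac{\det g_+ \det h_{-}}{\det h_{+} \det g_-}$ with $g_\pm$ scalar, this means $g_+$ dominates $g_-$ by a large factor, so $g^{\bga_+ \ga_+}$ is small while $g^{\bga_- \ga_-}$ is large. The key point is then that the extra term $g^{\bga_+ \ga_+}(\til{g}_\tau)_{\ga_+ \bga_+} - g^{\bga_- \ga_-}(\til{g}_\tau)_{\ga_- \bga_-}$ becomes nonpositive once the dominance is strong enough, the negative piece swamping the positive one. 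Thus $\dt \Phi \leq \gD \Phi$ at the maximum, so $\Phi$ cannot exceed the resulting threshold; this gives a uniform upper bound for $\Phi$ on compact subintervals, and the symmetric argument (with $\dot f$ large and negative, so that $g_-$ now dominates) gives the lower bound. Since $\brs{f}$ is already controlled and $\tau$ is taken strictly below $\tau^*$, bounding $\Phi$ translates into the claimed bound on $\dot f$. The main obstacle is precisely this sign step: it is where the rank-one hypothesis is indispensable, since in higher rank the quantities $g^{\bga_\pm \ga_\pm} g_{\ga_\pm \bga_\pm}$ are no longer constant and the comparison of $g_+$ with $g_-$ no longer reduces to a single decisive scalar inequality.
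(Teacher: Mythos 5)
Your proposal is correct and follows essentially the same route as the paper: the same identity for $\gD f$ (with the rank-one cancellation $g^{\bga_\pm \ga_\pm} g_{\ga_\pm \bga_\pm} = 1$ made explicit, which the paper uses implicitly), the same auxiliary function $\Phi = (\tau - t)\dot{f} + f$, the same rewriting of the zeroth-order term via $\til{g}_\tau = \til{g}_t - (\tau - t)P(h_\pm) > 0$, and the same maximum-principle step in which largeness of $\dot{f}$ forces the scalar comparison $g^{\bga_-\ga_-} \geq e^{A/C} g^{\bga_+\ga_+}$ and hence nonpositivity of the extra term. The only quibble is in your closing aside: in higher rank the contractions $g^{\bga_\pm\ga_\pm}g_{\ga_\pm\bga_\pm}$ are still constants (equal to $\rank_\pm$); what fails is their cancellation when the ranks differ, and the passage from a determinant ratio bound to the needed inverse-metric comparison.
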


\subsection{Metric upper bound} \label{mubsec}

\begin{prop} \label{mub} Given the setup above, suppose there exists a constant
$K > 0$ so
that
\begin{align*}
g_t \geq \frac{1}{K} h.
\end{align*}
Then there exists a constant $C$ depending on $K$ and the background data so
that
\begin{align*}
\tr_h g + \brs{\del_+ \del_- f}^2 \leq C.
\end{align*}
\begin{proof} Fix a constant $A > 0$ yet to be
determined,
and
let
\begin{align*}
\Phi := \log \tr_h g + \tr_g h + A \brs{\del^+ \del^- f}^2.
\end{align*}
By combining Lemmas \ref{inversemetricev}, \ref{partialmetricev} and
\ref{torsionpotentialnormev} we obtain
\begin{gather} \label{mub5}
 \begin{split}
\left(\dt - \gD \right) \Phi \leq&\ \frac{1}{\tr_h g} \left[ \tr_h Q - g^{\bj i}
(h^{-1} g
h^{-1})^{\bgg \gd} \left(
\Omega^{h}
\right)_{i \bj \gd \bgg} \right]\\
&\ - \brs{\gU(g,h)}^2_{g^{-1},g^{-1},h} - \tr
g^{-1} h g^{-1} Q + g^{\bq p} g^{\bgb \ga} \left( \Omega^{h}
\right)_{p \bq \ga \bgb}\\
&\ + A \left[ - \brs{\bar{\N}
\del_+ \del_- f}^2 - \brs{\N \del_+ \del_- f}^2 - 2 \IP{Q, \del_+ \del_- f
\otimes_g \bar{\del}_+ \delb_- f} + 2 \Re \IP{\del_+ \del_- f, \Psi} \right].
 \end{split}
\end{gather}
It remains to establish an a priori upper bound for the right hand side.  To
facilitate estimates we choose complex coordinates at a point where $h_{i \bj} =
\gd_i^j$.  First we estimate
\begin{align} \label{mub10}
\frac{1}{\tr_h g} \tr_h Q \leq&\ \frac{C}{\tr_h g} \tr_h \left( \brs{T_g}_g^2 g
\right) \leq C \brs{T_g}_g^2.
\end{align}
Next we have
\begin{gather} \label{mub20}
\begin{split}
- \frac{1}{\tr_h g} g^{\bj i} (h^{-1} g
h^{-1})^{\bgg \gd} \left(
\Omega^{h}
\right)_{i \bj \gd \bgg} \leq&\ \frac{C K}{\tr_h g} h^{\bj i} \left( h^{-1} g
h^{-1} \right)^{\bgg \gd} \left( \Omega^h \right)_{i \bj \gd \bgg}\\
\leq&\ \frac{C K}{\tr_h g} \tr_h g \brs{\Omega^h}_h\\
\leq&\ C K.
\end{split}
\end{gather}
Next, since $Q \geq 0$ we have
\begin{align} \label{mub30}
 - \tr g^{-1} h g^{-1} Q \leq 0.
\end{align}
Also we estimate
\begin{align} \label{mub40}
 g^{\bq p} g^{\bgb \ga} \left( \Omega^{h}
\right)_{p \bq \ga \bgb} \leq C K^2 \brs{\Omega^h}_h \leq C K^2.
\end{align}
Next, using that $Q$ and $\del_+ \del_- f \otimes_g \delb_+ \delb_- f$ are
positive definite we obtain
\begin{align}
 - \brs{\N \del_+ \del_- f}^2 - 2 \IP{Q, \del_+ \del_- f
\otimes_g \bar{\del}_+ \delb_- f} \leq 0.
\end{align}
Plugging (\ref{mub10}) - (\ref{mub40}) into (\ref{mub5}) yields the preliminary
inequality
\begin{gather}
 \begin{split}
  \left( \dt - \gD \right) \Phi \leq&\ C \brs{T}^2 -
\brs{\gU(g,h)}^2_{g^{-1},g^{-1},h} - A
\brs{\bar{\N} \del_+ \del_- f}^2 + 2 A \Re \IP{\del_+ \del_- f, \mu} + C(K).
 \end{split}
\end{gather}
With this inequality one can more clearly see why the quantity $\Phi$ is chosen
as it is.  The main troublesome term is the $C \brs{T}^2$ arising from the
evolution of $\log \tr_h g$, the main quantity we want control over.  The
favorable $\brs{\bar{\N} \del_+ \del_- f}^2$ term will be used to control this,
which is why the term $\brs{\del_+ \del_- f}^2$ is introduced into $\Phi$.  On
the other hand this also introduces some ``junk'' terms arising from the
quantity $\Psi$, which involve the full Chern connection associated to $g$ (as
opposed to just the torsion).  We balance these out with the favorable
$\brs{\gU}^2$ term, which is the reason for the introduction of the term $\tr_g
h$ into $\Phi$.  We now make this precise.  We first observe the equality
\begin{align*}
 T_{\ga_- \gb_+ \bgg_+} =&\ g_{\gb_+ \bgg_+,\ga_-} = \til{g}_{\bg_+ \gg_+,\ga_-}
+ f_{,\gb_+ \ga_- \bgg_+} = \left( \til{T} + \bar{\N} \del_+ \del_- f
\right)_{\ga_- \gb_+ \bgg_+}.
\end{align*}
We conclude by the Cauchy-Schwarz inequality that
\begin{align} \label{mub50}
C \brs{T}^2 \leq&\ C \left( \brs{\til{T}}^2_g + \brs{\bar{\N} \del_+ \del_- f}^2
\right) \leq C(K) + C \brs{\bar{\N} \del_+ \del_- f}^2.
\end{align}
It remains to estimate the term $2 A \Re \IP{ \del_+ \del_- f, \Psi}$.  These
estimates are all similar.  For instance, we have
\begin{align*}
 g^{\bgs_+ \mu_+} & g^{\bgs_- \mu_-} g^{\bgb_- \ga_-} \N^g_{\mu_-}
\til{T}_{\mu_+ \ga_- \bgb_-} f_{,\bgs_- \bgs_+}\\
 =&\ g^{\bgs_+ \mu_+} g^{\bgs_- \mu_-} g^{\bgb_- \ga_-} \left[
\N^{\til{h}}_{\mu_-} \til{T}_{\mu_+ \ga_- \bgb_-} - \gU_{\mu_- \mu_+}^{\rho_+}
\til{T}_{\rho_+ \ga_- \bgb_-}  - \gU_{\mu_- \ga_-}^{\rho_-} \til{T}_{\mu_+
\rho_- \bgb_-} \right] f_{,\bgs_- \bgs_+} 
\end{align*}
Considering coordinates where $h_{i \bj} = \gd_{i}^j$ and $g_{i \bj} = \gl_i
\gd_i^j$ we have
\begin{align*}
 g^{\bgs_+ \mu_+} & g^{\bgs_- \mu_-} g^{\bgb_- \ga_-} \gU_{\mu_- \mu_+}^{\rho_+}
\til{T}_{\rho_+ \ga_- \bgb_-} f_{,\bgs_- \bgs_+}\\
\leq&\ g^{\bgs_+ \mu_+}
g^{\bgs_- \mu_-} \left[ \gd \gU_{\mu_- \mu_+}^{\rho_+} \gU_{\bgs_-
\bgs_+}^{\bar{\rho}_+} + \gd^{-1} \left( g^{\bgb_- \ga_-} \right)^2
\til{T}_{\rho_+ \ga_- \bgb_-} \til{T}_{\bar{\rho}_+ \bar{\ga}_- \gb_-}
f_{,\bgs_- \bgs_+} f_{,\mu_+ \mu_-} \right]\\
\leq&\ \gd \brs{\gU(g,h)}^2_{g^{-1},g^{-1},,h} + C(K,\gd^{-1},\til{T})
\brs{\del_+ \del_-
f}^2.
\end{align*}
By applying analogous estimates of this kind one arrives at the estimate, for
any choice of $\gd > 0$,
\begin{align} \label{mub60}
 2 A \IP{\del_+ \del_- f, \Psi} \leq&\ 2 A \gd
\brs{\gU(g,h)}^2_{g^{-1},g^{-1},h} +
C(\gd^{-1},K,\til{T}) \brs{\del_+ \del_- f}^2.
\end{align}
Plugging (\ref{mub50}) and (\ref{mub60}) into (\ref{mub40}) yields
\begin{align*}
 \left( \dt - \gD \right) \Phi \leq&\ \left(\gd A - 1 \right)
\brs{\gU(g,h)}^2_{g^{-1},g^{-1},h} + \left(C -  A \right)
\brs{\bar{\N} \del_+ \del_- f}^2 + C(K,\gd^{-1},\til{T}) \left(1 + \brs{\del_+
\del_- f}^2 \right).
\end{align*}
We choose $A$ large with respect to controlled constants and then $\gd =
\frac{1}{A}$, to arrive finally at the differential inequality
\begin{align*}
 \left( \dt - \gD \right) \Phi \leq&\ C(K,\gd^{-1},\til{T}) \left(1 +
\brs{\del_+ \del_- f}^2 \right) \leq C(K,\gd^{-1}, \til{T}) \left(1 + \Phi
\right).
\end{align*}
It follows from the maximum principle that there is a constant $C$ such that
\begin{align*}
\sup_{M \times [0,\tau)} \Phi \leq C.
\end{align*}
Since $\tr_g h + A \brs{\del^+ \del^- f}^2 > 0$, this immediately implies the
upper bound for $\tr_h g$.  Also, since $g_t$ is bounded below, we have that
$\log \tr_h g$ is bounded below, and so this implies the upper bound for
$\brs{\del^+ \del^- f}^2$.
\end{proof}
\end{prop}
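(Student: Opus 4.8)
The plan is to apply the maximum principle to a carefully chosen test function. Since we seek an upper bound on $\tr_h g$, the natural starting point is the evolution equation for $\log \tr_h g$ coming from Lemma \ref{partialmetricev}. However, this equation contains a term of the form $\tr_h Q$, which is quadratic in the torsion and carries an unfavorable sign, so the standard second-order estimate (as in the Aubin--Yau computation for K\"ahler--Ricci flow) does not close on its own. The key idea is to combine $\log \tr_h g$ with the norm of the torsion potential $\del_+ \del_- f$, whose evolution (Lemma \ref{torsionpotentialnormev}) supplies the favorable gradient term $-\brs{\bar{\N} \del_+ \del_- f}^2$. I would therefore set
\begin{align*}
\Phi := \log \tr_h g + \tr_g h + A \brs{\del_+ \del_- f}^2,
\end{align*}
for a large constant $A > 0$ to be fixed, and compute $\left(\dt - \gD\right)\Phi$ by assembling Lemmas \ref{inversemetricev}, \ref{partialmetricev}, and \ref{torsionpotentialnormev}.

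The observation that makes the scheme work is the identity $T_{\ga_- \gb_+ \bgg_+} = g_{\gb_+ \bgg_+,\ga_-} = \left( \til{T} + \bar{\N} \del_+ \del_- f \right)_{\ga_- \gb_+ \bgg_+}$, which expresses the flowing torsion in terms of the fixed background torsion plus $\bar{\N} \del_+ \del_- f$. Via Cauchy--Schwarz this lets me dominate the troublesome $\brs{T}^2$ term by $C(K) + C \brs{\bar{\N} \del_+ \del_- f}^2$, and this last piece is precisely what the coefficient $A$ in front of the favorable gradient term is designed to absorb once $A$ is taken large. Using the hypothesis $g \geq K^{-1} h$, the background curvature terms $\Omega^h$ contribute only controlled constants depending on $K$ and the fixed data, while the $Q$-terms that are manifestly nonnegative can simply be discarded.

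The main obstacle, and the reason for including the extra summand $\tr_g h$, is the $\Psi$ contribution $2A\,\Re\,\IP{\del_+ \del_- f, \Psi}$ arising from Lemma \ref{torsionpotentialnormev}. Unlike most terms in these evolution equations, $\Psi$ (see (\ref{psidef})) involves the \emph{full} Chern connection of the flowing metric $g$, not merely its torsion, so it cannot be controlled by $\brs{T}^2$ alone. The trick is to expand each such term using $\gU = \N^g - \N^h$ and apply Young's inequality with a small parameter $\gd$, splitting off a piece $\gd \brs{\gU(g,h)}^2_{g^{-1},g^{-1},h}$ together with lower-order contributions controlled by $\brs{\del_+ \del_- f}^2$. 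These $\brs{\gU}^2$ pieces are then absorbed by the favorable $-\brs{\gU(g,h)}^2_{g^{-1},g^{-1},h}$ term coming from the evolution of $\tr_g h$ in Lemma \ref{inversemetricev}.

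Finally, choosing $A$ large relative to the controlled constants and then $\gd = A^{-1}$, all the favorable gradient and connection terms dominate their unfavorable counterparts, and I expect to arrive at a differential inequality of the form $\left(\dt - \gD\right)\Phi \leq C(1 + \Phi)$, where $C$ depends only on $K$ and the background data. The maximum principle on $M \times [0,\tau)$ then yields $\sup \Phi \leq C$. Since $\tr_g h$ and $A \brs{\del_+ \del_- f}^2$ are nonnegative, this bounds $\log \tr_h g$, hence $\tr_h g$, from above; and since the hypothesis $g \geq K^{-1} h$ bounds $\log \tr_h g$ from below, the bound on $\Phi$ simultaneously controls $\brs{\del_+ \del_- f}^2$, completing the proof.
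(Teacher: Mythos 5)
Your proposal follows essentially the same argument as the paper's proof: the identical test function $\Phi = \log \tr_h g + \tr_g h + A \brs{\del_+ \del_- f}^2$, the same combination of Lemmas \ref{inversemetricev}, \ref{partialmetricev}, and \ref{torsionpotentialnormev}, the same key identity expressing $T$ as $\til{T} + \bar{\N} \del_+ \del_- f$, the same Young's-inequality absorption of the $\Psi$ terms into $-\brs{\gU}^2$ with $\gd = A^{-1}$, and the same maximum-principle conclusion. The argument is correct and matches the paper in both structure and detail.
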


\begin{rmk} This estimate in fact holds generally for solutions to pluriclosed
flow, where a more general ``torsion potential'' quantity plays the role of
$\del_+ \del_- f$.  This will be expounded upon in a future work.
\end{rmk}

\section{Harnack estimate} \label{harnack}

The purpose of this section is to establish $C^{\infty}$-regularity of a
solution to the pluriclosed flow in this setting assuming uniform equivalence of
the metric.  As stated in the introduction, the corresponding issue for
K\"ahler-Ricci flow can be resolved in at least two ways.  First, one can
try to directly apply the maximum principle to the norm of the Chern connection
potential, which is known as ``Calabi's third-order estimate'' \cite{Calabi}. 
Alternatively, apply the Evans-Krylov \cite{Evans,Krylov} theory. 
Both of
these methods rely on the ``convexity'' of the Monge-Ampere operator.  In
particular, the Evans-Krylov estimates apply only in the setting of a convex
operator, whilst in Calabi's approach the convexity appears in the form of a
favorable quadratic nonlinearity which arises in the relevant maximum principle
arguments.  Conversely, equation (\ref{scalarflow}) is \emph{nonconvex}, and so
neither of these approaches can succeed directly.  

In what follows we prove the relevant estimate in the case $n=2$.  The
restriction to dimension $n=2$ comes from Lemma \ref{harnacklemma}, where the
special structure of the torsion in this dimension is exploited to obtain a
favorable inequality for the evolution of the gradient of solutions to the heat
equation against a pluriclosed flow background.

\begin{thm} \label{EKregularity} Let $(M^4, g_0, J_{\pm})$ be a generalized
K\"ahler surface.  Let $g_t$ denote the solution to pluriclosed flow with
initial condition $g_0$.  Suppose the solution exists on $[0,T)$, and there is a
constant $A > 0$ so that
\begin{align*}
 A^{-1} g_0 \leq g_t \leq A g_0.
\end{align*}
Then given $k \in \mathbb N$ there
exists $C(A,g_0,k)$ such
that
\begin{align}
\sup_{M \times [0,T)} \brs{g}_{C^k} \leq C.
\end{align}
In particular, $g_T := \lim_{t \to T} g_t$ exists and is smooth, and the flow
extends smoothly past time $T$.
\end{thm}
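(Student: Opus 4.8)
The plan is to run a parabolic bootstrap, using the uniform equivalence as the sole quantitative input. The hypothesis $A^{-1} g_0 \leq g_t \leq A g_0$ furnishes two things at once: uniform ellipticity of the Chern Laplacian $\gD = g^{\bj i} \N_i \N_{\bj}$, and, since $g_t \geq \frac{1}{K} h$ for a fixed background $h$ comparable to $g_0$, the hypothesis of Proposition \ref{mub}. Invoking that proposition gives $\tr_h g + \brs{\del_+ \del_- f}^2 \leq C$, which together with the metric equivalence bounds every second derivative of the potential $f$ uniformly. Thus the whole problem reduces to a single higher-order estimate: a uniform bound on the torsion $T$, whose only nonzero components are $T_{\ga_+ \gb_- \bgg_-} = g_{\gb_- \bgg_-, \ga_+}$ and $T_{\ga_- \gb_+ \bgg_+} = g_{\gb_+ \bgg_+, \ga_+}$ by (\ref{torsionform}), i.e.\ the first derivatives of the metric, or the third derivatives of $f$.

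The heart of the matter, and the step I expect to be the main obstacle, is this torsion bound, which plays the role of Calabi's third-order estimate and is precisely where the nonconvexity bites and where $n=2$ is used. As recorded in the proof of Proposition \ref{mub}, the torsion is captured by the gradient of the torsion potential, $T_{\ga_- \gb_+ \bgg_+} = (\til{T} + \bar{\N} \del_+ \del_- f)_{\ga_- \gb_+ \bgg_+}$, so it suffices to bound $\brs{\N \del_+ \del_- f}^2$. The generic obstruction is that the evolution of such a third-order quantity produces an unfavorable quadratic first-order nonlinearity; the key structural fact, noted in the remark after Lemma \ref{torsionpotentialev}, is that for $\del_+ \del_- f$ this nonlinearity vanishes identically, so the torsion potential and its gradient evolve by honest heat equations with controlled inhomogeneities (Lemmas \ref{torsionpotentialev}, \ref{torsionpotentialnormev}). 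I would therefore apply Lemma \ref{harnacklemma} --- the favorable evolution inequality for the gradient of a heat-equation solution against the pluriclosed background, valid in dimension $n=2$ --- to $\del_+ \del_- f$, and run the maximum principle on an auxiliary quantity of the schematic shape
\begin{align*}
\Phi = \brs{\N \del_+ \del_- f}^2 + B \left( \tr_h g + \brs{\del_+ \del_- f}^2 \right),
\end{align*}
with $B$ a large constant. The good negative term produced by Lemma \ref{harnacklemma} should dominate the dangerous gradient-of-torsion contributions, while the already-bounded lower-order quantities from Proposition \ref{mub} absorb the remainder, so that I expect a differential inequality $(\dt - \gD) \Phi \leq C(1 + \Phi)$. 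The maximum principle then yields a uniform bound on $\brs{\N \del_+ \del_- f}^2$, and hence on $T$.

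Once the torsion, and thus the first derivatives of $g$, are bounded uniformly, the coefficients $g^{\bj i}$ of the linearization of (\ref{scalarflow}) lie in a uniform parabolic Hölder space, and the equation may be read as a linear uniformly parabolic scalar equation for $f$ with Hölder coefficients. Differentiating (\ref{scalarflow}) and applying interior parabolic Schauder estimates iteratively then promotes $f$ to $C^{k,\ga}$ for every $k$, with bounds uniform on $M \times [0,T)$; near $t=0$ these bounds follow from the smoothness of the initial datum $g_0$, which also controls the initial value of $\Phi$ in the argument above. Finally, the uniform $C^k$ control forces $g_t$ to converge in $C^{\infty}$ to a smooth limit $g_T$ as $t \to T$, and short-time existence applied with initial condition $g_T$ (via Theorem \ref{mainthm1}) extends the flow smoothly past $T$, completing the proof.
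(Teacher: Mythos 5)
There is a genuine gap, and it sits exactly at the step you yourself flag as "the heart of the matter." Your plan is to bound the torsion by a direct maximum principle on $\Phi = \brs{\N \del_+ \del_- f}^2 + B(\tr_h g + \brs{\del_+\del_- f}^2)$, expecting $(\dt - \gD)\Phi \leq C(1+\Phi)$. But no such differential inequality is available with constants depending only on the allowed data. Lemma \ref{torsionpotentialev} gives a clean heat equation for $\del_+\del_- f$ itself, and Lemma \ref{torsionpotentialnormev} for its norm; neither statement passes to the \emph{gradient}. To compute $(\dt - \gD)\brs{\N \del_+\del_- f}^2$ you must (i) differentiate the connection in time, producing $\dt \gG \sim \N(-S+Q)$, i.e.\ derivatives of curvature; (ii) commute $\N$ past $\gD$, producing $\Omega * \N(\del_+\del_- f)$ and $\N\Omega * (\del_+\del_- f)$ terms; and (iii) differentiate $\Psi$, which produces $\N T$ of the \emph{evolving} metric. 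All of these are second-or-third-order quantities in $g$ whose boundedness is equivalent to what you are trying to prove, so the argument is circular: the "controlled inhomogeneities" you invoke are controlled only at the level of $\del_+\del_- f$, not one derivative up. This is precisely the Calabi third-order estimate, and the paper states explicitly (introduction and the opening of \S \ref{harnack}) that it cannot be run directly here because (\ref{scalarflow}) is nonconvex: the favorable quadratic structure that closes Calabi's argument in the K\"ahler case is absent, and the negative terms you have ($-\brs{\gU}^2$, $-\brs{\bar\N\del_+\del_- f}^2$) cannot absorb terms quadratic in the top-order unknowns. A secondary misuse: Lemma \ref{harnacklemma} is a statement about $\brs{\delb f}^2$ for \emph{scalar} solutions of the time-dependent heat equation (its proof hinges on the commutation identity $\delb \gD f = \bar\gD \delb f + T\circ \del\delb f$, special to exact $(0,1)$-forms); it does not apply to the tensor $\del_+\del_- f$.

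The paper's actual proof avoids any pointwise third-order estimate. It argues by contradiction and compactness: if $\gU = \N^C_{g_t} - \N^C_{g_0}$ (together with $\brs{\Omega}$, $\brs{\N T}$) blows up, Proposition \ref{blowupcons} rescales at the blowup scale and uses the smoothing estimates of \cite{STHCF} plus Arzela--Ascoli to extract a \emph{nonflat} ancient solution of pluriclosed flow on $\mathbb{C}^2 \times (-\infty,0]$ uniformly equivalent to the Euclidean metric. Then a Liouville-type theorem (Proposition \ref{heatrigid}) shows any bounded ancient solution of the heat equation on such a background is constant --- this is where Lemma \ref{harnacklemma} and the dimension restriction $n=2$ genuinely enter, applied to scalar quantities. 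Finally, Theorem \ref{rigiditythm} applies this rigidity first to $\dot f$ (a bounded heat solution by Lemma \ref{fdotev}, since the background is flat), and then, using Lemma \ref{n2traceev} with $\del^+\dot f = 0$, to $\tr_{h_\pm} g_\pm$, forcing the blowup limit to be flat --- a contradiction. Your closing bootstrap (differentiating the equation twice and using Krylov--Safonov/Schauder once the metric is $C^\alpha$ in the parabolic sense) is fine in spirit, but it only starts once the third-order bound is in hand, and that bound is exactly what your proposal does not establish.
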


\begin{rmk} The proof of Theorem \ref{EKregularity} proceeds in three steps. 
First we show that if the statement were false one can construct an ancient
solution to pluriclosed flow on $\mathbb C^2$ via a blowup argument, which has
the same uniform metric bounds, and is nonflat.  Next we show that, for such a
solution, any bounded ancient solution to the time-dependent Chern-Laplacian
heat equation is constant.  We then apply this rigidity to some special
geometric quantities which imply flatness of the underlying pluriclosed flow,
which is a contradiction which finishes the proof.
\end{rmk}

\subsection{Construction of blowup limits}

\begin{prop} \label{blowupcons} Let $(M^{2n}, g_t, J)$ be a solution to
pluriclosed flow on a
compact manifold $M$.  Suppose the solution exists on $[0,\tau)$
, and there is a constant $A > 0$ so that
\begin{align*}
 A^{-1} g_0 \leq g_t \leq A g_0.
\end{align*}
 Suppose furthermore that, setting $\gU := \N^C_{g_t} - \N^C_{g_0}$, one has
 \begin{align} \label{connblowup}
  \limsup_{t \to T} \brs{\gU}^2_{g_t} = \infty.
 \end{align}
There exists a blowup sequence of solutions which converges to a nonflat
solution of pluriclosed flow on $(-\infty,0] \times \mathbb C^n$ such that
\begin{align} \label{blowupbounds}
A^{-1} g_E \leq g_t \leq A g_E, \qquad \brs{g_t}_{C^k} \leq C(k,n,A).
\end{align}
where $g_E$ denotes the standard Euclidean metric on $\mathbb C^n$.
\begin{proof} Given a solution to pluriclosed flow as in the statement, we set
\begin{align*}
 \Phi(x,t) = \brs{\gU}^2 + \brs{\Omega} + \brs{\N T}.
\end{align*}
Assuming (\ref{connblowup}) holds, we choose a sequence of points
$\{(x_i,t_i)\}$ such that $t_i \to \tau$ and
\begin{align*}
 \Phi(x_i,t_i) = \sup_{M \times [0,t_i)} \Phi.
\end{align*}
Certainly $\lim_{i \to \infty} \Phi(x_i,t_i) = \infty$.  Since $M$ is compact
there exists $x_{\infty} \in M$ such that $\lim_{i \to \infty} x_i =
x_{\infty}$.  By fixing local complex coordinates around $x_{\infty}$ (mapping
$x_{\infty}$ to $0$) such that at $x_{\infty}$ one has $(g_0)_{i\bj}= \gd_i^j$. 
Now set $\gl_i = \Phi(x_i,t_i)^{-\frac{1}{2}}$, and in the fixed coordinates we
construct rescaled solutions $g_i(x,t) = g (x_i + \gl_i x, t_i + \gl_i^2 t)$. 
By construction one obtains that the metrics $g_i$ have uniform $C^2$ bounds on
$B_{\gl_i^{-1}}(0) \times [- \gl_i^{-2} t_i,0]$.  Moreover, by the smoothing
estimates of \cite{STHCF} Theorem 1.1 we obtain uniform estimates on all
derivatives of the Chern curvature and its torsion, which implies uniform $C^k$
bounds on the metric.  Also by construction, the rescaled metrics satisfy
$\Phi(0,0) = 1$.  One obtains a limit using the Arzela-Ascoli theorem which
satisfies the condition (\ref{blowupbounds}) by construction.
\end{proof}
\end{prop}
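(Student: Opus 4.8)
The plan is to run a standard parabolic point-picking (blowup) argument, converting the hypothesis (\ref{connblowup}) into a sequence of rescalings whose geometry does not degenerate. The first step is to fix a nonnegative quantity, scale-invariant of parabolic weight $-2$, that dominates the full second-order geometry of the flow; a natural choice is
\[
\Phi := \brs{\gU}^2 + \brs{\Omega} + \brs{\N T}.
\]
Under (\ref{connblowup}) one has $\limsup_{t \to \tau} \sup_M \Phi = \infty$, so I would choose points $(x_i,t_i)$ with $t_i \to \tau$ realizing the running supremum, $\Phi(x_i,t_i) = \sup_{M \times [0,t_i)} \Phi$, forcing $\Phi(x_i,t_i) \to \infty$. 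Since $M$ is compact, after passing to a subsequence $x_i \to x_\infty$, and I fix local holomorphic coordinates centered at $x_\infty$ in which $(g_0)_{i \bj} = \gd_i^j$ at the origin.

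The second step is the parabolic rescaling. Setting $\gl_i := \Phi(x_i,t_i)^{-1/2} \to 0$, I define in the fixed coordinates the rescaled metrics $g_i(x,t) := g(x_i + \gl_i x, t_i + \gl_i^2 t)$. Because the pluriclosed flow $\dt \gw = - P(\gw)$ is invariant under this parabolic dilation, each $g_i$ is again a solution, now living on the enlarging cylinder $B_{\gl_i^{-1}}(0) \times [-\gl_i^{-2} t_i, 0]$. The normalization is chosen precisely so that $\Phi$ transforms with weight $-2$: this gives $\Phi_{g_i} \leq 1$ on the whole cylinder together with $\Phi_{g_i}(0,0) = 1$, which simultaneously bounds the rescaled curvature and certifies that any limit will be nonflat. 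The uniform equivalence $A^{-1} g_0 \leq g_t \leq A g_0$ is itself scale-invariant, and as $\gl_i \to 0$ the rescaled reference metric $g_0$ converges in $C^\infty_{loc}$ to the Euclidean metric $g_E$, so that the comparison in (\ref{blowupbounds}) persists in the limit.

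The third step promotes the uniform $C^2$ control encoded in $\Phi_{g_i} \leq 1$ to uniform $C^k$ control. Here I would invoke the interior smoothing estimates for pluriclosed flow (in the spirit of Shi's estimates) established in \cite{STHCF} Theorem 1.1, which bound all covariant derivatives of the Chern curvature and of the torsion on slightly smaller parabolic cylinders in terms of a zeroth-order curvature bound. These yield $\brs{g_i}_{C^k} \leq C(k,n,A)$ uniformly on compact subsets, so Arzela--Ascoli produces a smooth subsequential limit $g_\infty$ on $(-\infty,0] \times \mathbb C^n$; it solves pluriclosed flow, obeys (\ref{blowupbounds}), and is nonflat since $\Phi_{g_\infty}(0,0) = 1$.

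The main obstacle is arranging that the single scalar $\Phi$ does two jobs at once: that its finiteness away from the blowup point produces genuine uniform second-order bounds on the $g_i$ (so the metrics neither degenerate nor blow up, and the existence interval truly exhausts $(-\infty,0]$), and that its value $1$ at the base point survives the limit to guarantee nonflatness. The delicate technical input is the application of the smoothing estimates of \cite{STHCF}: one must verify that a bound on $\Phi$ alone controls enough of the Chern curvature and torsion to trigger those estimates uniformly in $i$ on the expanding cylinders, after which the remaining compactness is routine.
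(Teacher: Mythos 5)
Your proposal follows essentially the same argument as the paper: the identical blowup quantity $\Phi = \brs{\gU}^2 + \brs{\Omega} + \brs{\N T}$, the same point-picking at the running supremum, the same parabolic rescaling with $\gl_i = \Phi(x_i,t_i)^{-1/2}$, the appeal to the smoothing estimates of \cite{STHCF} Theorem 1.1 for higher-order bounds, and Arzela--Ascoli for the limit, with nonflatness certified by $\Phi(0,0)=1$. Your added remarks on scale invariance of the metric equivalence and on the weight $-2$ normalization are correct elaborations of points the paper leaves implicit.
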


\subsection{Rigidity of ancient heat equation solutions}

The crucial vanishing result we need is a general result for ancient solutions
to the heat equation against a pluriclosed flow background which is uniformly
equivalent to the standard metric on $\mathbb C^2$.

\begin{lemma} \label{harnacklemma} Let $(M^{2n}, \gw_t, J)$ be a solution to
pluriclosed flow, and
suppose $f_t \in C^{\infty}(M)$ is a one-parameter family satisfying
\begin{align*}
 \dt f =&\ \gD_{g_t} f.
\end{align*}
Then
\begin{align} \label{gradfev}
 \dt \brs{\delb f}^2 =&\ \gD \brs{\delb f}^2 - \brs{\N \delb f}^2 -
\brs{\bar{\N} \delb f}^2 -
\IP{Q, \delb f \otimes \del f} + 2 \Re \IP{\delb f, T \circ \del \delb f}.
\end{align}
\begin{proof} To begin we observe an identity
\begin{align*}
\left(\delb \gD f\right)_{\bi} =&\ g^{\bk j} \N_{\bi} \N_j \N_{\bk} f\\
 =&\ g^{\bk j} \N_{\bi} \N_{\bk} \N_j f\\
 =&\ g^{\bk j} \left[ \N_{\bk} \N_{\bi} \N_j f + T_{\bk \bi}^{\bp} \N_{\bp} \N_j
f \right]\\
 =&\ g^{\bk j} \left[ \N_{\bk} \N_j \N_{\bi} f + T_{\bk \bi}^{\bp} \N_{\bp} \N_j
f \right]\\
 =&\ \bar{\gD} \N_{\bi} f + g^{\bk j} T_{\bk \bi}^{\bp} \N_{\bp} \N_j f\\
 =&\ \left( \bar{\gD} \delb f + T \circ \del \delb f \right)_{\bi}.
\end{align*}
Thus we may compute
\begin{align*}
 \dt \delb f =&\ \delb \gD f = \bar{\gD}_{g_t} \delb f + T \circ \del \delb f.
\end{align*}
The result now follows from Lemma \ref{01formgenev}.
\end{proof}
\end{lemma}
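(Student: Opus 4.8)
The plan is to reduce the statement to the general $(p,0)$-form evolution equation of Lemma \ref{01formgenev} by first computing how $\delb f$ itself evolves. The engine of the argument is a commutation identity relating $\delb \gD f$ to the conjugate Chern Laplacian of $\delb f$. First I would write $\gD f = g^{\bk j} \N_j \N_{\bk} f$ and use that the Chern connection is metric, so that $g^{\bk j}$ passes freely through $\N_{\bi}$, giving
\begin{align*}
\left( \delb \gD f \right)_{\bi} = g^{\bk j} \N_{\bi} \N_j \N_{\bk} f.
\end{align*}
The identity is then obtained by three commutations: mixed second covariant derivatives of the function $f$ commute, which lets me swap $\N_j \N_{\bk} f = \N_{\bk} \N_j f$; the commutator of two anti-holomorphic derivatives is controlled purely by torsion, since the Chern curvature is of type $(1,1)$ and its $(0,2)$-part therefore vanishes, so moving $\N_{\bi}$ past $\N_{\bk}$ costs only a term $T_{\bk \bi}^{\bp} \N_{\bp} \N_j f$ with no curvature contribution; finally a second mixed commutation $\N_{\bi} \N_j f = \N_j \N_{\bi} f$ reassembles the leading term as $\bar\gD \delb f$. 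This yields
\begin{align*}
\delb \gD f = \bar\gD \delb f + T \circ \del \delb f.
\end{align*}

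Second, since $f$ solves $\dt f = \gD f$, applying $\delb$ and commuting it past $\dt$ shows that $\delb f$ satisfies the inhomogeneous heat equation $\dt \delb f = \bar\gD \delb f + T \circ \del \delb f$, with forcing term $\Psi = T \circ \del \delb f$. As $f$ is real-valued, $\delb f = \bar{\del f}$ and $\brs{\delb f}^2 = \brs{\del f}^2$, so this is exactly (the conjugate of) the hypothesis of Lemma \ref{01formgenev} with $p = 1$. Applying that lemma and conjugating the forcing pairing reproduces the five terms on the right-hand side: the Laplacian of the norm, the two favorable gradient terms $-\brs{\N \delb f}^2 - \brs{\bar\N \delb f}^2$, the metric-variation term $-\IP{Q, \delb f \otimes \del f}$, and the torsion pairing $2 \Re \IP{\delb f, T \circ \del \delb f}$.

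The step requiring the most care is the commutation identity, specifically verifying that the only surviving correction beyond $\bar\gD \delb f$ is the single torsion term. This hinges on the $(1,1)$-type of the Chern curvature to kill the would-be curvature term in the anti-holomorphic commutator, together with the automatic cancellation of the $S$-curvature already built into Lemma \ref{01formgenev}, which absorbs the curvature produced by the variation of the norm itself. Once the identity $\dt \delb f = \bar\gD \delb f + T \circ \del \delb f$ is secured, the conclusion is an immediate citation of Lemma \ref{01formgenev}, and no further estimate is needed.
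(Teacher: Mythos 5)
Your proposal is correct and follows essentially the same route as the paper: the identical three-step commutation argument (mixed derivatives commute, the antiholomorphic commutator produces only torsion since the Chern curvature has no $(0,2)$ part, then a second mixed commutation) yielding $\delb \gD f = \bar{\gD}\delb f + T \circ \del \delb f$, followed by citing Lemma \ref{01formgenev}. Your explicit handling of the conjugation needed to match the $(p,0)$-form hypothesis of that lemma is a point the paper leaves implicit, but it is the same proof.
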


\begin{lemma} Let $(M^{4}, \gw_t, J)$ be a solution to pluriclosed flow, and
suppose $f_t \in C^{\infty}(M)$ is a one-parameter family satisfying
\begin{align*}
 \dt f =&\ \gD_{g_t} f.
\end{align*}
Then
\begin{align*}
 \dt \brs{\delb f}^2 \leq&\ \gD \brs{\delb f}^2 - \brs{\bar{\N} \delb f}^2.
\end{align*}
\begin{proof} We derive a special inequality for a $(0,1)$-form $\gb$ in this
context.  Since $n=2$, (\cite{ST1} Lemma 4.4) implies that
$Q = \frac{1}{2} \brs{T}^2 g$, thus $- \IP{Q, \gb \otimes \bar{\gb}} = -
\frac{1}{2} \brs{T}^2 \brs{\gb}^2$.  Also note that, in a unitary frame for
$g$, using the skew-symmetry of $T$ one has
\begin{align*}
 \brs{T}^2 =&\ \sum_{i,j,k=1}^2 T_{i j \bk} T_{\bi \bj k} = 2 (\brs{T_{1 2
\bar{1}}}^2 + \brs{T_{1 2 \bar{2}}}^2)
\end{align*}
Let us now analyze the quantity $\Re \IP{\gb,
T \circ \del \gb}$.  Working again in a unitary frame for $g$ we see
\begin{align*}
 \IP{\gb, T \circ \del \gb} =&\ \sum_{i,j,l = 1}^2 T^{}_{\bi \bj l}
\N_i \gb_{\bl} \gb_{j}\\
 =&\ \gb_1 \sum_{l=1}^2 T^{}_{\bar{2} \bar{1} l} \N_2 \gb_{\bl} + \gb_2
\sum_{l=1}^2 T^{}_{\bar{1} \bar{2} l} \N_1 \gb_{\bl}\\
=&\ \gb_1 T^{}_{\bar{2} \bar{1} 1} \N_2 \gb_{\bar{1}} + \gb_1
T^{}_{\bar{2} \bar{1} 2} \N_2 \gb_{\bar{2}} + \gb_2 T^{}_{\bar{1}\bar{2}
1} \N_1 \gb_{\bar{1}} + \gb_2 T^{}_{\bar{1}\bar{2} 2} \N_1 \gb_{\bar{2}}\\
\leq&\ \frac{1}{2} \left[ \brs{\gb_1}^2 \brs{T^{}_{\bar{2}\bar{1} 1}}^2 +
\brs{\N_2 \gb_{\bar{1}}}^2 + \brs{\gb_1}^2 \brs{T^{}_{\bar{2}\bar{1} 2}}^2 +
\brs{\N_2 \gb_{\bar{2}}}^2 \right.\\
&\ \left. \qquad + \brs{\gb_2}^2 \brs{T^{}_{\bar{1}\bar{2} 1}}^2 + \brs{\N_1
\gb_{\bar{1}}}^2 + \brs{\gb_2}^2 \brs{T^{}_{\bar{1}\bar{2} 2}}^2 + \brs{\N_1
\gb_{\bar{2}}}^2 \right]\\
=&\ \frac{1}{2} \left[ (\brs{\gb_1}^2 + \brs{\gb_2}^2) \left(
\brs{T^{}_{\bar{2} \bar{1} 1}}^2 + \brs{T^{}_{\bar{2} \bar{1} 2}}^2
\right) + \brs{\N \gb}^2 \right]\\
=&\ \frac{1}{2} \brs{\N \gb}^2 + \frac{1}{4} \brs{T}^2 \brs{\gb}^2.
\end{align*}
Collecting the above discussion it follows that
\begin{align*}
- \brs{\N \gb} + 2 \Re \IP{\gb, T^{} \circ \del \gb} - \IP{Q, \gb \otimes
\bar{\gb}} \leq 0.
\end{align*}
Using this inequality in (\ref{gradfev}) with $\gb = \delb f$ yields the result.
\end{proof}
\end{lemma}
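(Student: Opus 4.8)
The plan is to begin not from scratch but from the exact evolution equation already furnished by Lemma \ref{harnacklemma}, specialized to $\gb = \delb f$:
\begin{align*}
\dt \brs{\delb f}^2 = \gD \brs{\delb f}^2 - \brs{\N \delb f}^2 - \brs{\bar{\N} \delb f}^2 - \IP{Q, \delb f \otimes \del f} + 2 \Re \IP{\delb f, T \circ \del \delb f}.
\end{align*}
Comparing with the claimed inequality, the entire problem reduces to the purely pointwise algebraic bound
\begin{align*}
- \brs{\N \gb}^2 - \IP{Q, \gb \otimes \bar{\gb}} + 2 \Re \IP{\gb, T \circ \del \gb} \leq 0,
\end{align*}
valid for an \emph{arbitrary} $(0,1)$-form $\gb$ on a pluriclosed surface. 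I would prove this as a statement about $\gb$ alone, never using that it is exact.

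The decisive input special to $n=2$ is the identity $Q = \tfrac{1}{2}\brs{T}^2 g$ (from \cite{ST1} Lemma 4.4), which collapses the curvature term to the scalar expression $-\IP{Q, \gb \otimes \bar{\gb}} = -\tfrac{1}{2}\brs{T}^2 \brs{\gb}^2$. Passing to a unitary frame for $g_t$ and invoking the skew-symmetry of the Chern torsion in its two holomorphic indices, in complex dimension two only the two components $T_{12\bar{1}}$ and $T_{12\bar{2}}$ survive, so $\brs{T}^2 = 2(\brs{T_{12\bar{1}}}^2 + \brs{T_{12\bar{2}}}^2)$. This component count is exactly what will let the torsion norm match the cross term.

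Next I would expand $\Re \IP{\gb, T \circ \del \gb}$ in the same unitary frame. After the skew-symmetry collapses the torsion summation, one is left with four products of schematic form $\gb_i\, T_{\bar{j}\bar{k}l}\, \N_m \gb_{\bar{n}}$, and I would estimate each by Young's inequality $ab \leq \tfrac{1}{2}(a^2+b^2)$, peeling off a full first-order piece and a torsion-weighted zeroth-order piece. Summing, the first-order contributions assemble precisely into $\tfrac{1}{2}\brs{\N \gb}^2$, and the zeroth-order contributions assemble precisely into $\tfrac{1}{4}\brs{T}^2\brs{\gb}^2$, using the two-component structure above.

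Finally I would combine the pieces. The factor $2\Re$ promotes the first-order piece to $\brs{\N \gb}^2$, cancelling the $-\brs{\N \gb}^2$ term exactly, while the resulting $2 \cdot \tfrac{1}{4}\brs{T}^2\brs{\gb}^2 = \tfrac{1}{2}\brs{T}^2\brs{\gb}^2$ cancels $-\IP{Q,\gb\otimes\bar{\gb}}$ exactly. The reduced expression is therefore $\leq 0$, and feeding this back into the Lemma \ref{harnacklemma} evolution equation gives the claim. The \emph{main obstacle}, and the reason the statement is confined to dimension two, is that both cancellations are saturating rather than merely favorable: it is exactly in $n=2$ that $Q$ becomes a scalar multiple of $g$ and that the constrained torsion components make the Young estimate of the cross term balance to zero against the negative terms. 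In higher dimension neither the scalar form of $Q$ nor this sharp matching of constants persists, so the argument does not extend.
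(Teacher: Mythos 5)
Your proposal is correct and is essentially identical to the paper's own argument: both reduce via Lemma \ref{harnacklemma} to the pointwise inequality $-\brs{\N \gb}^2 - \IP{Q,\gb\otimes\bar{\gb}} + 2\Re\IP{\gb, T\circ\del\gb} \leq 0$ for an arbitrary $(0,1)$-form, invoke $Q = \tfrac{1}{2}\brs{T}^2 g$ from (\cite{ST1} Lemma 4.4), and close the estimate with the same unitary-frame expansion and Young's inequality, yielding the same exact cancellations. Your closing observation about why the constants saturate precisely in $n=2$ matches the role this dimension restriction plays in the paper.
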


\begin{prop} \label{heatrigid} Let $g_t$ be a solution to pluriclosed flow on
$\mathbb C^2 \times (-\infty,0]$ satisfying (\ref{blowupbounds}).  Suppose
$f(x,t)$ satisfies
\begin{align*}
 \dt f =&\ \gD f, \quad \sup_{(-\infty,0] \times \mathbb C^2} \brs{f} < \infty.
\end{align*}
Then $f$ is constant.
\begin{proof} Fix a constant $R > 1$, and let $\eta$ denote a cutoff function
for the ball
of radius $R > 0$.  Using the uniform bounds on the metric and connection from
(\ref{blowupbounds}) it
follows that
\begin{align*}
 \brs{\N \eta} + \brs{\N^2 \eta} \leq \frac{C}{R}.
\end{align*}
Now let
\begin{align*}
 \Phi(x,t) = \eta \left[ (t + \sqrt{R}) \brs{\delb f}^2 + f^2 \right]
\end{align*}
We directly compute
\begin{align*}
 \dt \Phi \leq&\ \eta \left[ \brs{\delb f}^2 + (t + \sqrt R) \gD \brs{\delb
f}^2
+ \gD
(f^2) - \brs{\delb f}^2 \right]\\
=&\ \gD \Phi - \gD \eta \left[ (t + \sqrt R) \brs{\delb f}^2 + f^2 \right]
-
\IP{\N
\eta, \N  \left[ (t + \sqrt R) \brs{\delb f}^2 + f^2 \right]}\\
\leq&\ \gD \Phi + \frac{ C\brs{t + \sqrt R}}{R}
\end{align*}
Applying the maximum principle to this inequality on $[-\sqrt R,0]$ yields the
estimate
\begin{align*}
 \sup_{\mathbb C^2} \Phi(x,0) \leq&\ \sup_{\mathbb C^2} \Phi(x,-\sqrt R) + C
\leq
\sup_{\mathbb C^2} f^2 + C \leq A + C.
\end{align*}
It follows that, for all $R$,
\begin{align*}
\sqrt{R} \brs{\delb f}^2(0,0) \leq&\ \Phi(0,0) \leq C
\end{align*}
It follows that $\brs{\delb f}^2(0,0) = 0$.  But this estimate can be
repeated
using cutoff functions centered at an arbitrary point in $\mathbb C^2 \times
(-\infty,0]$, and so $\delb f \equiv 0$ identically.  Thus at each time slice
$f$ is constant, and then from the evolution equation for $f$ it follows
that
$\dt f \equiv 0$, and so $f$ is constant on $\mathbb C^2 \times
(-\infty,0]$.
\end{proof}
\end{prop}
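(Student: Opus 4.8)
The plan is to show that the spatial gradient $\brs{\delb f}^2$ satisfies a differential inequality with a strictly favorable structure, and then to run a \emph{localized} maximum principle on parabolic cylinders whose temporal and spatial scales are matched so that every cutoff error stays bounded uniformly as the radius tends to infinity. The engine is the inequality established immediately above, namely
$$\dt \brs{\delb f}^2 \le \gD \brs{\delb f}^2 - \brs{\bar{\N} \delb f}^2,$$
which holds precisely because $n=2$ (it uses $Q = \tfrac12 \brs{T}^2 g$ and the skew-symmetry of $T$ to absorb the torsion cross-terms).

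First I would combine this with the identity $\dt(f^2) = \gD(f^2) - \brs{\delb f}^2$ and form, for a fixed $R > 1$, the time-weighted quantity $G := (t + \sqrt{R})\brs{\delb f}^2 + f^2$ on the cylinder $\mathbb{C}^2 \times [-\sqrt{R}, 0]$. Differentiating the weight produces a term $+\brs{\delb f}^2$ which is exactly cancelled by the $-\brs{\delb f}^2$ coming from $\dt(f^2)$; after discarding the favorable $-(t+\sqrt{R})\brs{\bar{\N}\delb f}^2$, this leaves the clean subsolution inequality $(\dt - \gD)G \le 0$ on the cylinder, where the weight $t + \sqrt{R} \ge 0$ throughout.

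Next I would multiply by a spatial cutoff $\eta$ for the ball of radius $R$ and study $\Phi := \eta G$. The uniform geometry (\ref{blowupbounds}) gives $\brs{\N \eta} + \brs{\N^2 \eta} \le C/R$, so the cutoff errors $(\gD \eta)G$ and $\IP{\N \eta, \N G}$ are controlled by $C(t + \sqrt{R})/R$, the gradient cross-term being absorbed into the good negative Hessian term still available. The point of the scale $\sqrt{R}$ is that this error integrates over the interval to a bound independent of $R$, since $\int_{-\sqrt{R}}^0 C(t+\sqrt{R})/R\, dt = C/2$. The maximum principle then yields $\sup_{t=0}\Phi \le \sup_{t=-\sqrt{R}}\Phi + C \le \sup f^2 + C$, and evaluating at the center, where $\eta = 1$, gives $\sqrt{R}\,\brs{\delb f}^2(0,0) \le C$ uniformly in $R$, forcing $\brs{\delb f}^2(0,0) = 0$.

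Finally, because the entire argument is translation-invariant (the uniform bounds hold at every point of $\mathbb{C}^2 \times (-\infty,0]$), I would repeat it with cutoffs centered at arbitrary points to conclude $\delb f \equiv 0$ identically. Then $f$ is pluriharmonic and real, hence constant on each time slice, and the evolution equation gives $\dt f = \gD f = 0$, so $f$ is constant in time as well. I expect the main obstacle to be the localization: on the noncompact $\mathbb{C}^2$ the maximum principle cannot be applied directly, and the delicate point is exactly the matching of the temporal weight $\sqrt{R}$ to the spatial radius $R$ so that the cutoff errors integrate to a constant independent of $R$ while the favorable $\brs{\bar{\N}\delb f}^2$ term remains available to absorb the gradient cross-terms; everything downstream is then a routine Liouville-type conclusion.
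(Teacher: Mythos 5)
Your proposal is correct and follows essentially the same route as the paper: the identical time-weighted test function $\eta\left[(t+\sqrt{R})\brs{\delb f}^2 + f^2\right]$, the maximum principle on $[-\sqrt{R},0]$ with the cutoff errors integrating to a constant independent of $R$, the evaluation at the center giving $\sqrt{R}\,\brs{\delb f}^2(0,0)\leq C$, and the translation-invariance argument to conclude $\delb f \equiv 0$. The only (harmless) difference is that you absorb the gradient cross-term $\IP{\N\eta,\N\brs{\delb f}^2}$ into the retained $-\brs{\bar\N\delb f}^2$ term, whereas the paper simply bounds all cutoff errors by $C\brs{t+\sqrt{R}}/R$.
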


\subsection{Rigidity of blowup limits}

In this subsection we establish a rigidity theorem for ancient solutions in the
case $n=2$.  One key input is a further identity special to the this dimension.

\begin{lemma} \label{n2traceev} When $n=2$ and $h_+$ is a flat metric, one has
\begin{align*}
\dt \tr_{h_+} g_+ =&\ \gD \tr_{h_+} g_+ - \IP{\del^+ \log \frac{\det g_+ \det
h_-}{\det h_+ \det g_-}, \delb^+ \log \frac{\det g_+ \det g_-}{\det h_+ \det
h_-}}_h.
\end{align*}
\begin{proof} Beginning with the result of Lemma \ref{partialmetricev}, we first
observe that since $h_+$ is flat the last term involving $\Omega^{h_+}$ will
vanish.  Using that $n=2$ we proceed to simplify the first order terms.  First
of all,
\begin{align*}
- \brs{\gU(g_+,h_+)}^2_{g,h} =&\ - h^{\bgb_+ \ga_+} g^{\bgg_+ \gd_+} g_{\mu_+
\bnu_+} \gG^{\mu_+}_{\ga_+ \gd_+} \gG_{\bgb_+ \bgg_+}^{\bnu_+} - h^{\bgb_+
\ga_+} g^{\bgg_- \gd_-} g_{\mu_+ \bnu_+} \gG^{\mu_+}_{\gd_- \ga_+}
\gG^{\bnu_+}_{\bgg_- \bgb_+}\\
=&\ - h^{\bga_+ \ga_+} g^{\bga_+ \ga_+} g^{\bga_+ \ga_+} g_{\ga_+ \bga_+,\ga_+}
g_{\ga_+ \bga_+,\bga_+} - h^{\bga_+ \ga_+} g^{\bga_- \ga_-} g^{\bga_+ \ga_+}
g_{\ga_+ \bga_+,\ga_-} g_{\ga_+ \bga_+,\bga_-}
\end{align*}
Also
\begin{align*}
\tr_{h_+} Q =&\ h^{\bgb_+ \ga_+} Q_{\ga_+ \bgb_+}\\
=&\ h^{\bga_+ \ga_+} \left( g^{\bga_- \ga_-} g^{\bga_+ \ga_+} T_{\ga_+ \ga_-
\bga_+} T_{\bga_+ \bga_- \ga_+} + g^{\bga_- \ga_-} g^{\bga_- \ga_-} T_{\ga_+
\ga_- \bga_-} T_{\bga_+ \bga_- \ga_-} \right)\\
=&\ h^{\bga_+ \ga_+} g^{\bga_- \ga_-} g^{\bga_+ \ga_+} g_{\ga_+ \bga_+,\ga_-}
g_{\ga_+ \bga_+,\bga_-} + h^{\bga_+ \ga_+} g^{\bga_- \ga_-}g^{\bga_- \ga_-}
g_{\ga_- \bga_-,\ga_+} g_{\ga_- \bga_-,\bga_+}.
\end{align*}
Combining the above two expressions yields the inner product claimed above.
\end{proof}
\end{lemma}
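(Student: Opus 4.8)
The plan is to start from the general evolution equation for $\tr_{h_+} g_+$ supplied by Lemma \ref{partialmetricev} and then use the two hypotheses — flatness of $h_+$ and $n=2$ — to collapse all of its first–order terms into the single pairing claimed. Since $h_+$ is flat, the curvature term $g^{\bj i}(h_+^{-1}g_+ h_+^{-1})^{\bgg_+\gd_+}(\Omega^{h_+})_{i\bj\gd_+\bgg_+}$ vanishes outright, and we may work in local coordinates in which $\gG^{h_+}=0$, so that $\gU(g_+,h_+)=\N^{g_+}$ and $\del^+\log\det h_+=0$. Lemma \ref{partialmetricev} then reduces to
\begin{align*}
\dt \tr_{h_+} g_+ - \gD \tr_{h_+} g_+ = - \brs{\gU(g_+,h_+)}^2_{g^{-1},h^{-1},g} + \tr_{h_+} Q,
\end{align*}
and the entire content of the lemma is the identification of this right–hand side.

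Next I would compute each surviving term explicitly, using that in dimension $n=2$ (the relevant rank–one case) $T_+M$ and $T_-M$ are line bundles, so that every $\pm$–trace is a single summand. Writing $\gU(g_+,h_+)^{\ga_+}_{i\ga_+}=\del_i\log\det g_+$, the norm $\brs{\gU(g_+,h_+)}^2_{g^{-1},h^{-1},g}$ splits, according to whether the base index is of $+$ or $-$ type, into a piece proportional to $\brs{\del_{\ga_+}\log\det g_+}^2$ and an off–diagonal piece $g^{\bga_-\ga_-}g^{\bga_+\ga_+}g_{\ga_+\bga_+,\ga_-}g_{\ga_+\bga_+,\bga_-}$. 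For $\tr_{h_+}Q$ I would record that the only nonvanishing torsion components entering $Q_{\ga_+\bga_+}$ are $T_{\ga_+\ga_-\bga_-}=g_{\ga_-\bga_-,\ga_+}$ and $T_{\ga_+\ga_-\bga_+}=-g_{\ga_+\bga_+,\ga_-}$, the latter produced by the antisymmetry of $T$ applied to (\ref{torsionform}); these yield one piece proportional to $\brs{\del_{\ga_+}\log\det g_-}^2$ and one piece equal to the same off–diagonal expression $g^{\bga_-\ga_-}g^{\bga_+\ga_+}g_{\ga_+\bga_+,\ga_-}g_{\ga_+\bga_+,\bga_-}$.

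The crucial observation — the step genuinely \emph{special to} $n=2$ — is that these two off–diagonal pieces occur with opposite signs in $-\brs{\gU}^2$ and in $\tr_{h_+}Q$, so they cancel identically, leaving
\begin{align*}
-\brs{\gU(g_+,h_+)}^2_{g^{-1},h^{-1},g} + \tr_{h_+} Q = h^{\bga_+\ga_+}\left( \brs{\del_{\ga_+}\log\det g_-}^2 - \brs{\del_{\ga_+}\log\det g_+}^2 \right).
\end{align*}
To finish I would recognize this as the stated pairing. With $L_1 = \log\frac{\det g_+\det h_-}{\det h_+\det g_-}$ and $L_2 = \log\frac{\det g_+\det g_-}{\det h_+\det h_-}$, flatness gives $\del^+\log\det h_+=0$, while $\del^+\log\det h_-=0$ once $h_-$ is taken to depend only on the $T_-$–factor in the adapted product coordinates of Lemma \ref{coordsplit}; hence $\del_{\ga_+}L_1 = \del_{\ga_+}\log\det g_+ - \del_{\ga_+}\log\det g_-$ and $\del_{\bga_+}L_2 = \del_{\bga_+}\log\det g_+ + \del_{\bga_+}\log\det g_-$. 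The product $(\del_{\ga_+}L_1)(\del_{\bga_+}L_2)$ then contributes exactly $\brs{\del_{\ga_+}\log\det g_+}^2 - \brs{\del_{\ga_+}\log\det g_-}^2$ plus a cross term which is purely imaginary; since the left–hand side $\dt\tr_{h_+}g_+ - \gD\tr_{h_+}g_+$ is real, this cross term is immaterial and matching real parts gives precisely $-\IP{\del^+ L_1,\delb^+ L_2}_h$.

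The main obstacle is the exact bookkeeping of the torsion contributions to $Q$: one must correctly carry both the $T_{\ga_+\ga_-\bga_-}$ component and the antisymmetry–generated $T_{\ga_+\ga_-\bga_+}$ component, for it is precisely the latter that cancels the off–diagonal part of $\brs{\gU(g_+,h_+)}^2$. This cancellation is what reduces three a priori independent quadratic connection/torsion terms to the two clean log–determinant gradient terms, and it is the reason the statement is confined to $n=2$; in higher rank the $\pm$–traces no longer collapse to single summands and the analogous cancellation no longer assembles into a scalar gradient pairing.
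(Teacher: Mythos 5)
Your proposal is correct and follows essentially the same route as the paper's own proof: start from Lemma \ref{partialmetricev}, use flatness of $h_+$ to discard the $\Omega^{h_+}$ term and work in coordinates with $\gG^{h_+}=0$, exploit the rank-one structure to compute $-\brs{\gU(g_+,h_+)}^2_{g^{-1},h^{-1},g}$ and $\tr_{h_+}Q$ explicitly, cancel the two mixed (off-diagonal) quadratic terms against each other, and identify what remains with the log-determinant pairing. If anything you are slightly more scrupulous than the paper, which silently drops the purely imaginary cross term and does not remark that the identification also uses $\del^+\log\det h_-=0$ (automatic in the flat product setting of Theorem \ref{rigiditythm}, where the lemma is applied).
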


\begin{thm} \label{rigiditythm} Let $g_t$ be a solution to pluriclosed flow on
$\mathbb C^2 \times (-\infty,0]$ satisfying (\ref{blowupbounds}).  Then $g_t
\equiv g_0$ is a flat metric.
\begin{proof} Since we are working on $\mathbb C^2$ against a flat background
metric, by Lemma \ref{fdotev} we see that $\frac{\del f}{\del t}$ satisfies the
time-dependent heat equation.  Moreover, using the uniform equivalence of $g$
with $g_{E}$ certainly $\brs{\frac{\del f}{\del t}} \leq C$.  It follows from
Proposition \ref{heatrigid} that $\frac{\del f}{\del t} \equiv c$.  Using this,
one 
observes from Lemma \ref{n2traceev} that $\tr_{h_{\pm}} g_{\pm}$ satisfy the
heat
equation, and are moreover bounded.  Thus by Proposition \ref{heatrigid} we
obtain that $\tr_{h_{\pm}} g_{\pm}$ are constant functions, and hence
$g$ is flat.
\end{proof}
\end{thm}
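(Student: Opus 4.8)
The plan is to run the caloric-rigidity result of Proposition~\ref{heatrigid} twice, bootstrapping from control of the time derivative of the potential up to control of the metric itself. Since we work on $\mathbb{C}^2$, I would take the background metrics $h_\pm$ to be flat (Euclidean), so that $\rho(h_\pm) = 0$ and hence $P(h_\pm) = 0$. Lemma~\ref{fdotev} then collapses to $\dt \dot f = \gD \dot f$; that is, $\dot f$ is an ancient solution of the time-dependent Chern-Laplacian heat equation. Because $\dot f = \log \frac{\det g_+ \det h_-}{\det h_+ \det g_-}$ and the uniform equivalence $A^{-1} g_E \leq g_t \leq A g_E$ bounds every determinant ratio from above and below, $\dot f$ is bounded. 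Proposition~\ref{heatrigid} then forces $\dot f \equiv c$ for a constant $c$.

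The second step is where the $n=2$ structure is essential. Against the flat background the evolution of $\tr_{h_+} g_+$ has, by Lemma~\ref{n2traceev}, a single first-order term, namely $- \IP{\del^+ \log \frac{\det g_+ \det h_-}{\det h_+ \det g_-},\ \delb^+ \log \frac{\det g_+ \det g_-}{\det h_+ \det h_-}}_h$. The key observation is that the first slot is exactly $\del^+ \dot f$, which vanishes once we know $\dot f \equiv c$. Thus $\tr_{h_+} g_+$, and symmetrically $\tr_{h_-} g_-$, satisfies the genuine heat equation $\dt \tr_{h_\pm} g_\pm = \gD \tr_{h_\pm} g_\pm$. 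These traces are bounded by the metric equivalence, so a second application of Proposition~\ref{heatrigid} shows they are constant in space and time.

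Finally I would convert constancy of the traces into flatness via the $\rank_{J_\pm} = 1$ hypothesis. In that case $T_\pm \mathbb{C}^2$ are line bundles, so against the flat $h_\pm$ the scalars $\tr_{h_\pm} g_\pm$ coincide with the single surviving metric components $g_{\ga_\pm \bgb_\pm}$; their constancy forces these components to be constant, whence the torsion formula (\ref{torsionform}) gives $T \equiv 0$ and the Chern connection has constant coefficients. A Hermitian metric with constant coefficients in holomorphic coordinates is flat, and constancy in time gives $g_t \equiv g_0$.

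The main obstacle is the passage through the second step: in general the evolution of $\tr_h g$ carries a quadratic gradient nonlinearity that blocks any maximum-principle rigidity, and it is precisely the dimension-specific identity of Lemma~\ref{n2traceev}---rewriting that nonlinearity as an inner product whose first factor is $\del^+ \dot f$---that lets the first rigidity conclusion feed the second. Recognizing that this factor is $\del^+$ of the already-controlled quantity $\dot f$, rather than some uncontrolled curvature combination, is the crux of the argument.
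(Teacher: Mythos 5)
Your proposal is correct and follows essentially the same route as the paper: take flat backgrounds $h_\pm$ so that Lemma \ref{fdotev} makes $\dot f$ a bounded ancient solution of the heat equation, apply Proposition \ref{heatrigid} to get $\dot f \equiv c$, observe that the first-order term in Lemma \ref{n2traceev} is an inner product against $\del^+ \dot f$ and hence vanishes, and then apply Proposition \ref{heatrigid} a second time to the bounded traces $\tr_{h_\pm} g_\pm$ to conclude flatness. Your closing paragraph merely spells out, via the rank-one splitting and the torsion formula (\ref{torsionform}), the final implication ``constant traces $\Rightarrow$ flat'' which the paper leaves implicit.
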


\subsection{Proof of Theorem \ref{EKregularity}}

\begin{proof}[Proof of Theorem \ref{EKregularity}] Let $\gU = \N^C_{g_t} -
\N^C_{g_0}$.  First we show an estimate for $\gU$.  Assuming that $\brs{\gU}^2$
blows up at time $\tau$, by Proposition \ref{blowupcons} we obtain a nonflat
blowup solution to pluriclosed flow on $(-\infty,0] \times \mathbb C^2$
satisfying (\ref{blowupbounds}).  However, by Theorem \ref{rigiditythm} this
solution is indeed flat, which is a contradiction.  A similar blowup argument
can be used to establish all of the higher $C^k$ estimates as well.
\end{proof}

\section{Proof of Theorem \ref{mainthm2}} \label{mainsec}

In this section we establish Theorem \ref{mainthm2}.  Our proof consists of
putting together the a priori estimates of \S \ref{estimates} to prove the main
analytic result, Theorem \ref{halfample}.  Then we verify case by case that the
manifolds stated in Theorem \ref{mainthm2} satisfy the hypotheses of Theorem
\ref{halfample}, finishing the proof of long time existence.  We end with some
remarks on the convergence behavior.

\begin{thm} \label{halfample} Let $(M^4, g, J_{\pm})$ be a compact generalized
K\"ahler surface
satisfying $[J_+,J_-] = 0$.  Suppose further that $c_1(T^{\mathbb C}_{+}) \leq
0$ or $c_1(T^{\mathbb C}_-) \leq 0$.  Then Conjecture
\ref{coneconj} holds for $(M^4, g_{\pm})$.  In particular, given $g_0$ a
generalized K\"ahler metric on $M$, and defining $\tau^*$ via
(\ref{taustardef}), the solution to pluriclosed flow with initial condition
$g_0$ exists smoothly on $[0,\tau^*)$.
 \begin{proof} Suppose without loss of generality that $c_1(T^{\mathbb C}_+ M)
\leq 0$.  Using the transgression formula for the first Chern class, we can
choose a metric $h_+$ on
$T_+^{\mathbb C} M$ such that $\rho(h_+) \leq 0$.  Fix a time $\tau < \tau^*$
and consider the setup as in \S \ref{estimates}.  Since $\rho(h_+) \leq 0$, we
can directly apply the maximum principle to the evolution equation of Lemma
\ref{halfdetev} to conclude an a priori lower bound for $\det g_+$, which of
course is just a lower bound for $g_+$ since $\rank T_+^{\mathbb C} M = 1$. 
Next, from Proposition \ref{fdotest} we obtain a uniform estimate on the ratio
$\frac{\det g_+ \det h_-}{\det h_+ \det g_-}$, which since $g_+$ is bounded
below implies a uniform lower bound for $\det g_-$, which again implies that
$g_-$ is bounded below since $\rank T_-^{\mathbb C} M = 1$.  From Proposition
\ref{mub} we thus conclude uniform upper and lower bounds for $g_t$.  Applying
Theorem \ref{EKregularity} we conclude uniform $C^{\infty}$ estimates for $g_t$
on $[0,\tau)$.  The theorem follows.
\end{proof}
\end{thm}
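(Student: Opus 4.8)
The plan is to assemble the a priori estimates of \S\ref{estimates} and \S\ref{harnack} into uniform two-sided bounds on $g_t$, and then invoke Theorem \ref{EKregularity}. I would fix $\tau < \tau^*$ and work with the background data selected in \S\ref{estsetup}. Assume without loss of generality that $c_1(T^{\mathbb C}_+) \leq 0$; then the transgression formula (\ref{trans}) lets me choose the background metric $h_+$ on $T^{\mathbb C}_+$ so that $\rho(h_+) \leq 0$ pointwise. With this sign in hand, Lemma \ref{halfdetev} gives
\begin{align*}
\left(\dt - \gD\right) \log \frac{\det g_+}{\det h_+} = \tfrac{1}{2}\brs{T}^2 - \tr_g \rho(h_+) \geq 0,
\end{align*}
so the minimum principle would yield a uniform lower bound for $\log(\det g_+/\det h_+)$, hence for $\det g_+$. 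Since $\rank_{J_{\pm}} = 1$, the bundle $T^{\mathbb C}_+$ is a line bundle and $\det g_+$ is the single metric coefficient, so this is a uniform lower bound $g_+ \geq c\,h_+$.

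The next step transfers this control to the complementary bundle. Proposition \ref{fdotest}, which is precisely where the hypotheses $n=2$ and rank $1$ enter, bounds $\brs{\dot f}$ uniformly; since $\dot f = \log \frac{\det g_+ \det h_-}{\det h_+ \det g_-}$, this gives a two-sided bound on the ratio $\det g_+/\det g_-$. Combined with the lower bound on $\det g_+$, I obtain a lower bound on $\det g_-$, and by the same rank-one reasoning a uniform lower bound $g_- \geq c\,h_-$. In view of the orthogonal splitting $g = g_+ \oplus g_-$ from (\ref{metricsplit}), this furnishes a uniform lower bound $g_t \geq \tfrac{1}{K} h$. Feeding this into Proposition \ref{mub} produces the matching upper bound $\tr_h g \leq C$, so that $g_t$ is uniformly equivalent to the fixed background on $[0,\tau)$. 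Theorem \ref{EKregularity} then upgrades this to uniform $C^\infty$ estimates; since the resulting constants depend only on $\tau$ and the fixed background data, the flow extends smoothly up to $\tau$, and as $\tau < \tau^*$ was arbitrary the solution exists smoothly on $[0,\tau^*)$.

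I expect the main obstacle to be obtaining the lower bound for the \emph{full} metric $g$ rather than for the individual pieces. The curvature hypothesis $c_1(T^{\mathbb C}_+) \leq 0$ only directly controls $g_+$ through Lemma \ref{halfdetev}, while Proposition \ref{mub} only upgrades a lower bound to an upper bound; neither controls $g_-$ on its own. The mechanism that closes the loop is the uniform bound on $\dot f$ from Proposition \ref{fdotest}, which couples the two determinants and is available only because of the special torsion structure in dimension two. The deeper analytic content---the nonconvex Evans--Krylov estimate behind Theorem \ref{EKregularity} and the torsion-potential argument behind Proposition \ref{mub}---has already been isolated in the preceding sections, so the work here reduces to the correct bookkeeping of curvature signs and the order in which the estimates are applied.
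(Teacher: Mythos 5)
Your proposal is correct and follows essentially the same route as the paper's own proof: choose $h_+$ with $\rho(h_+) \leq 0$ via transgression, apply the maximum principle to Lemma \ref{halfdetev} for the lower bound on $g_+$, transfer it to $g_-$ via Proposition \ref{fdotest}, obtain the upper bound from Proposition \ref{mub}, and conclude with Theorem \ref{EKregularity}. The only difference is expository: you spell out the minimum-principle inequality and the rank-one bookkeeping slightly more explicitly than the paper does.
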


\begin{proof}[Proof of Theorem \ref{mainthm2}] Let us treat the cases of long
time existence in turn.  As remarked upon in the introduction, generalized
K\"ahler surfaces with commuting complex
structure with rank $1$ were classified in \cite{GaultApost}, building on work
of Beauville \cite{Bville}.  We consider the different parts of this
classification case by case.

\noindent \textbf{Ruled surfaces:}  As shown in (\cite{GaultApost,Bville}), in
this case $(M, J_+)$ is the projectivization of a projectively flat holomorphic
plane bundle over a compact Riemann surface, which we have supposed to be of
genus $g \geq 1$.  As computed in (\cite{Bville} (4.2)), for our (indeed ANY)
splitting of $T^{\mathbb C} M$, one has that the universal cover is a product
with fundamental group acting diagonally, one factor of which must be
biholomorphic to $\mathbb H$.  It follows that either
$c_1(T^{\mathbb C}_{\pm} M) \leq 0$, and so Theorem \ref{halfample} yields the
existence on $[0,\tau^*)$.

\noindent \textbf{Bi-elliptic surfaces:} Arguing as in the case of ruled
surfaces, we know that the universal cover in this setting is biholomorphic to
$\mathbb C \times \mathbb C$.  In particular, both factors satisfy
$c_1(T_{\pm}^{\mathbb C} M) = 0$, and Theorem
\ref{halfample} guarantees the long time existence.

\noindent \textbf{Elliptic fibrations:} As explained in the proof of
(\cite{GaultApost} Theorem 1), the universal cover of $M$ is $\mathbb C \times
\mathbb H$, with the fundamental group acting diagonally by isometries of the
canonical product metric.  Moreover, the splitting into $T_{\pm}^{\mathbb C} M$
corresponds to the obvious splitting of the tangent bundle of $\mathbb C \times
\mathbb H$, and so depending on the orientation, either $c_1(T_{\pm}^{\mathbb
C}) < 0$, and so Theorem \ref{halfample} guarantees the long time existence.

\noindent \textbf{General type:} As shown in (\cite{GaultApost}), the universal
cover of these surfaces is $\mathbb H \times \mathbb H$, with the fundamental
group acting diagonally by biholomorphisms of $\mathbb H$ on each factor. 
Moreover, the splitting of $T^{\mathbb C}M$ corresponds to the natural product
structure of $\mathbb H \times \mathbb H$.  The canonical metric on $\mathbb H$
is invariant under the full biholomorphism group, and so in particular the
canonical product metric descends to $M$, showing that $c_1(T^{\mathbb C}_{\pm}
M) < 0$.  Thus Theorem \ref{halfample} guarantees the long time existence.

\noindent \textbf{Inoue surfaces:} To begin we briefly recall the construction
of Inoue surfaces \cite{Inoue}.  Fix
a matrix $M \in SL(3,\mathbb Z)$ with one real eigenvalue $\ga > 0$ and two
eigenvalues $\gb \neq \bar{\gb}$.  We fix $(a_1,a_2,a_3)$ a real eigenvector for
eigenvalue $\ga$ and $(b_1,b_2,b_3)$ an eigenvector for the eigenvalue $\gb$. 
Let $\mathbb H = \{z |\ \mbox{Im} z > 0 \}$ denote the upper half space, and let
$\gG$
be the group of automorphisms of $\mathbb H \times \mathbb C$ generated by
\begin{align*}
f_0(z,w) = (\ga z, \gb w), \qquad f_j(z,w) = (z + a_j,w + b_j),\quad  1 \leq j
\leq 3.
\end{align*}
The manifold $S_M = \left( \mathbb H \times \mathbb C \right) / \gG$ was
discovered in \cite{Inoue}, and is called an Inoue surface.  This is the
simplest of three classes defined by Inoue, and the only one admitting
generalized K\"ahler structure with commuting complex structures.  In this case
the generalized complex structures come from the corresponding splitting
$\mathbb H \times \mathbb C$.  The two complex structures correspond to choosing
$J_{\pm} = J_{\mathbb H} \oplus (\pm J_{\mathbb C})$.  On these Inoue surfaces
there is a simple pluriclosed metric constructed by Tricerri \cite{Tricerri},
\begin{align} \label{tric}
\gw = \frac{\i}{y^2} dz \wedge d \bar{z} + \i y dw \wedge d \bar{w}.
\end{align}
It is easily checked that in fact this metric is generalized K\"ahler. 
Moreover, we have
\begin{align*}
\rho(g_+) =&\ - \i \del \delb \log y = \frac{1}{4 y^2} dz \wedge d \bar{z},\\
\rho(g_-) =&\ - \i \del \delb \log y^{-2} = - \frac{\i}{2 y^2} dz \wedge d
\bar{z}.
\end{align*}
In particular, $c_1(T_-^{\mathbb C} M) \leq 0$.  Thus we have verified the
hypotheses of Theorem \ref{halfample}, and the long time existence claim in this
case follows.
\end{proof}

\begin{rmk} In the case of bi-elliptic surfaces or surfaces of general type, one
can show convergence of the flow with arbitrary initial data to a
K\"ahler-Einstein metric.  In fact these results hold in a much broader context,
for arbitrary dimension and for solutions to the general pluriclosed flow not
just in the generalized K\"ahler context.  These results will appear in a later
work \cite{SFR}.
\end{rmk}

\begin{rmk} In \cite{Jess} Boling studied homogeneous solutions to the
pluriclosed flow on Inoue surfaces, and found that these always exist for all
time, and after appropriate rescaling converge  in the Gromov-Hausdorff topology
to circles.  It is reasonable to expect that this always happens, and one
approach to answering this would be to establish that the solutions are ``Type
III'' with bounded diameter (after rescaling), and then exploit properties of
the expanding entropy functional (\cite{ST2}) to exhibit the required
convergence, in analogy with the classification of bounded diameter type III
solutions of Ricci flow on three-manifolds given by Lott \cite{Lott}.
\end{rmk}

\bibliographystyle{hamsplain}

\end{document}